\numberwithin{equation}{section}
\newcommand{\n}[1]{\boldsymbol{#1}}
\newcommand{\C}{ \Upsilon}
\newcommand{\TG}{ T_\Gamma}
\newcommand{\Eh}{\mathcal{E}^h}
\newcommand{\Th}{\mathcal{T}^h}
\newcommand{\EhG}{\mathcal{E}_h^\Gamma}
\newcommand{\ThG}{\mathcal{T}_h^\Gamma}
\newcommand{\EhGn}{\mathcal{E}_h \backslash \mathcal{E}_h^\Gamma}
\newcommand{\wT}{\omega_T}
\newcommand{\wTG}{\omega_{T}^{\Gamma}}
\newcommand{\revblue}[1]{\textcolor{black}{{#1}}}
\newcommand{\ra}[1]{\renewcommand{\arraystretch}{#1}}
\newcommand{\Ce}{C_E}
\newcommand{\Cr}{C_{\text{reg}}}
\theoremstyle{plain}
\newtheorem{theorem}{\bf Theorem}
\newtheorem{lemma}{Lemma}
\newtheorem{proposition}{Proposition}
\newtheorem{corollary}{Corollary}
\newtheorem*{Assmptn}{Assumption}
\theoremstyle{remark}
\theoremstyle{definition}
\newtheorem{Def}{Definition}[section]
\begin{document}
\title[FEM for interface]{A finite element method for  high-contrast interface problems with error estimates independent of contrast}

\author{Johnny Guzm\'an\textsuperscript{1}}
\thanks{\textsuperscript{1} Supported by NSF- DMS 1318108}
\address{\textsuperscript{1} Division of Applied Mathematics, Brown University, Providence, RI 02912, USA}
\email{johnny\_guzman@brown.edu}
\author{Manuel A. S\'anchez\textsuperscript{2}}
\address{\textsuperscript{2} School of Mathematics, University of Minnesota, Minneapolis, Minnesota 55455, USA}
\email{sanchez@umn.edu}
\author{Marcus Sarkis\textsuperscript{3}}
\thanks{\textsuperscript{3} Supported in part by NSF-MRI 1337943 and NSF-MPS 1522663}
\address{\textsuperscript{3}Department of Mathematical Sciences at Worcester Polytechnic Institute, 100 Institute Road, Worcester, MA 01609, USA }
\email{msarkis@wpi.edu}

\keywords{Interface problems, high-contrast, finite elements.}

\subjclass[2000]{65N30, 65N15.}
\date{}

\begin{abstract}
We define a new finite element method for a steady state  elliptic problem with discontinuous diffusion coefficients where the meshes are not aligned with the interface. We prove optimal error estimates in the $L^2$ norm and $H^1$ weighted semi-norm independent of the contrast between the coefficients. Numerical experiments validating our theoretical findings are provided.
\noindent
\end{abstract}

\maketitle

\section{Introduction}\label{section1}

In this article we develop a finite element method for a steady state interface problem. We pay particular attention to high-contrast problems, proving optimal error estimates independent of the contrast of the discontinuous constant coefficients for the numerical method.

Let $\Omega\subset \mathbb{R}^2$ be a polygonal domain with an  immersed interface $\Gamma$ such that $\overline{\Omega}=\overline{\Omega}^{-} \cup \,\overline{\Omega}^{+}$ with $\Omega^-\cap \Omega^+=\emptyset$, and $\Gamma=\overline{\Omega}^{-} \cap \,\overline{\Omega}^{+}$. We assume that $\Gamma$ does not intersect $\partial \Omega$, enclosing either $\Omega^-$ or $\Omega^+$.  Our numerical method will approximate a solution of the problem below.
\begin{subequations}\label{Problem}
\begin{alignat}{3}
    -\rho^{\pm} \Delta u^{\pm} &= f^{\pm} \qquad &\mbox{in } &\Omega^{\pm}, \label{Problem:a} \\
                                           u &=0            &\mbox{on }&\partial \Omega, \label{Problem:b}\\
                            \left[u\right]&=0            &\mbox{on }&\Gamma, \label{Problem:c} \\
  \left[\rho D_{\n{n}}  u  \right]&=0            &\mbox{on }&\Gamma. \label{Problem:d}
\end{alignat}
\end{subequations}
The jumps across the interface $\Gamma$ are defined as
\begin{align*}
\left[\rho D_{\n{n}} u  \right] & \,\,=\,\,\rho^- D_{\n{n}^{-}}  u^{-} +\rho^+D_{\n{n}^{+}} u^{+}  = \rho^-\nabla u^{-} \cdot \n{n}^{-}+ \rho^+\nabla u^{+} \cdot \n{n}^+, \quad \left[u\right]\,\,=\,\,u^{+}-u^{-},
\end{align*}
where $u^{\pm}\equiv u|_{\Omega^{\pm}}$ and $\n{n}^{\pm}$ is the unit outward normal to $\Omega^{\pm}$. We furthermore assume that $\rho^{+}\ge \rho^{-}>0$ are constants and that the interface $\Gamma$ is a {closed, simple and regular $\mathcal{C}^2$ curve with an arc-length parameterization $\n{X}$}.

There has been a recent surge in the development of finite element methods for interface problems. See for instance \cite{MR3338673, MR3047906, MR2571349, MR3051411, MR3264337, MR2820966, MR2738930, MR2192480, MR1941489, MR2864671, MR2377272, MR3268662,MR2981355, MR3248049, MR3218337, MR2684351}, to name a few. {Among the articles where the discretization is based on meshes not aligned with the interface}, most
of the methods focus on low contrast problems and only a few address the high contrast problems ($\rho^{+}/\rho^{-} \gg 1$).  For example, Burman et al. \cite{MR3051411} introduced an {unfitted Nitsche's method with averages and stabilization techniques for arbitrarily high-contrast problems, presenting bounds for the condition number of the stiffness matrix, although a rigorous error} analysis was not given in that paper. {Another example, is given by Chu et al. \cite{MR2684351} that uses multiscale techniques to build basis functions, an approach that seems well suited for high curvature problems (e.g. inclusions completely contained in a triangle).  However, in regions where the curvature of the interface is small it appears that their main a priori estimate degenerates (see Theorem 3.9 in \cite{MR2684351}), forcing them to refine the mesh on those regions in order to be aligned with the interface.  In our approach we do not need mesh refinements to make the triangulations aligned with the interface, however, we do not address high curvature problems.}

In order to put our contribution in context, let us explain two popular finite element approaches for problem \eqref{Problem}. The first approach is to double the degrees of freedom on triangles that intersect the interface and then add penalty terms to weakly enforce the continuity across the interface, see for example   \cite{MR3051411}. {Burman et al.} \cite{MR3051411}  demonstrated that in addition to penalizing the jumps across $\Gamma$ it is necessary to add a flux stabilization term. This method is the so-called stabilized unfitted Nitsche's method. The stabilization term  penalizes the jumps of the gradient on edges that belong to triangles that intersect the interface. As {Burman et al.} \cite{MR3051411} showed, in order to obtain a method that is robust with respect to diffusion contrast and robust with respect to the way $\Gamma$ cuts triangles, this type of penalization is necessary. The second common approach, and the one we focus in this paper, is to define local piecewise polynomial finite element spaces on triangles that intersect the interface $\Gamma$ (see for instance \cite{MR3218337,MR2681555, MR2377272, MR2684351, MR2864671, MR2740492}).
The basis functions are constructed by having them satisfy the continuity of the solution and the continuity of the flux strongly across $\Gamma$. Unlike the
unfitted Nitsche's method {that weakly imposes the interface conditions, in this approach the flux conservation
and the continuity of the solution are enforced strongly, for example at certain points on $\Gamma$, without requiring
stabilization terms on $\Gamma$. This is an important and distinguishable feature of the Immersed Interface Method using a Finite Element formulation (Immersed Finite Element Methods, see \cite{MR2018791})}. These basis functions are defined locally on each triangle, {and therefore} they are naturally discontinuous across edges of the triangulation. Namely, Adjerid et al. in \cite{MR3218337}  proposed to penalize jumps of the trial functions across the edges.  Similarly, {Lin et al.} in \cite{MR3338673} added similar penalty terms and proved optimal error estimates. However, in {their analysis they do not consider high-contrast problems}.

In this paper we follow this approach, defining local basis functions that are piecewise polynomials on each side of triangles that are cut by $\Gamma$. However, an additional stabilization term is added as compared to the methods of Adjerid et al. \cite{MR3218337} and {Lin et al.} \cite{MR3338673}, allowing us to prove {error} estimates that are independent of the contrast $\rho^{+}/\rho^{-}$. By stabilization term we refer to a penalization of the jumps of the normal derivatives of the approximation across the edges that belong to triangles intersecting $\Gamma$. This idea was used before by {Burman et al.} \cite{MR3051411}, however here we use different stabilization parameters (and of course different basis functions).  Roughly speaking, the reason this flux
stabilization is important for high contrast problems, is that one does not want to move estimates from $\Omega^+$ to $\Omega^-$ because $\rho^+$ could be much larger than $\rho^-$. However, triangles that are cut by $\Gamma$ might have a thin part in  $\Omega^+$ and therefore inverse estimates might be affected. By adding the jumps of derivatives we can transfer the estimates to a neighboring triangle that will have a larger portion in $\Omega^+$.

{In order to prove error estimates independent of the contrast $\rho^+/\rho^-$ we assume $H^2$ regularity of $u$ on both $\Omega^+$ and $\Omega^-$}. To be more precise, the error estimate presented in this paper for the energy norm (\revblue{see \eqref{defV}}) is
\begin{align}\label{boundV}
 \|u-u_h\|_V \,\, \le& \,\,C\, {h} \left\{ \sqrt{\rho^-} (\|D u\|_{L^2(\Omega^-)}+\|D^2 u\|_{L^2(\Omega^-)}) \right. \\
& \,\, \left. \quad\,+ \sqrt{\rho^+} (\|D u\|_{L^2(\Omega^+)} +\|D^2u\|_{L^2(\Omega^+)})  \right\}. \nonumber
\end{align}
{Consequently,} assuming that $\Omega$ is convex and using regularity estimates (see \cite{MR2684351}) we will have the result
\begin{equation} \label{boundV2}
\|u-u_h\|_V \,\, \le \,\,C\, \frac{h}{\sqrt{\rho^-}} \|f\|_{L^2(\Omega)}.
\end{equation}
{In addition, using a duality argument we prove the estimate} $\|u - u_h\|_{L^2(\Omega)} \leq C (h^2/\rho^-) \|f\|_{L^2(\Omega)}$. The constants in the estimates depend on the geometry, including the curvature of $\Gamma$.

The outline of the paper is as follows. We formulate the discrete
problem as finding $u_h \in V_h$ such that $a_h(u_h,v_h) = (f,v_h), \, \forall
v_h \in V_h$. The discrete space $V_h$ is introduced in {S}ection \ref{SpaceVh}
and the bilinear form $a_h(\cdot,\cdot)$ in Section \ref{FEMsub}. In {S}ection
\ref{localappro}, fundamental results on element-wise weighted $L^2$ and $H^1$ norm
approximation for the space $V_h$ are established. {Coercivity and continuity of the bilinear form  $a_h$ are studied in Section \ref{coercivity} and Section \ref{continuitybin}, respectively. In particular for the continuity, we note that the use of an augmented norm is necessary for the analysis due to the presence of the penalty terms involving flux jumps.}
In Section \ref{apriori} the bound (\ref{boundV}) is established by estimating the
approximation error and the consistency error across $\Gamma$ and across
elements near $\Gamma$. {The error estimate in the $L^2$ norm is also proved in this section. {S}ection \ref{extensionsrelated} is devoted to present extensions of the method to three dimensions, discuss related methods}, {and state some concluding remarks}. Finally, in Section  \ref{Numericssection} we provide numerical experiments {corroborating our theoretical findings}. {An Appendix containing technical proofs and a computational consideration is also included}.

\vskip2mm

\section{The finite element method}\label{sectionfem}
\subsection{Notation and local finite element space} \label{SpaceVh}
In this section we present a finite element method for problem (\ref{Problem}) using piecewise linear polynomials.

We next develop notation.  Let $\mathcal{T}_h$, $0 < h <1$ be {an admissible family of triangulations of $\Omega$ (conforming)}, with $ \overline{\Omega} = \cup_{T\in \mathcal{T}_h} \overline{T}$ and the elements $T$ are mutually disjoint. Let $ h_T$ denote the diameter of the element $T$ and $h = \max_{T} h_T$. We let $\Eh$ be the set of all edges of the triangulation.  We  adopt the convention that edges $e$, elements $T$, sub-edges $e^\pm\,:=\,e\cap \Omega^\pm$, sub-elements $T^\pm\,:=\, T\cap \Omega^\pm$ and sub-regions $\Omega^\pm$ are open sets, and we use the over-line symbol to refer to their closure.

Let $\mathcal{T}_h^{\Gamma}$ denotes the set of triangles $T \in \mathcal{T}_h$ such that ${T}$ intersects $\Gamma$. We let $\EhG$ be the set of all the three
edges of triangles in $\mathcal{T}_h^\Gamma$.  {Since $\Gamma$ is $\mathcal{C}^2$ we have that $\|\n{X}''\|_{L^\infty} < \infty $ and we define the maximum curvature $\kappa:=\|\n{X}''\|_{L^\infty} $. Our analysis, will be valid when $h$ is sufficiently small.  To make this precise,  we will use the concept of a tubular neighborhood whose existence is a standard result in   differential geometry; see \cite{MR0394451} Section 2-7, Proposition 3.}

{
\begin{lemma}\label{tubularlemma}{(Existence of $r$-tubular neighborhood)}
Let $\Gamma$ be a regular, simple, $\mathcal{C}^2$  curve. For every $x\in \Gamma$ consider the line segment $N_{x}(r)$ of length $2r$ centered at $x$ and perpendicular to $\Gamma$ at $x$. Define the tubular neighborhood of radius $r$ of $\Gamma$ by $Tub(r) = \cup _{x\in\Gamma} N_{x}(r)$.  Then, there exists $r>0$ such that for any two points  $x,y\in \Gamma,\,x\neq y$, the line segments $N_{x}(r)$ and $N_{y}(r)$ are disjoint.
\end{lemma}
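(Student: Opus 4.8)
The plan is to realize the tubular neighborhood as the image of the normal map of $\Gamma$ and then combine the inverse function theorem with the compactness of $\Gamma$. Since $\Gamma$ is a closed, simple, regular $\mathcal{C}^2$ curve, it is the continuous injective image of a circle, hence compact, and (as already assumed in the excerpt) it carries an arc-length parameterization $\n{X}\colon \mathbb{R}/L\mathbb{Z}\to\mathbb{R}^2$ of class $\mathcal{C}^2$, where $L$ is the length of $\Gamma$; in particular $\n{X}'(s)$ is a unit tangent vector. Let $\n{n}(s)$ be the unit normal obtained by rotating $\n{X}'(s)$ by $\pi/2$; because $\n{X}\in\mathcal{C}^2$ we have $\n{n}\in\mathcal{C}^1$. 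Define the normal map $F\colon (\mathbb{R}/L\mathbb{Z})\times\mathbb{R}\to\mathbb{R}^2$ by $F(s,t)=\n{X}(s)+t\,\n{n}(s)$, so that $F(s,\cdot)$ parameterizes the line through $\n{X}(s)$ perpendicular to $\Gamma$ and $N_{\n{X}(s)}(r)=F(\{s\}\times(-r,r))$. The conclusion of the lemma — that $N_x(r)\cap N_y(r)=\emptyset$ whenever $x\neq y$ — is precisely the statement that $F$ restricted to $(\mathbb{R}/L\mathbb{Z})\times(-r,r)$ never identifies two points with distinct first coordinates (equivalence of $x\neq y$ and $s\neq s'$ uses simplicity of $\Gamma$).

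First I would verify that $F$ is a local diffeomorphism along the zero section. The map $F$ is $\mathcal{C}^1$, and its differential at $(s,0)$ has columns $\partial_s F(s,0)=\n{X}'(s)$ and $\partial_t F(s,0)=\n{n}(s)$, which form an orthonormal basis of $\mathbb{R}^2$; hence $|\det DF(s,0)|=1\neq 0$. By the inverse function theorem, for every $s$ there is an open neighborhood $U_s$ of $(s,0)$ in the cylinder on which $F$ is a $\mathcal{C}^1$ diffeomorphism onto its image, so in particular $F|_{U_s}$ is injective.

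Then I would argue by contradiction using compactness. Suppose no $r>0$ works: then for every $n\in\mathbb{N}$ there are $x_n,y_n\in\Gamma$ with $x_n\neq y_n$ and a point in $N_{x_n}(1/n)\cap N_{y_n}(1/n)$; writing $x_n=\n{X}(s_n)$, $y_n=\n{X}(s_n')$ this gives $s_n\neq s_n'$ and $t_n,t_n'\in(-1/n,1/n)$ with $F(s_n,t_n)=F(s_n',t_n')$. By compactness of $\mathbb{R}/L\mathbb{Z}$, after passing to a subsequence we have $s_n\to s_*$ and $s_n'\to s_*'$; since $t_n,t_n'\to 0$ and $F$ is continuous, $\n{X}(s_*)=\n{X}(s_*')$, and simplicity of $\Gamma$ (injectivity of $\n{X}$) forces $s_*=s_*'$. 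But then for $n$ large both $(s_n,t_n)$ and $(s_n',t_n')$ lie in the neighborhood $U_{s_*}$ on which $F$ is injective, contradicting $F(s_n,t_n)=F(s_n',t_n')$ together with $s_n\neq s_n'$. Hence some $r>0$ has the desired property, which is the claim.

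The bookkeeping parts — computing $DF$ and translating segment disjointness into injectivity of $F$ — are routine. The one genuinely delicate point is the compactness step: one must ensure that the limiting first coordinates actually coincide, which is where simplicity of $\Gamma$ enters (mere regularity is not enough, as a figure-eight-type self-intersection would violate the statement), and one must apply the local-injectivity neighborhood from the inverse function theorem at the correct limit point $(s_*,0)$. Note that no uniform lower bound on the size of the neighborhoods $U_s$ is needed, since the contradiction is extracted at a single point.
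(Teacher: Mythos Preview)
Your argument is correct and is essentially the standard proof of the tubular neighborhood theorem for compact embedded curves: realize the tube as the image of the normal map $F(s,t)=\n{X}(s)+t\,\n{n}(s)$, use the inverse function theorem along $t=0$, and then extract a contradiction via a compactness/diagonal argument. The only point worth flagging is that you tacitly use closedness (hence compactness) of $\Gamma$ when passing to a convergent subsequence in $\mathbb{R}/L\mathbb{Z}$; this is legitimate here because the paper explicitly assumes $\Gamma$ is closed in Section~\ref{section1}, but it is worth stating, since the lemma as written only says ``regular, simple, $\mathcal{C}^2$'' and the result can fail for non-compact curves.

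By contrast, the paper does not prove this lemma at all: it simply invokes it as a standard result in differential geometry and cites do~Carmo (Section~2--7, Proposition~3). Your proof is, in fact, the argument one finds there, so you have filled in what the paper left as a reference.
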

}

From now on we will work under the following assumptions.
\begin{Assmptn}\label{Assumption}
{Given Lemma \ref{tubularlemma} we make the following assumptions {on the triangulation}:
\begin{enumerate}
\item We assume that the triangulation is shape-regular, see \cite{MR1278258}.
\item We assume $h< {r}/{2}$ where $r$ is the radius of the tubular neighborhood of $\Gamma$.
\item The interface intersects the boundary of an element at most twice and at different edges.
\end{enumerate}}
\end{Assmptn}
{It is well known that $r \le {1}/{\kappa}$, and hence by our assumption $h < {1}/{(2\kappa)}$.  The radius $r$ also bounds from below how close the curve $\Gamma$ comes from self-intersecting (e.g. consider a dumbbell with a thin middle section). We make use of these assumptions to prove some of the technical lemmas (see Lemmas \ref{twotri}, \ref{compare}, \ref{TGammalessLT}).  }

 In addition, we define an element patch $\omega_{T}$ of a triangle $T$,  its restriction to  $\Omega^\pm$ and its intersection with $\Gamma$ by
\begin{equation*}
\wT\,:=\,\mathrm{Int}\big\{\bigcup_{K\in \mathcal{T}_h} \overline{K}\,:\, \overline{K}\cap\overline{T}\,\neq\,\emptyset \big\},\quad  \wT^{\pm}\,:=\,\wT\cap \Omega^\pm, \quad \wTG\,:=\,\wT\cap \Gamma,
\end{equation*}
\revblue{where ``$\mathrm{Int}$'' denotes the interior of the set.}

The introduction of notation for the patch will be relevant in the proof of the interpolation error further forward. We first need to build our local finite element space (on each element).
To this end, {we let $x_0$ be the midpoint with respect to the arc-length on the curve segment}
$\TG:=T \cap \Gamma$. We note that the midpoint choice is a preference of the authors, the proofs below hold for any $x_0 \in \TG$.
Let $L_T$  be the line segment inside $T$ which is tangent to $\Gamma$
at $x_0$. {We define by $\n{t}^\pm$  the unit tangent vector to $\Gamma$ by a $90^{\circ}$
clockwise rotation of $\n{n}^{\pm}$}. Figure \ref{figure1} illustrates
the definitions and notations  introduced above.
\begin{SCfigure}[][!htbp]
\caption{Illustration of our notation on an element  $T\in \mathcal{T}_h^\Gamma$.\vskip4mm}
\includegraphics[scale=.3]{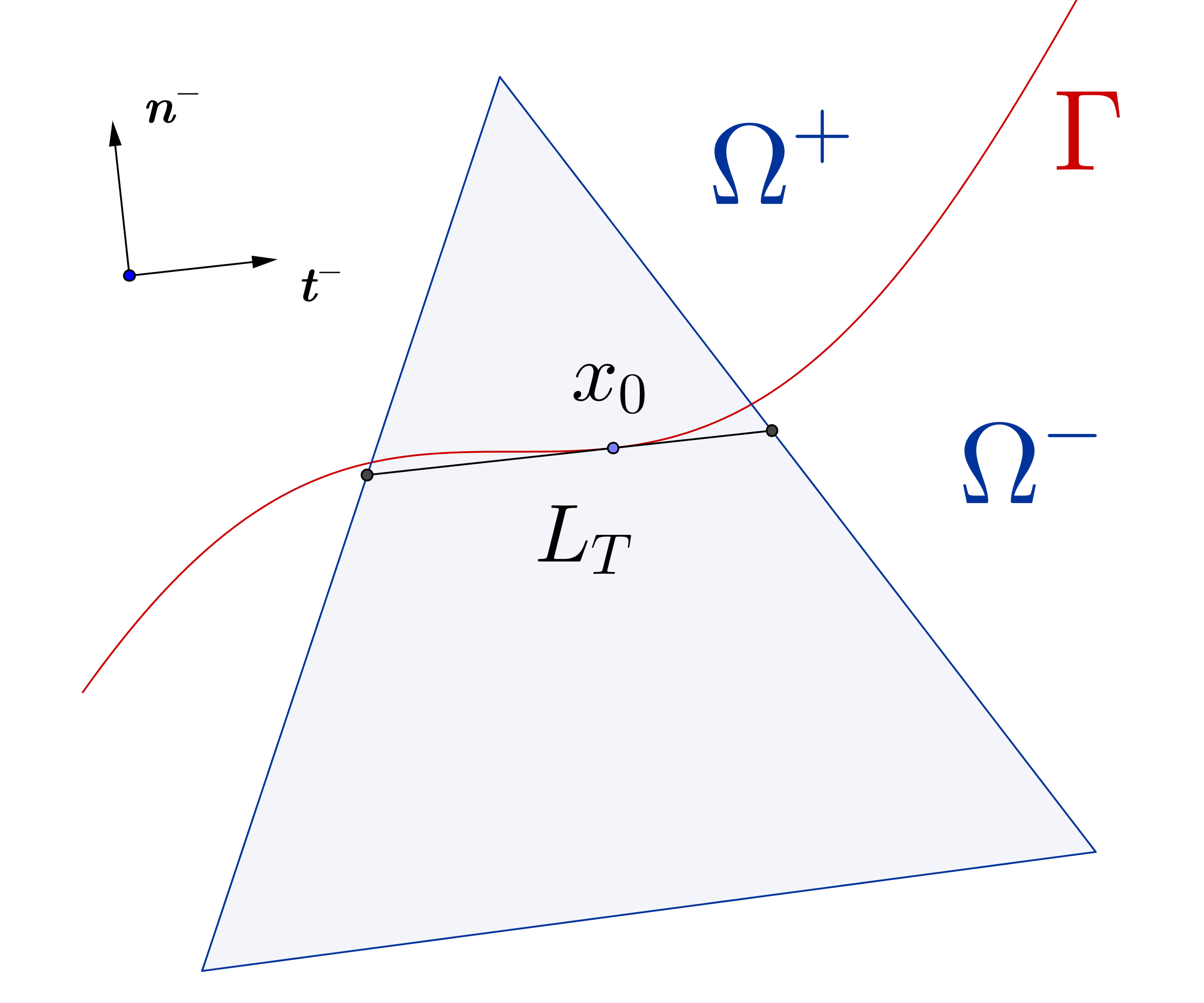}
\label{figure1}
\end{SCfigure}

In order to define our finite element space, we will need the following lemma.

\begin{lemma} \label{Lemma1}
{Consider the operator $\C:\mathbb{P}^1(T^+)\rightarrow \mathbb{P}^1(T^-)$ defined by}
\begin{alignat}{1}
{\C}(v)(x_0) &\,\,:=\,\, v(x_0), \label{eqUpsilon1} \\
  ( D_{\n{t}^+_0} {\C}(v) )(x_0) &\,\,:=\,\,  ( D_{\n{t}^+_0} v )(x_0),  \label{eqUpsilon2} \\
\rho^-( D_{\n{n}^+_0} {\C}(v) )(x_0) &\,\,:=\,\,
\rho^+( D_{\n{n}^+_0} v )(x_0),\label{eqUpsilon3}
\end{alignat}
where ${\n{n}^\pm_0} = {\n{n}^\pm}(x_0)$ and ${\n{t}^\pm_0} = {\n{t}^\pm}(x_0)$. {Then, $\Upsilon$ is well defined.}
\end{lemma}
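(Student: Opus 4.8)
The plan is to exhibit $\Upsilon(v)$ explicitly as the unique affine function on $T^-$ determined by the three prescribed data, and then argue that the map $v \mapsto \Upsilon(v)$ is well defined (in fact linear). Since $\mathbb{P}^1(T^-)$ is a three-dimensional vector space, an affine function $w$ on $T^-$ is uniquely determined by its value at a point together with its two directional derivatives along any basis $\{\n{t}^+_0, \n{n}^+_0\}$ of $\mathbb{R}^2$: indeed $w(x) = w(x_0) + (D_{\n{t}^+_0} w)(x_0)\,(x-x_0)\cdot \n{t}^+_0 + (D_{\n{n}^+_0} w)(x_0)\,(x-x_0)\cdot \n{n}^+_0$, because the gradient of an affine function is constant. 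So I would first note that $\{\n{t}^+_0,\n{n}^+_0\}$ is indeed an orthonormal basis of $\mathbb{R}^2$ (they are obtained from one another by a $90^\circ$ rotation), hence any triple of real numbers $(a,b,c)$ gives rise to exactly one $w\in\mathbb{P}^1(T^-)$ with $w(x_0)=a$, $(D_{\n{t}^+_0}w)(x_0)=b$, $(D_{\n{n}^+_0}w)(x_0)=c$.

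Next, given $v\in\mathbb{P}^1(T^+)$, I would set $a := v(x_0)$, $b := (D_{\n{t}^+_0}v)(x_0)$, and $c := (\rho^+/\rho^-)(D_{\n{n}^+_0}v)(x_0)$; these are well-defined real numbers since $\rho^- > 0$ by hypothesis. Define $\Upsilon(v)$ to be the unique element of $\mathbb{P}^1(T^-)$ associated to $(a,b,c)$ by the previous paragraph. Then \eqref{eqUpsilon1} and \eqref{eqUpsilon2} hold by construction, and \eqref{eqUpsilon3} holds because $\rho^-\,(D_{\n{n}^+_0}\Upsilon(v))(x_0) = \rho^- c = \rho^+\,(D_{\n{n}^+_0}v)(x_0)$. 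Uniqueness of $w$ for given data forces $\Upsilon(v)$ to be the only function satisfying \eqref{eqUpsilon1}--\eqref{eqUpsilon3}, so $\Upsilon$ is well defined; linearity (which will be convenient later even if not stated) follows since $v\mapsto(a,b,c)$ is linear and the correspondence $(a,b,c)\mapsto w$ is linear.

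There is essentially no hard step here: the statement is a linear-algebra bookkeeping fact. The only mild points to check are that $\n{n}^+_0$ and $\n{t}^+_0$ at the single point $x_0$ are well defined (they are, since $\Gamma$ is a regular $\mathcal{C}^2$ curve and $x_0\in\TG\subset\Gamma$) and that they form a basis of the plane (they are orthonormal). I would also remark that the line segment $L_T$ tangent to $\Gamma$ at $x_0$ and the vectors $\n{n}^\pm_0,\n{t}^\pm_0$ enter only through their directions, so no nondegeneracy of $T^\pm$ is needed for $\Upsilon$ itself to make sense; the geometric role of $T$ being genuinely cut by $\Gamma$ will matter only later when estimating norms. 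Thus the proof is short: construct $\Upsilon(v)$ by interpolating the three scalar data at $x_0$, invoke unisolvence of $\mathbb{P}^1$ against a point value plus two directional derivatives, and observe $\rho^->0$ makes the scaling in \eqref{eqUpsilon3} legitimate.
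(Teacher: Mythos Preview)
Your proof is correct; the paper does not actually supply a proof of this lemma, treating it as self-evident, and your argument (unisolvence of $\mathbb{P}^1$ against a point value plus two directional derivatives in the orthonormal frame $\{\n{t}_0^+,\n{n}_0^+\}$, with $\rho^->0$ making the normal-derivative scaling well defined) is exactly the natural one.
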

Note that the exact solution $u^\pm$ satisfies the transmission
conditions \eqref{eqUpsilon1}, \eqref{eqUpsilon2} and \eqref{eqUpsilon3} on all points over $\Gamma$.

Next, given $T\in \mathcal{T}_h^\Gamma$ and for each $v
\in \mathbb{P}^1(T^+)$ we can consider the unique corresponding function
\[
G(v)=
\begin{cases}
v, & \text{ on } T^{+}, \\
{\C}(v), & \text{ on } T^{-}.
\end{cases} \]
Let  $\text{ span } \{v_1, v_2, v_3\}$ be a basis for $\mathbb{P}^1(T)$
restricted to $T^+$.  Then we define the local finite element space
\begin{equation}\label{localFEspace}
S^1(T)\,\,= \,\,
\begin{cases}
\text{span } \big\{G(v_1), G(v_2), G(v_3) \big\},& \mbox{ if } T\in \mathcal{T}_h^\Gamma, \\
\qquad\qquad\qquad\mathbb{P}^1(T), & \mbox{ if } T \in \mathcal{T}_h \backslash \mathcal{T}_h^\Gamma.
\end{cases}
\end{equation}
{An explicit construction of basis functions of the local space $S^1(T)$ is given in Appendix \ref{AppendixS1T}}.
The global finite element space is defined by
\begin{equation*}
V_h\,:=\, \left\{ v\,:\, v|_T \in S^1(T),\, \forall T \in \mathcal{T}_h,\, v \text{ is continuous across all edges in } \EhGn \right\}.
\end{equation*}

\subsection{Finite element method} \label{FEMsub}

We begin this section {by} introducing some standard discontinuous finite element notation for jumps and averages.

For a piecewise smooth function $v$ with support on $\mathcal{T}_h$,  we define its average and jump across an interior edge $e\in\mathcal{E}^h\backslash\partial \Omega$, shared by elements $T_1$ and $T_2$, as
\begin{equation*}
\big\{v\big\} \,\,= \,\,\frac{v|_{T_{1}}+v|_{T_{2}}}{2},\qquad\llbracket v \rrbracket\,\,=\,\, v|_{T_{1}}\n{n}_{1} \,+\,v|_{T_2}\n{n}_{2},
\end{equation*}
where $\n{n}_{1}$ and $\n{n}_{2}$ are the outward pointing  unit normal vectors to $T_{1}$ and $T_{2}$, respectively.
Similarly, if $\n{\tau}$ is a vector-valued function, piecewise smooth on $\mathcal{T}_h$, its average and normal jump across an interior edge $e$ are defined as
\begin{equation*}
\big\{\n{\tau}\big\} \,\,= \,\,\frac{\n{\tau}|_{T_{1}}+\n{\tau}|_{T_{2}}}{2},\qquad \llbracket \n{\tau} \rrbracket\,\,=\,\, \n{\tau}|_{T_{1}}\cdot\n{n}_{1} \,+\,\n{\tau}|_{T_2}\cdot\n{n}_{2}.
\end{equation*}

Next we introduce the finite element approximation to problem \eqref{Problem}.  First, we define the space $V$ as the union of the broken Sobolev spaces
\begin{equation}\label{VandH2h}
V := H^2_h(\Omega^+)\cup H^2_h(\Omega^-),\quad\mbox{where}\quad H_h^{2}(\Omega^{\pm})= \{v: v|_{T^\pm} \in H^2(T^\pm), \text{ for all } T \in \mathcal{T}_h\}.
\end{equation}
We can then define the bilinear form $a_h: V\times V \rightarrow \mathbb{R}$  and the linear functional $(f,\cdot):V \rightarrow \mathbb{R}$ by
\begin{alignat}{1}
a_h(w, v)\,\,:=\,\,& \int_{\Omega} \rho \nabla_h w \cdot \nabla_h v - \sum_{e \in \EhG}  \int_e  \left(\big\{\rho \nabla_h v \big\}
\cdot \llbracket w \rrbracket+   \big\{\rho \nabla_h  w \big\} \cdot \llbracket v \rrbracket\right) \label{a_h}\\
& +  \sum_{e \in \EhG} \left(\frac{\gamma}{|e^-|} \int_{e^-} \rho^- \llbracket w \rrbracket \cdot \llbracket v \rrbracket+ \frac{\gamma}{|e^+|} \int_{e^+} \rho^+ \llbracket w \rrbracket\cdot \llbracket v \rrbracket \right) \nonumber   \\
& + \sum_{e \in \EhG} \left({|e^-|} \int_{e^-} \rho^-\llbracket \nabla_h  v\rrbracket \, \llbracket \nabla_h  w \rrbracket +{|e^+|} \int_{e^+} \rho^+\llbracket \nabla_h  v\rrbracket\,\llbracket \nabla_h  w\rrbracket \right) ,\nonumber
\end{alignat}
\begin{equation*}
(f,v)\,\,:=\,\, \sum_{T\in \mathcal{T}_h} \Big( \int_{T^-} f^- v \, +\,\int_{T^+} f^+ v  \Big),
\end{equation*}
for a penalty parameter $\gamma\,>\,0$. {The discrete gradient operator $\nabla_h$ is piecewise defined on $T^{\pm}$ for an element $T\in\mathcal{T}_h$ by
\begin{equation*}
\nabla_h v|_{T^{\pm}} = D v = \nabla v.
\end{equation*}

Finally, the finite element approximation solves: Find $u_h \in V_h$ such that
\begin{equation}\label{fem}
a_h(u_h, v)\,\,=\,\,(f,v), \quad \text{ for all } v \in V_h.
\end{equation}

Note that {in \eqref{a_h}} we not only  penalize the jumps of the function but also the normal jumps of the first derivatives across edges. This will allow us to prove coerciveness and a priori error estimates independent of the contrast of the coefficients and also independent of how small $T^{+}$ or $T^-$ might be. Finally, we like to stress that in \eqref{a_h} only the normal derivative jumps are penalized, not the tangential derivative jumps.

\begin{figure}
\begin{center}
\includegraphics[scale=.38]{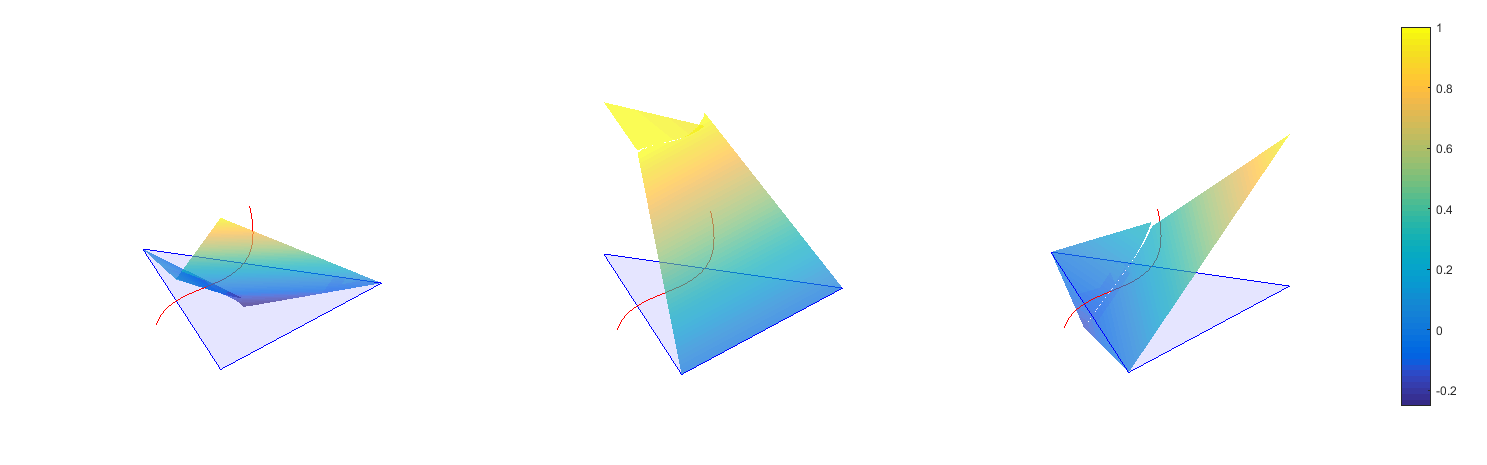}\\
\end{center}
\caption{Illustration of basis functions on triangle {in} Figure \ref{figure1} with $\rho^+ = 100$  and $\rho^- = 1$.}
\label{plotbasis}
\end{figure}

\section{Local Approximation} \label{localappro}
In this section we show that our finite element space has optimal local approximation properties.  {Henceforth we will keep track, as much as possible, on how constants depend on the maximum curvature $\kappa$, and in general in the geometry of sub-domains. When we use standard results (e.g. regularity and extensions lemmas) we just state that the constants depend on the geometry without explicitly saying how they scale with quantities such as  curvature and the radius of the tubular neighborhood $r$}. In order to define an interpolation operator onto $V_h$  we first state an extension result.
{Consider $u^\pm \in H^2(\Omega^\pm)$ (with $u^\pm  \equiv 0$ on $\partial \Omega^\pm \cap  \partial \Omega $). Then, there exists  a constant $C$ and extensions
$u_E^\pm \in H^2(\Omega)$ with the following properties
\begin{alignat}{2}
u_E^\pm&\,\,=\,\,u^\pm \qquad \text{ in } \Omega^\pm, \\
\|D u_E^\pm\|_{L^2(\Omega)}+\|D^2 u_E^\pm\|_{L^2(\Omega)} & \le {C}( \|D u^\pm\|_{L^2(\Omega^\pm)}+\|D^2 u^\pm\|_{L^2(\Omega^\pm)} ).
\label{extension1}
\end{alignat}
This result follows from Theorem 7.25 in \cite{MR737190} and applying Poincare's inequalities.  {Considering that we will only require the extensions $u_E^\pm$  to be defined on patches of elements on $\ThG$, in fact,} we only need the following  more {\it local} bound  which could lead to a better geometric constant:
\begin{equation}
\|D u_E^\pm\|_{L^2(\Omega^{\pm}\bigcup Tub(2h))}+\|D^2 u_E^\pm\|_{L^2(\Omega^{\pm}\bigcup Tub(2h))}  \le \Ce ( \|D u^\pm\|_{L^2(\Omega^\pm)}+\|D^2 u^\pm\|_{L^2(\Omega^\pm)} ).\label{extension}
\end{equation}
For the sake of simplicity, we do not prove  here how $\Ce$ depends on the geometric constants (e.g. $\kappa$ and $r$).}

\begin{Def} \label{Definition1}
Let $u^{\pm} \in H^2(\Omega^\pm)$.  For each $ T \in \mathcal{T}_h^{\Gamma}$ we
define $I_T u  \in S^1(T)$. In fact,
we define $I_T u $ on all $\wT$
\[
I_T u=
\begin{cases}
I^+_T u,  & \text{ in } \wT^{+}  \\
I^-_T u,  & \text{ in } \wT^{-},
\end{cases} \]
where $I_T^\pm$ are defined satisfying the following conditions
\begin{equation}\label{defIT}
\left\{
  \begin{array}{rcl}
(I^-_T u)(x_0) \,\,\,:=&(J_T u^+_E)(x_0)&=:\,\,\, (I^+_T u) (x_0) \\
  (D_{\n{t}^+_0}  I^-_T u)(x_0)\,\,\,:=&  (D_{\n{t}^+_0} (J_T u^+_E))(x_0)&=:\,\,\,(D_{\n{t}^+_0} I^+_T u)(x_0)    \\
\rho^- (D_{\n{n}^+_0} I^-_T u)(x_0) \,\,\,:=& \rho^- (D_{\n{n}^+_0} J_T u^-_E)(x_0) &=:\,\,\, \rho^+ (D_{\n{n}^+_0} I^+_T u)(x_0).
  \end{array}
\right.
\end{equation}
Here  $J_T$ is the $L^2$ projection operator onto $\mathbb{P}^1(\wT)$ ({note that $\wT \subset Tub(r)$ since we are assuming $2h \le r$}).

{If $T$ does not belong to  $\mathcal{T}_h^{\Gamma}$, we consider the sets $\Omega_{h}^{\pm} = \cup\{T\in\mathcal{T}_{h}: T \subset \Omega^{\pm}\}$, \revblue{and we define $I_T u = I_{\text{sz}} u^\pm$, for $T\subset \Omega^{\pm}$, where  $I_{\text{sz}} u^\pm$ is the Scott-Zhang interpolant of $u^\pm$ defined on $\Omega_h^\pm$.}  However, we need to modify the interpolant at every vertex $x$ (if one exists) that is an endpoint of an edge  $e$ satisfying  $e \subset \partial \Omega_h^+\cap \partial \Omega_h^-$ (e.g. $e \subset \Gamma$). In this case, we define $I_{sz} u^\pm (x)$ to be the average of $u^\pm$ on the edge $e$. Note that such an $x$ might correspond to two edges.  Either edge will work for the definition of  $I_{sz} u^\pm (x)$.   This will give $I_{sz} u^+= I_{sz} u^-$ for every edge  $e \subset \partial \Omega_h^+\cap \partial \Omega_h^-$ since $u^+=u^-$ on such an edge.  }

Consequently, we define the interpolation operator $I_h$ onto the finite element space $V_h$ as the restriction of the local interpolation operator $I_T$, i.e.
\begin{equation}\label{I_h}
I_h u|_T \,\,=\,\, I_T(u)|_T,\qquad \mbox{for all } T\in \mathcal{T}_h.
\end{equation}
\end{Def}

\revblue{The local interpolant $I_T u$ was constructed so that Lemma \ref{localcurve} holds which in turn is the main tool to prove the crucial Lemma \ref{normestimate}. The interpolator $I_T u$ was designed so that if we multiply the
  whole inequalities \eqref{localcurve1} and \eqref{localcurve2} by  $\rho^-$ and $\rho^+$, respectively, only terms of the form $\rho^\pm \|D ^j u_E^\pm\|_{L^2(\omega_T)}$  (after using $\rho^- \le \rho^+$) will appear.}

 \revblue{Since the proof of Lemma  \ref{localcurve} is quite involved, we first  prove a local energy stability of the interpolant whos proof is much easier and that motivates the definition of $I_h$. More specifically, we prove the following lemma. }
 
 \begin{lemma}
 It holds,
 \begin{equation*}
\revblue{ \rho^+ \|\nabla(I_T^+ u)\|_{L^2(\omega_T^+)} + \rho^-\|\nabla(I_T^- u)\|_{L^2(\omega_T^-)}  \le C (\rho^- \| \nabla u_E^-\|_{L^2(\omega_T)}+ \rho^+ \| \nabla u_E^+\|_{L^2(\omega_T)}).}
 \end{equation*}
 \revblue{where $C$ is independent of $\rho^\pm$}.
 \end{lemma}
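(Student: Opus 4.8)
The plan is to estimate $\nabla(I_T^{\pm}u)$ directly from the defining conditions \eqref{defIT}, exploiting that $I_T^{\pm}u$ are linear polynomials whose gradients are therefore constant vectors on $T^{\pm}$, and then transferring the $L^2(T^{\pm})$ norms to $L^2(\omega_T)$ norms of the data. First I would write $\nabla(I_T^+u)$ in the orthonormal frame $\{\n{t}^+_0,\n{n}^+_0\}$: the tangential component is $D_{\n{t}^+_0}(J_Tu_E^+)(x_0)$ and the normal component is $D_{\n{n}^+_0}(I_T^+u)(x_0)=(\rho^-/\rho^+)D_{\n{n}^+_0}(J_Tu_E^-)(x_0)$; similarly $\nabla(I_T^-u)$ has tangential component $D_{\n{t}^+_0}(J_Tu_E^+)(x_0)$ and normal component $D_{\n{n}^+_0}(J_Tu_E^-)(x_0)$. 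Since $J_Tu_E^{\pm}\in\mathbb{P}^1(\omega_T)$ the quantities $D_{\n{t}^+_0}(J_Tu_E^+)$ and $D_{\n{n}^+_0}(J_Tu_E^-)$ are constants, so evaluating at $x_0$ costs nothing, and we get $|\nabla(I_T^+u)|\le |\nabla(J_Tu_E^+)| + (\rho^-/\rho^+)|\nabla(J_Tu_E^-)|$ and $|\nabla(I_T^-u)|\le |\nabla(J_Tu_E^+)| + |\nabla(J_Tu_E^-)|$ pointwise, with constants bounded on the respective sub-elements.

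Next I would multiply by $\rho^+$ and $\rho^-$ respectively and add. Using $|T^{\pm}|\le|\omega_T|$ and the fact that $\nabla(J_Tu_E^{\pm})$ is constant on $\omega_T$, the term $\rho^+\|\nabla(I_T^+u)\|_{L^2(\omega_T^+)}$ is bounded by $\rho^+\|\nabla(J_Tu_E^+)\|_{L^2(\omega_T)} + \rho^-\|\nabla(J_Tu_E^-)\|_{L^2(\omega_T)}$ (the contrast factor $\rho^-/\rho^+$ cancels the $\rho^+$, leaving $\rho^-$), and $\rho^-\|\nabla(I_T^-u)\|_{L^2(\omega_T^-)}$ is bounded by $\rho^-\|\nabla(J_Tu_E^+)\|_{L^2(\omega_T)} + \rho^-\|\nabla(J_Tu_E^-)\|_{L^2(\omega_T)}$; since $\rho^-\le\rho^+$, the first of these is controlled by $\rho^+\|\nabla(J_Tu_E^+)\|_{L^2(\omega_T)}$. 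Thus every term has the desired form $\rho^{\pm}\|\nabla(J_Tu_E^{\pm})\|_{L^2(\omega_T)}$, and it remains to replace $J_Tu_E^{\pm}$ by $u_E^{\pm}$.

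Finally I would invoke the $H^1$-stability of the $L^2$-projection $J_T$ onto $\mathbb{P}^1(\omega_T)$: $\|\nabla(J_Tu_E^{\pm})\|_{L^2(\omega_T)}\le C\|\nabla u_E^{\pm}\|_{L^2(\omega_T)}$, where $C$ depends only on the shape-regularity of the patch $\omega_T$ (a standard Bramble–Hilbert / scaling argument on the reference patch, using that $J_T$ preserves constants so $\nabla J_T=\nabla J_T(\cdot - \text{mean})$ and $\|J_T\cdot\|\le\|\cdot\|$). Combining the three steps yields the claimed bound with $C$ independent of $\rho^{\pm}$. The main obstacle, though a mild one, is bookkeeping the change of domains from $T^{\pm}$ to $\omega_T$ while keeping the contrast factors in the right place — in particular making sure the $\rho^-/\rho^+$ ratio coming from \eqref{eqUpsilon3} is used to absorb exactly the oversized $\rho^+$ in front of the normal component of $\nabla(I_T^+u)$, rather than naively bounding by $\rho^+\|\nabla u_E^-\|$, which would destroy contrast independence. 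A secondary point is verifying that $\|\nabla(I_T^{\pm}u)\|_{L^2(\omega_T^{\pm})}$ (note the patch, not just the element $T^{\pm}$) is still just $|\omega_T^{\pm}|^{1/2}$ times the constant gradient, which is immediate since $I_T^{\pm}u$ is defined as a single linear polynomial on all of $\omega_T^{\pm}$.
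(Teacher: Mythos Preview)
Your proof is correct and follows essentially the same route as the paper's: decompose $\nabla(I_T^\pm u)$ into its tangential and normal components at $x_0$, read off these components from the defining conditions \eqref{defIT} (in particular capturing the crucial $\rho^-/\rho^+$ factor on the normal part of $I_T^+u$), pass from $\omega_T^\pm$ to $\omega_T$ by constancy of the gradients, and finish with the $H^1$-stability of $J_T$ via subtraction of the mean, $L^2$-stability, an inverse estimate, and Poincar\'e. The paper spells out the last step explicitly rather than citing $H^1$-stability as a known fact, but the argument is identical.
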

 \begin{proof}
 \revblue{Clearly we have}
 \begin{equation*}
 \revblue{ \|\nabla(I_T^+ u)\|_{L^2(\omega_T^+)}^2 = \|D_{\n{t}_0^+}(I_T^+ u)\|_{L^2(\omega_T^+)}^2 + \|D_{\n{n}_0^+}I_T^{+} u\|_{L^2(\wT^{+})}^2}.
 \end{equation*}
 \revblue{Using that $D_{\n{t}_0^+}(I_T^+ u)$ and $D_{\n{n}_0^+}(I_T^{+} u)$ are constant and using the definition of the interpolant \eqref{defIT} we have }
\begin{equation*}
 \revblue{ \rho^+ \|\nabla(I_T u)\|_{L^2(\omega_T^+)} \le C (\rho^+ \|D_{\n{t}_0^+}(J_T u_E^+)\|_{L^2(\omega_T^+)} + \rho^{-}\|D_{\n{n}_0^+}(J_T u_E^-)\|_{L^2(\wT^{+})}).}
 \end{equation*}
 \revblue{We trivially have }
 \begin{equation*}
 \revblue{ \rho^+ \|\nabla(I_T u)\|_{L^2(\omega_T^+)} \le C (\rho^+ \|\nabla (J_T u_E^+)\|_{L^2(\omega_T)} + \rho^{-}\|\nabla (J_T u_E^-)\|_{L^2(\omega_T)}).}
 \end{equation*}
 \revblue{Define $c^{\pm}=\frac{1}{|\omega_T|} \int_{\omega_T}  u_E^{\pm}$ and note that $J_T c^\pm=c^\pm $. We then see  }
\begin{equation*}
 \revblue{ \rho^+ \|\nabla(I_T u)\|_{L^2(\omega_T^+)} \le C(\rho^+ \|\nabla (J_T (u_E^+-c^+))\|_{L^2(\omega_T)} + \rho^{-}\|\nabla (J_T (u_E^--c^-))\|_{L^2(\omega_T)}).}
 \end{equation*}
 \revblue{Using an inverse estimate, the stability of the $L^2$ projection $J_T$, and Poincare's inequality we get }
 \begin{equation*}
 \revblue{ \rho^+ \|\nabla(I_T u)\|_{L^2(\omega_T^+)} \le C (\rho^+ \|\nabla u_E^+\|_{L^2(\omega_T)} + \rho^{-}\|\nabla u_E^-\|_{L^2(\omega_T)}).}
 \end{equation*}
 \revblue{In a similar way we can prove the estimate for $\rho^-\|\nabla(I_T^- u)\|_{L^2(\omega_T^-)} $ where we only need to use $\rho^- \le \rho^+$. }
 
 \end{proof}

\begin{lemma}\label{lemma-s}
Consider $T\in \ThG$. Then, for every $v \in \mathbb{P}^1(\wT)$ the following bound holds
\begin{equation*}
  h_T^j\|D^j v\|_{L^{2}(\wT)} \,\,\le\,\, C \left(h_T |v(x_0)| + h^2_T |D_{\n{n}^+_0} v(x_0)|
+ h^2_T |D_{\n{t}^+_0} v(x_0)|\right)\revblue{,\quad j=0,1}.
\end{equation*}
\end{lemma}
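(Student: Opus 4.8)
The plan is to exploit that $v$ is affine on the whole patch $\wT$, so that $Dv$ is a \emph{constant} vector and $v$ admits the exact identity $v(x)=v(x_0)+Dv\cdot(x-x_0)$ for every $x\in\wT$, where $x_0\in\overline{T}\subseteq\overline{\wT}$. First I would record two purely shape-regularity facts about the patch, namely $\operatorname{diam}(\wT)\le C\,h_T$ and $|\wT|\le C\,h_T^2$, with $C$ depending only on the shape-regularity constant of $\mathcal{T}_h$. These are standard: an element sharing an edge with $T$ has diameter comparable to $h_T$ (the shared edge has length $\gtrsim h_T$ by shape-regularity of $T$ and $\lesssim$ the neighbor's diameter, and symmetrically), and an element sharing only a vertex with $T$ is reached through a fan of edge-neighbors around that vertex whose cardinality is bounded in terms of the shape-regularity constant; hence every element of the patch has diameter $\le C h_T$ and the patch consists of $O(1)$ such elements.

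Next, since $\{\n{n}^+_0,\n{t}^+_0\}$ is an orthonormal basis of $\mathbb{R}^2$ and $D_{\n{n}^+_0}v$, $D_{\n{t}^+_0}v$ are constants, one has $|Dv|=\big(|D_{\n{n}^+_0}v(x_0)|^2+|D_{\n{t}^+_0}v(x_0)|^2\big)^{1/2}\le |D_{\n{n}^+_0}v(x_0)|+|D_{\n{t}^+_0}v(x_0)|$. For $j=1$ the bound is then immediate:
\[
h_T\|D v\|_{L^2(\wT)}=h_T\,|Dv|\,|\wT|^{1/2}\le C\,h_T^2\big(|D_{\n{n}^+_0}v(x_0)|+|D_{\n{t}^+_0}v(x_0)|\big),
\]
which is dominated by the right-hand side of the claim. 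For $j=0$ I would use the affine identity together with $|x-x_0|\le\operatorname{diam}(\wT)\le C h_T$ to obtain the pointwise estimate $|v(x)|\le |v(x_0)|+C h_T\big(|D_{\n{n}^+_0}v(x_0)|+|D_{\n{t}^+_0}v(x_0)|\big)$ for all $x\in\wT$, and then integrate:
\[
\|v\|_{L^2(\wT)}\le|\wT|^{1/2}\max_{\wT}|v|\le C h_T|v(x_0)|+C h_T^2\big(|D_{\n{n}^+_0}v(x_0)|+|D_{\n{t}^+_0}v(x_0)|\big).
\]
Combining the two cases gives the lemma.

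There is no genuine obstacle here; the statement is essentially bookkeeping. The only point requiring a little care is the dependence of $C$: it must involve only the shape-regularity constant and not how $\Gamma$ cuts $T$ (nor $\kappa$), which is why the argument is carried out entirely in terms of $h_T$, $\operatorname{diam}(\wT)$ and $|\wT|$, and never refers to $|T^\pm|$ or to the position of $x_0$ on $\Gamma$ beyond $x_0\in\overline{T}$. This robustness is exactly what makes the estimate useful for the contrast-independent analysis later.
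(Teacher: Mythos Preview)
Your proof is correct and follows essentially the same approach as the paper: bound the $L^2$ norm by $|\wT|^{1/2}$ times the $L^\infty$ norm (the paper phrases this as ``Cauchy--Schwarz''), then use that $v$ is affine. The paper's proof is a two-line sketch (``Using the fact that $v$ is a linear function, the lemma follows''); you have simply unpacked those details, including the explicit Taylor expansion around $x_0$ and the shape-regularity bounds $\operatorname{diam}(\wT)\le C h_T$, $|\wT|\le C h_T^2$.
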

\begin{proof}
Using the Cauchy-Schwarz inequality one has
\begin{equation*}
h_T^j\|D^j v\|_{L^{2}(\wT)}\,\, \le\,\, C \, h_T^{j+1}\|D^j v\|_{L^{\infty}(\wT)}\revblue{,\quad j=0,1.}
\end{equation*}
Using the fact that $v$ is a linear function, the lemma follows.
\end{proof}

We next prove a fundamental result of this paper, a local approximation
property on the space $S^1(T)$.

\begin{lemma} \label{localcurve}
Let $u^\pm \in H^2(\Omega^\pm)$ satisfying the interface conditions
$\left[u\right]=0$ and $\left[\rho D_{\n{n}}  u  \right]=0$. Then, for any $ T \in \mathcal{T}_h^{\Gamma}$ and
$j\,=\,0,\, 1$, the following bounds hold:
\begin{alignat}{1}
h_T^{j}  \|D^j(u^{-}-I_T^{-}u)\|_{L^2(\wT^{-})}\,\, \le\,\,  C {(1+\kappa)}\,
h_T^{2} \left(\| D u_E^{-}\|_{L^2 (\wT)} +
\| D^2 u_E^{-}\|_{L^2 (\wT)}+  \| D^2 u_E^{+}\|_{L^2 (\wT)} \right) \label{localcurve1}
\end{alignat}
and
\begin{alignat}{1}
 \label{localcurve2}h_T^{j} \|D^j(u^{+}-I_T^{+}u)\|_{L^2(\wT^{+})}\,\, \le& \,\,
C \, {(1+ \kappa)} h_T^{2}   \Big(  \| D u_E^{+}\|_{L^2 (\wT)} + \| D^2 u_E^{+}\|_{L^2 (\wT)}  \\
&\,\, \quad +\frac{\rho^-}{\rho^+}\| D^2 u_E^{-}\|_{L^2 (\wT)}\Big). \nonumber
\end{alignat}
\end{lemma}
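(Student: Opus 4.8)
The plan is to reduce the $H^1$ and $L^2$ approximation estimates for $I_T^\pm u$ on the curved sub-regions $\omega_T^\pm$ to estimates for the standard $L^2$ projection $J_T u_E^\pm$ onto $\mathbb{P}^1(\omega_T)$, exploiting the fact that $I_T^\pm u$ is a linear polynomial on $T^\pm$ whose point value and two directional derivatives at $x_0$ are prescribed by \eqref{defIT}. First I would write $u^\pm - I_T^\pm u = (u^\pm - J_T u_E^\pm) + (J_T u_E^\pm - I_T^\pm u)$. The first term is controlled by the standard Bramble--Hilbert / Scott-Zhang-type estimate for the $L^2$ projection on $\omega_T$, namely $h_T^j\|D^j(u_E^\pm - J_T u_E^\pm)\|_{L^2(\omega_T)} \le C h_T^2 \|D^2 u_E^\pm\|_{L^2(\omega_T)}$, after noting $u^\pm = u_E^\pm$ on $\omega_T^\pm$. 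The second term, $w^\pm := J_T u_E^\pm - I_T^\pm u \in \mathbb{P}^1(\omega_T)$, is a linear polynomial, so by Lemma \ref{lemma-s} it suffices to bound $h_T|w^\pm(x_0)|$, $h_T^2|D_{\n{n}_0^+} w^\pm(x_0)|$ and $h_T^2|D_{\n{t}_0^+} w^\pm(x_0)|$.

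Next I would estimate those three nodal quantities using the defining relations \eqref{defIT} together with the interface conditions $[u]=0$, $[\rho D_{\n n} u]=0$ satisfied by the exact solution. Take the ``$+$'' estimate \eqref{localcurve2} as the representative case. From \eqref{defIT}, $(I_T^+u)(x_0) = (J_T u_E^+)(x_0)$ and $(D_{\n{t}_0^+} I_T^+ u)(x_0) = (D_{\n{t}_0^+} J_T u_E^+)(x_0)$, so the $w^+(x_0)$ and tangential-derivative contributions vanish identically. The remaining term is $\rho^+(D_{\n{n}_0^+} I_T^+ u)(x_0) = \rho^-(D_{\n{n}_0^+} J_T u_E^-)(x_0)$, so $D_{\n{n}_0^+} w^+(x_0) = D_{\n{n}_0^+} J_T u_E^+(x_0) - \tfrac{\rho^-}{\rho^+} D_{\n{n}_0^+} J_T u_E^-(x_0)$. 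Here is where the curvature and the interface condition enter: the exact normal-flux condition holds pointwise on $\Gamma$ but the normal direction $\n{n}_0^+ = \n{n}^+(x_0)$ is fixed, whereas $\Gamma$ curves away from $x_0$; one writes $\rho^+ D_{\n{n}_0^+}u^+(x_0) = \rho^- D_{\n{n}_0^+}u^-(x_0)$ exactly at $x_0$ (since at $x_0$ the fixed normal coincides with the true normal), hence $\rho^+ D_{\n{n}_0^+}(J_T u_E^+)(x_0) - \rho^- D_{\n{n}_0^+}(J_T u_E^-)(x_0) = \rho^+ D_{\n{n}_0^+}(J_T u_E^+ - u_E^+)(x_0) - \rho^-D_{\n{n}_0^+}(J_T u_E^- - u_E^-)(x_0)$. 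Dividing by $\rho^+$ and using $\rho^-\le\rho^+$, we get $|D_{\n{n}_0^+}w^+(x_0)| \le |D_{\n{n}_0^+}(J_Tu_E^+ - u_E^+)(x_0)| + \tfrac{\rho^-}{\rho^+}|D_{\n{n}_0^+}(J_Tu_E^- - u_E^-)(x_0)|$. Each of these is a pointwise value of the gradient error of an $L^2$ projection of an $H^2$ function; one controls it by an inverse inequality on $\omega_T$ plus the $L^2$-projection error estimate, picking up a factor like $h_T^{-1}\cdot h_T^{-1}\cdot h_T^2 = 1$ after the $h_T^2$ prefactor in Lemma \ref{lemma-s} is applied, giving the stated $h_T^2(\|D^2 u_E^+\|_{L^2(\omega_T)} + \tfrac{\rho^-}{\rho^+}\|D^2 u_E^-\|_{L^2(\omega_T)})$. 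The $(1+\kappa)$ factor appears because comparing $D_{\n n} u$ at $x_0$ with its value at a nearby point of $\Gamma$ costs a term $\kappa$ times $\|D^2\|$; similarly, controlling the pointwise gradient value requires a local trace/inverse bound whose constant involves the geometry, and the curve segment $T\cap\Gamma$ has length comparable to $h_T$ with the relevant geometric distortion bounded by $1+\kappa h_T \le 1+\kappa$.

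For the ``$-$'' estimate \eqref{localcurve1} the argument is symmetric, with roles partly swapped: from \eqref{defIT}, $I_T^- u$ matches $J_T u_E^+$ in value and tangential derivative at $x_0$, and matches $J_T u_E^-$ (up to the factor $\rho^-/\rho^-=1$) in the $\rho^-$-weighted normal derivative, so $w^-(x_0) = (J_T u_E^- - J_T u_E^+)(x_0)$ and $D_{\n{t}_0^+}w^-(x_0) = D_{\n{t}_0^+}(J_Tu_E^- - J_Tu_E^+)(x_0)$ no longer vanish. To handle these I would use $[u]=0$ on $\Gamma$: at $x_0$, $u^+(x_0)=u^-(x_0)$ and (differentiating the continuity along $\Gamma$, and using that the tangential direction at $x_0$ is the true tangent) $D_{\n{t}_0^+}u^+(x_0) = D_{\n{t}_0^+}u^-(x_0)$, so again the difference $J_Tu_E^- - J_Tu_E^+$ at $x_0$ equals $(J_Tu_E^- - u_E^-) - (J_Tu_E^+ - u_E^+)$ at $x_0$ (plus curvature remainders), each bounded as before. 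The normal-derivative term $D_{\n{n}_0^+}w^-(x_0) = D_{\n{n}_0^+}(J_Tu_E^- - u_E^-)(x_0)$ contributes only $\|D^2 u_E^-\|$. Collecting, the $-$ side only sees $\|Du_E^-\|, \|D^2u_E^-\|, \|D^2u_E^+\|$ on $\omega_T$, with no bad $\rho^+/\rho^-$ factor — exactly \eqref{localcurve1}.

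The main obstacle will be making the pointwise-value arguments rigorous: one needs to bound the \emph{pointwise} value at $x_0$ of the gradient of $u_E^\pm - J_T u_E^\pm$, which for a generic $H^2$ function is not even well-defined pointwise, so this must be routed through the linear polynomial $w^\pm$ and an inverse estimate on $\omega_T$, and one must be careful that $x_0\in\overline{\omega_T}$ and that the shape-regularity constant (and the bounds on $|\omega_T|$, $|\omega_T^\pm|$ relative to $h_T^2$) control the constants uniformly — this is precisely where Assumption \ref{Assumption} and the auxiliary geometric lemmas (\ref{twotri}, \ref{compare}, \ref{TGammalessLT}) and the $(1+\kappa)$ dependence are needed. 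The second delicate point is tracking that, after multiplying \eqref{localcurve1} by $\rho^-$ and \eqref{localcurve2} by $\rho^+$, every surviving term is of the clean form $\rho^\pm\|D^j u_E^\pm\|_{L^2(\omega_T)}$ once $\rho^-\le\rho^+$ is used — the placement of the factor $\rho^-/\rho^+$ in \eqref{localcurve2} is exactly what guarantees this, and verifying it is the conceptual heart of the lemma even though the individual estimates are routine.
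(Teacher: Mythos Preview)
Your decomposition $u^\pm - I_T^\pm u = (u_E^\pm - J_T u_E^\pm) + w^\pm$ with $w^\pm := J_T u_E^\pm - I_T^\pm u \in \mathbb{P}^1(\omega_T)$, followed by Lemma~\ref{lemma-s} to reduce to the three nodal quantities $w^\pm(x_0)$, $D_{\n{t}_0^+}w^\pm(x_0)$, $D_{\n{n}_0^+}w^\pm(x_0)$, is exactly the paper's route, and your identification of which of these vanish (the first two for $w^+$, the third for $w^-$) is correct. (Minor slip: $D_{\n{n}_0^+}w^-(x_0)=0$ exactly, not $D_{\n{n}_0^+}(J_Tu_E^--u_E^-)(x_0)$ as you wrote.)

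The gap is in the estimation of the surviving quantities. You subtract and add the pointwise values $D_{\n{n}_0^+}u_E^\pm(x_0)$ and invoke the interface condition at the single point $x_0$. But for $u_E^\pm\in H^2$ in two dimensions the gradient has no pointwise value in general, and you cannot apply an inverse inequality to the non-polynomial $J_Tu_E^\pm-u_E^\pm$. You correctly flag this as ``the main obstacle'', but the proposed fix (``route through the linear polynomial $w^\pm$ and an inverse estimate on $\omega_T$'') is circular: $w^\pm$ is precisely what you are trying to bound.

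The paper's resolution is different and is also where the $(1+\kappa)$ factor and the first-order terms $\|Du_E^\pm\|_{L^2(\omega_T)}$ actually originate. Since $D_{\n{n}_0^+}w^+$ is a constant, one has $|D_{\n{n}_0^+}w^+(x_0)| = |\omega_T^\Gamma|^{-1/2}\|D_{\n{n}_0^+}w^+\|_{L^2(\omega_T^\Gamma)}$, an $L^2$ norm on the curve segment $\omega_T^\Gamma$. Along $\omega_T^\Gamma$ one decomposes the fixed direction via the \emph{moving} frame, $D_{\n{n}_0^+} = (\n{n}_0^+\!\cdot\n{t}^+)D_{\n{t}^+} + (\n{n}_0^+\!\cdot\n{n}^+)D_{\n{n}^+}$, and then uses the interface identities $D_{\n{t}^+}u_E^+=D_{\n{t}^+}u_E^-$ and $\rho^+D_{\n{n}^+}u_E^+=\rho^-D_{\n{n}^+}u_E^-$ in the trace sense on $\Gamma$, followed by a trace inequality from $\omega_T^\Gamma$ to $\omega_T$. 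The angle defect $|\n{n}_0^+\!\cdot\n{t}^+|\le C\kappa h_T$ on $\omega_T^\Gamma$ is what produces the $\kappa\|Du_E^+\|_{L^2(\omega_T)}$ contribution; the analogous decomposition of $D_{\n{t}_0^+}$ handles $w^-$ and yields the $\kappa\|Du_E^-\|_{L^2(\omega_T)}$ term in \eqref{localcurve1}. Your pointwise argument, if it were legitimate, would produce neither the $(1+\kappa)$ factor nor the $\|Du_E^\pm\|$ terms---which is a sign that the step cannot be made rigorous as stated and must be replaced by the curve-based argument.
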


\begin{proof} First note that by adding and subtracting  $J_T u_E^\pm$ to $u_E^\pm -I^\pm_Tu$,  using the triangle inequality, and by means of the following well known approximation property for $\varphi\in H^{2}(\wT)$
\begin{equation}\label{proj}
h_T^j\|D^j (\varphi-J_T\varphi)\|_{L^{2}(\wT)} \,\,\le\,\, C \, h^2_T
\|D^2 \varphi\|_{L^2(\wT)} \quad\mbox{ for } j \,=\, 0,\,1,
\end{equation}
the proof of the lemma reduces to estimate
\[
h_T^j \| D^j w_T^\pm \|_{L^2(\wT^{\pm})},\quad \mbox{where}\quad
w_T^\pm := J_T u_E^\pm- I_T^\pm u,~~ \mbox{for}~~ w_T^\pm\in {\mathbb{P}^1(\wT)}.
\]
According to the definition of $I_T$ \eqref{defIT}, and denoting
$\n{n}^{\pm}_0 = \n{n}^{\pm}(x_0)$ and $\n{t}^{\pm}_0 = \n{t}^{\pm}(x_0)$,
we have
\begin{equation*}\label{wt-def}
\left\{
  \begin{array}{ll}
w_T^-(x_0) & \,\,= \,\, J_T u_E^-(x_0) - J_T u_E^+(x_0), \\
(D_{\n{t}^+_0} w_T^-)(x_0) &\,\, =\,\, (D_{\n{t}^+_0}  J_T u_E^-)(x_0) - (D_{\n{t}^+_0}  J_T u_E^+)(x_0), \\
(D_{\n{n}^+_0} w_T^-)(x_0) &\,\,= \,\,0,
  \end{array}
\right.
\end{equation*}
and
\begin{equation*}\label{wt+def}
\left\{
  \begin{array}{ll}
w_T^+(x_0) &\,\,= \,\,0, \\
(D_{\n{t}^+_0} w_T^+)(x_0) &\,\,=\,\, 0, \\
\rho^+(D_{\n{n}^+_0}      w_T^+)(x_0) &\,\,= \,\,\rho^+(D_{\n{n}^+_{0}}  J_T u_E^+)(x_0) -
\rho^-(D_{\n{n}^+_0}  J_T u_E^-)(x_0).
  \end{array}
\right.
\end{equation*}
Then, using Lemma \ref{lemma-s} and the values derived for $w_{T}^{\pm}$ we have
\begin{equation} \label{boundeq1}
h_T^j \| D^j w_T^- \|_{L^2(\wT^{-})} \,\,\le\,\,
 C  \left(h_T |w_T^-(x_0)| + h_T^2 |D_{\n{t}^+_0}w_T^-(x_0)|\right),
\end{equation}
and
\begin{equation} \label{boundeq2}
h_T^j \| D^j w_T^+ \|_{L^2(\wT^{+})}\,\, \le\,\,
 C  h_T^2
| D_{\n{n}^+_0}w_T^+(x_0)|.
\end{equation}
We proceed by bounding the two terms in \eqref{boundeq1} and the term in \eqref{boundeq2}, separately. For the first term in \eqref{boundeq1},
we use that $u$ is continuous on the interface \eqref{Problem:c}, in particular on $ \wTG$, and we apply the triangle inequality
\begin{alignat*}{1}
|w^-_T(x_0)|\,\,= \,\,|(J_T u_E^- -J_T u_E^+ )(x_0)|\,\, \le &\,\, \|J_Tu_E^--J_Tu_E^+\|_{L^\infty(\wTG)} \\
 \le & \,\, \|J_Tu_E^--u_E^-\|_{L^\infty(\wTG)} + \|J_Tu_E^+-u_E^+\|_{L^\infty(\wTG)}.
\end{alignat*}
{Note that $ch_T\leq |\wTG| \leq C h_T$, where the second inequality follows from Lemma \ref{TGammalessLT}}. By means of a Sobolev inequality in one dimension, we have
\begin{equation*}
\|J_Tu_E^--u_E^-\|_{L^\infty(\wTG)}  \,\,\le\,\, C \left(\frac{1}{\sqrt{h_T}} \|J_Tu_E^--u_E^-\|_{L^2(\wTG)}+ \sqrt{h_T} \|D(J_Tu_E^--u_E^-)\|_{L^2(\wTG)}\right),
\end{equation*}
consequently, using a trace inequality we obtain
\begin{alignat*}{1}
\|J_Tu_E^--u_E^-\|_{L^\infty(\wTG)}\,\, \le\,\, & C \left(h_T^{-1} \|J_Tu_E^{-}-u_E^-\|_{L^2(\wT)}+ \|D(J_Tu_E^--u_E^-)\|_{L^2(\wT)}\right) \\
 & \quad +C \, h_T \|D^2u_E^-\|_{L^2(\wT)}.
\end{alignat*}
Hence, using the approximation property \eqref{proj}, we get
\begin{equation*}
\|J_T u_E^--u_E^-\|_{L^\infty(\wTG)} \le C h_T \|D^2u_E^-\|_{L^2(\wT)}.
\end{equation*}
Analogously, we can show that
\begin{equation*}
\|J_T u_E^+-u_E^+\|_{L^\infty(\wTG)} \le C h_T \|D^2u_E^+\|_{L^2(\wT)}.
\end{equation*}
Therefore, we have the bound for the first term in \eqref{boundeq1}
\begin{equation}\label{inq:101}
 h_T|w_T^-(x_0)| \,\,\le \,\, C\, h_T^2 \left(\|D^2u_E^-\|_{L^2(\wT)} + \|D^2u_E^+\|_{L^2(\wT)}\right).
\end{equation}
We now turn to the second term in \eqref{boundeq1}. We use the fact that $D_{\n{t}^{\revblue{+}}_{0}}w_T^-$ is constant on $ \wTG$ to obtain
\begin{equation*}
|D_{\n{t}^{+}_{0}}w_T^-(x_0)| \,\,=\,\,
  |\wTG|^{-1/2} \|D_{\n{t}^{+}_0}w_T^-\|_{L^2( \wTG)}\,\,
\leq\,\, C h_T^{-1/2}  \|D_{\n{t}^{+}_0}(J_T u_E^- -J_T u_E^+)\|_{L^2( \wTG)}.
\end{equation*}
Using the identities {, denoting $\n{t}^+ = \n{t}^+(x)$ and $\n{n}^+ = \n{n}^+(x)$, for $x\in \wTG$}
\begin{eqnarray}
D_{\n{t}^+_0} u^\pm_E &=& ( \n{t}^+_{0}\cdot \n{t}^+) D_{{\n{t}}^+} u^\pm_E
+ ( \n{t}_0^+\cdot \n{n}^+) D_{{\n{n}}^+}u^\pm_E ,\\
D_{{\n{t}}^+}  u^+_E& =& D_{{\n{t}}^+}  u^-_E\label{interface1},\\
D_{\n{n}^+} u^+_E &= &\frac{\rho^-}{\rho^+} D_{\n{n}^+} u^-_E\label{interface2},
\end{eqnarray}
we have
\begin{alignat*}{1}
 \|D_{\n{t}^{+}_0} w_T^-\|_{L^2( \wTG)} \,\,& =\,\,     \big\|D_{\n{t}^{+}_0} (J_T u_E^- -u_E^- +  u_E^+ -J_T u_E^+) +
(1 - \frac{\rho^-}{\rho^+}) ( \n{t}_{0}^+\cdot \n{n}^+) D_{{\n{n}}^+} u_E^-
\big\|_{L^2( \wTG)} \\
& \le\,\,
\|D (J_T u_E^- -u_E^-)\|_{L^2(\wTG)} +
\|D (J_T u_E^+ -u_E^+)\|_{L^2(\wTG)} \\
& \quad +
(1 - \frac{\rho^-}{\rho^+}) \|( \n{t}_0^+\cdot \n{n}^+) D u_E^-
\|_{L^2( \wTG)}.
\end{alignat*}
For the first two terms in the previous bound we use a trace inequality to obtain
\begin{equation*}
\|D (J_T u_E^\pm - u_E^\pm)\|_{L^2(\wTG)} \leq C \left(\frac{1}{\sqrt{h_T}}
\|D (J_T u_E^\pm  - u_E^\pm)\|_{L^2(\wT)} + \sqrt{h_T}
\|D^2 (J_T u_E^\pm -u_E^\pm)\|_{L^2(\wT)} \right),
\end{equation*}
and for the third term we use that $|\n{t}_0^-\cdot \n{n}^-| \leq \|\n{X}''\|_{L^{\infty}}|\wTG| = {\mathcal{O}(\kappa h_T)}$
on $ \wTG$, $\rho^- \leq
\rho^+$, and  a
trace inequality to obtain
\begin{equation*}
(1 - \frac{\rho^-}{\rho^+}) \|( \n{t}_0^+\cdot \n{n}^+) D u_E^-
\|_{L^2( \wTG)} \leq C\, {\kappa}  \left(\sqrt{h_T} \|D u_E^-\|_{L^2(\wT)} +
h_T^{3/2} \|D^2 u_E^-\|_{L^2(\wT)}\right).
\end{equation*}
Hence, applying approximation property \eqref{proj} we obtain
\begin{equation} \label{inq:102}
h_T^2  |D_{\n{t}^+_0}w_T^-(x_0)|
\,\,\leq\,\, C {(1+\kappa)} h_T^2\left(\|D u_E^-\|_{L^2(\wT)} +
 \|D^2 u_E^-\|_{L^2(\wT)}) + \|D^2 u_E^+\|_{L^2(\wT)}\right).
\end{equation}
If we combine the inequalities (\ref{inq:101}) and (\ref{inq:102})
with (\ref{boundeq1}), we arrive at the first result of the lemma, inequality \eqref{localcurve1}.

Now we estimate the term in \eqref{boundeq2}. We need
to bound
\begin{equation*}
|D_{\n{n}^{+}_0}w_T^+(x_0)| = | \wTG|^{-1/2} \|D_{\n{n}^{+}_0}w_T^+\|_{L^2( \wTG)}
\leq C h_T^{-1/2}  \|D_{\n{n}^{+}_0}(J_T u_E^+ - \frac{\rho^-}{\rho^+}
J_T u_E^-)\|_{L^2(\wTG)}.
\end{equation*}
Similarly to the previous bound, w{e} note that
\[
D_{\n{n}^+_0} u^\pm_E = ( \n{n}_{0}^+\cdot \n{t}^+) D_{{\n{t}}^+} u^\pm_E
+ ( \n{n}_{0}^+\cdot \n{n}^+) D_{{\n{n}}^+}u^\pm_E,
\]
and then using  \eqref{interface1} and \eqref{interface2} we obtain
\begin{alignat*}{1}
 & \|D_{\n{n}^{+}_{0}} w_T^+\|_{L^2( \wTG)}  =    \big\|D_{\n{n}^{+}_{0}} \big(J_T u_E^+ - u_E^+ - \frac{\rho^-}{\rho^+} (J_T u_E^- -
 u_E^-)\big)+  (1 - \frac{\rho^-}{\rho^+}) ( \n{n}_{0}^+\cdot \n{t}^+) D_{{\n{t}}^+} u_E^+
\big\|_{L^2( \wTG)}.
\end{alignat*}
The remaining of the proof is similar as above and we obtain
\[
h_T^2  |D_{\n{n}^+_0}w_{T}^+(x_0)|
\leq C {(1+\kappa)} h_T^2\left(\frac{\rho^-}{\rho^+} \|D^2 u_E^-\|_{L^2(\wT)}  +
\|D u_E^+\|_{L^2(\wT)} + \|D^2 u_E^+\|_{L^2(\wT)}\right).
\]
If we combine this inequality with (\ref{boundeq2}), we arrive at our second
result (\ref{localcurve2}).
\end{proof}

\section{Coercivity and Continuity of Bilinear Form}
The aim of this section is to prove coercivity and continuity of the bilinear
form $a_h(\cdot,\cdot)$ defined in \eqref{a_h}.

\subsection{Coercivity of Bilinear Form} \label{coercivity}
We define the following energy norm  $\|\cdot\|_V\,:\, V \rightarrow \mathbb{R}^+_{0}$
\begin{alignat}{1} 
\|v\|_V^2 \,\,=\,\,&\|\sqrt{\rho} \nabla_h v\|_{L^2(\Omega)}^2+ \sum_{e \in \EhG} \left(\frac{\rho^-}{|e^-|} \|\llbracket v\rrbracket\|_{L^2(e^-)}^2 + \frac{\rho^+}{|e^+|} \|\llbracket v\rrbracket\|_{L^2(e^+)}^2\right)  \label{defV}
\\
& + \sum_{e \in \EhG} \left(\rho^-|e^-| \|\llbracket \nabla_h  v\rrbracket \|_{L^2(e^-)}^2 + \rho^+|e^+| \|\llbracket \nabla_h v \rrbracket\|_{L^2(e^+)}^2\right) \nonumber.
\end{alignat}

In order to prove coercivity we will need the following lemma.
\begin{lemma} \label{twotri}
Let $e = \mathrm{Int}(\partial T_1\cap\partial T_2)\in\EhG$, for $T_1, T_2\in \Th$. Then, there exists a constant $\theta>0$ such that
\begin{equation*}
| e^\pm |^2\,\, \le\,\, \theta  \max_{i=1,2} |T_i^\pm|.
\end{equation*}
The constant $\theta$ depends on the shape regularity of the triangulation.
\end{lemma}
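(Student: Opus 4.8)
The plan is to prove the statement with the superscript ``$+$'' fixed throughout; the case ``$-$'' is identical after swapping $\Omega^+$ and $\Omega^-$. If $e^+=\emptyset$ there is nothing to prove, so assume $e^+\neq\emptyset$ and set $\ell:=|e^+|$. Since $e$ is an edge of both $T_1$ and $T_2$, shape regularity gives $\ell\le|e|\le\min(h_{T_1},h_{T_2})$ and $h_{T_1}\sim h_{T_2}$. First I would record the structural consequences of the standing assumptions: since $\Gamma$ meets $\partial T_i$ in at most two points lying on two distinct edges, $\Gamma$ crosses each edge of $T_i$ at most once, the set $\gamma_i:=\Gamma\cap\overline{T_i}$ is a single (possibly empty) simple sub-arc of $\Gamma$ which divides $T_i$ into exactly two subdomains one of which is $T_i^+$ (so $T_i^+$ is connected), and $\Gamma$ meets $e$ in at most one point. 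Hence $e^+$ is a sub-segment of $e$ with one endpoint equal to a vertex $V$ of $e$ (the other endpoint being the crossing point $p=\Gamma\cap e$ when it exists, and otherwise $e^+=e$). If some $\gamma_i=\emptyset$, then $e^+\neq\emptyset$ forces $\overline{T_i}\cap\Omega^+\neq\emptyset$, hence $T_i\subset\Omega^+$ and $|T_i^+|=|T_i|\ge c\,h_{T_i}^2\ge c\,\ell^2$, which finishes the proof; so assume from now on that both $\gamma_i\neq\emptyset$.

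Next I would try to locate inside $T_1^+$ or $T_2^+$ a subset of area $\gtrsim\ell^2$. For each $i$ consider the ``corner'' triangle $\tau_i\subset T_i$ with vertex $V$, one side equal to $e^+$, and third side running along the other edge $e_i'$ of $T_i$ incident to $V$, truncated at arclength $\ell_i:=\min(\ell,\,\mathrm{dist}(V,\Gamma\cap e_i'))$ from $V$; shape regularity (angle at $V$ bounded below) gives $|\tau_i|\gtrsim\ell\,\ell_i$. By construction $\Gamma$ can enter $\tau_i$ only across its third side (it meets $e^+$ at most at the vertex $p$ and does not cross the relative interior of the retained part of $e_i'$), so the component $\tau_i^+$ of $\tau_i\setminus\gamma_i$ containing $V$ lies in $\overline{\Omega^+}$, whence $|T_i^+|\ge|\tau_i^+|$, which equals $|\tau_i|$ minus the area $\gamma_i$ cuts off along the third side. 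A short but genuinely case-based computation over the possible endpoints of $\gamma_i$ then yields, for a suitable shape-regularity constant, the dichotomy: \emph{either} $|T_i^+|\ge\theta^{-1}\ell^2$ (this happens when $\ell_i\gtrsim\ell$ and $\gamma_i$ does not cut deeply towards $V$), \emph{or} $\gamma_i$ passes within distance $C\theta^{-1}\ell$ of $V$ (covering $\ell_i$ small and $\gamma_i$ bulging deeply into $\tau_i$). When $e^+=e$ the same dichotomy holds with $V$ replaced by a fixed interior point $w_0$ of $e$ (say its midpoint), arguing directly with the width of the strip $T_i^+$ between $e$ and $\gamma_i$.

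Finally I would argue by contradiction: suppose $\max_i|T_i^+|<\theta^{-1}\ell^2$. Then both $\gamma_1,\gamma_2$ pass within $C\theta^{-1}\ell$ of the common point $V$ (resp.\ $w_0$); pick $x\in\gamma_1,\ y\in\gamma_2$ realizing this, so $|x-y|\le2C\theta^{-1}\ell$. On the other hand $x$ and $y$ are joined along $\Gamma$: one arc goes through $p$ (or, when $e^+=e$, out toward an endpoint of $e$) and therefore has length $\ge2\ell-2C\theta^{-1}\ell$; the complementary arc is no shorter — it either winds (almost) around all of $\Gamma$, whose length is $\ge2\pi r$ by the isoperimetric inequality applied to the disc of radius $r$ that $\Omega^\pm$ must contain, or it winds around $V$ accumulating $\Omega(1)$ of total turning, which by $\kappa h\lesssim1$ forces length $\gtrsim1/\kappa\gtrsim\ell$. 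Hence $d_\Gamma(x,y)\gtrsim\ell$, while $2h<r$ gives $d_\Gamma(x,y)<r$. Since $\Gamma$ is a regular $\mathcal{C}^2$ curve admitting the tubular neighborhood of radius $r$ (Lemma \ref{tubularlemma}) and $\kappa r\le1$, within that tube $\Gamma$ is a Lipschitz graph over its tangent line with slope controlled by $\kappa r$, so $|x-y|\ge c_*\,d_\Gamma(x,y)$ for an absolute constant $c_*>0$ whenever $d_\Gamma(x,y)\le r$. Combining, $2C\theta^{-1}\ell\ge c_*\,d_\Gamma(x,y)\gtrsim c_*\ell$, which is false once $\theta$ is fixed large enough in terms of $C$, $c_*$ and the shape-regularity constants only; note that $r$ and $\kappa$ entered solely through the qualitative facts $2h<r$, $|\Gamma|\ge2\pi r$ and $\kappa h,\kappa r\lesssim1$, so the resulting $\theta$ depends only on the shape regularity of the triangulation. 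This establishes $|e^+|^2=\ell^2\le\theta\max_i|T_i^+|$. The main obstacle is the middle paragraph — showing, uniformly over all ways $\gamma_i$ can cut $T_i$, that the area bound on $T_i^+$ forces $\gamma_i$ to come $O(\theta^{-1}\ell)$-close to $V$ — together with pinning down the quantitative embeddedness estimate $|x-y|\ge c_*d_\Gamma(x,y)$ with a constant independent of $\Gamma$.
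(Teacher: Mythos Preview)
Your approach is different from the paper's and more elaborate than necessary. The paper centers a ball $B=B_{|e^\pm|/2}(m)$ at the \emph{midpoint} $m$ of $e^\pm$ and observes that $B\cap T_i$ contains an isosceles triangle with base $e^\pm$ and base angles $\tilde\alpha=\min(\underline\alpha,\pi/4)$, of area $(|e^\pm|/2)^2\tan\tilde\alpha$. If $\Gamma\cap T_i$ does not enter $B$, that triangle lies entirely in $T_i^\pm$ and the bound is immediate. If $\Gamma\cap T_i$ enters $B$ for \emph{both} $i=1,2$, then one finds two distinct points of $\Gamma$ whose normals both pass through $m$ within distance $|e^\pm|/2<r$, contradicting Lemma~\ref{tubularlemma} directly. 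That is the entire argument --- no corner triangles at a vertex, no dichotomy, no chord--arc comparison.

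Your route can be completed, but the two steps you flag as obstacles are genuine gaps as written. The dichotomy ``$|T_i^+|\ge\theta^{-1}\ell^2$ or $\gamma_i$ comes within $C\theta^{-1}\ell$ of $V$'' requires an honest case split on where the second endpoint of $\gamma_i$ lies and a quantitative control of how far the arc can bulge toward $V$; you assert the outcome without supplying it. For the final contradiction you need $d_\Gamma(x,y)\gtrsim\ell$, i.e.\ \emph{both} arcs joining $x$ and $y$ must be long. Your isoperimetric justification for $|\Gamma|\ge2\pi r$ is not right (the tubular neighborhood does not directly furnish an inscribed disk of radius $r$ in $\Omega^\pm$); the clean argument is total curvature, $2\pi=\bigl|\int_\Gamma\kappa\,ds\bigr|\le\kappa_{\max}|\Gamma|\le|\Gamma|/r$. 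One then also needs the upper bound $A_1\le|\gamma_1|+|\gamma_2|\le4h$ (via Lemma~\ref{TGammalessLT}) to get $A_2=|\Gamma|-A_1\ge(2\pi-2)r\gtrsim\ell$ for the complementary arc. None of this is deep, but it is several extra steps that the paper's midpoint-and-ball argument sidesteps by invoking the tubular neighborhood in its raw form (disjointness of normal segments) rather than through a derived arc-length comparison.
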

\begin{proof}
See Appendix \ref{twotriproof}.
\end{proof}

\begin{lemma}\label{lemma:coercive}(Coercivity)
If $\gamma$ is large enough (depending on shape regularity of triangulation) there exists a constant $c>0$, {independent of $h$, $\rho^-$ and $\rho^+$,} such that
\begin{equation}\label{coercive}
c\|v\|_V^2 \,\,\le\,\,  a_h(v,v), \quad \text{ for all } v\in V_h.
\end{equation}
\end{lemma}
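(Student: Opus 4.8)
The plan is to set $w=v$ in \eqref{a_h}, recognize the bulk term together with the two families of penalty terms as ($\gamma$-weighted) pieces of $\|v\|_V^2$, and absorb the only sign-indefinite contribution — the consistency sum $-2\sum_{e\in\EhG}\int_e\{\rho\nabla_h v\}\cdot\llbracket v\rrbracket$ — by a Young-type inequality. Writing $S_0(v)$ and $S_1(v)$ for the function-jump and flux-jump sums appearing in \eqref{defV}, one has
\begin{equation*}
a_h(v,v)=\|\sqrt\rho\,\nabla_h v\|_{L^2(\Omega)}^2-2\sum_{e\in\EhG}\int_e\{\rho\nabla_h v\}\cdot\llbracket v\rrbracket+\gamma\,S_0(v)+S_1(v),
\end{equation*}
so everything reduces to bounding, for a single edge $e=\mathrm{Int}(\partial T_1\cap\partial T_2)\in\EhG$, the quantity $\big|\int_e\{\rho\nabla_h v\}\cdot\llbracket v\rrbracket\big|$ by a small multiple of $\|\sqrt\rho\,\nabla_h v\|^2$ over $T_1\cup T_2$ plus the $e$-local parts of $S_0$ and $S_1$, with constants independent of $\rho^\pm$ and of how $\Gamma$ cuts the two elements.

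The core estimate is carried out separately on $e^+$ and on $e^-$; these are symmetric, so I describe $e^+$. On $e^+$ one has $\rho\equiv\rho^+$ and $\llbracket v\rrbracket=(v|_{T_1^+}-v|_{T_2^+})\,\n{n}_1$, so that $\{\rho\nabla_h v\}\cdot\llbracket v\rrbracket=\tfrac{\rho^+}{2}\big((\nabla v|_{T_1^+}+\nabla v|_{T_2^+})\cdot\n{n}_1\big)\,(v|_{T_1^+}-v|_{T_2^+})$: only the normal component of the average flux enters, and no quotient $\rho^+/\rho^-$ ever appears. By Lemma \ref{twotri}, one of the sub-elements — say $T_1^+$ — satisfies $|T_1^+|\ge|e^+|^2/\theta$; I then use the splitting $(\nabla v|_{T_1^+}+\nabla v|_{T_2^+})\cdot\n{n}_1=2\,\nabla v|_{T_1^+}\cdot\n{n}_1-\llbracket\nabla_h v\rrbracket|_{e^+}$. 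Since $\nabla v|_{T_1^+}$ is constant, $\|\nabla v|_{T_1^+}\cdot\n{n}_1\|_{L^2(e^+)}^2\le\tfrac{|e^+|}{|T_1^+|}\|\nabla v|_{T_1^+}\|_{L^2(T_1^+)}^2\le\tfrac{\theta}{|e^+|}\|\nabla v|_{T_1^+}\|_{L^2(T_1^+)}^2$, which is precisely the scaling that converts the $e^+$-integral of the first term into bulk and penalty norms; a Cauchy--Schwarz/Young argument then yields, for arbitrary $\delta_1,\delta_2>0$,
\begin{equation*}
\Big|\int_{e^+}\{\rho\nabla_h v\}\cdot\llbracket v\rrbracket\Big|\le\tfrac{\delta_1}{2}\,\rho^+\|\nabla v|_{T_1^+}\|_{L^2(T_1^+)}^2+\Big(\tfrac{\theta}{2\delta_1}+\tfrac1{4\delta_2}\Big)\tfrac{\rho^+}{|e^+|}\|\llbracket v\rrbracket\|_{L^2(e^+)}^2+\tfrac{\delta_2}{4}\,\rho^+|e^+|\,\|\llbracket\nabla_h v\rrbracket\|_{L^2(e^+)}^2,
\end{equation*}
with constants depending only on the shape-regularity constant $\theta$. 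The role of the flux-jump penalty is exactly here: the contribution of the possibly thin element $T_2^+$ has been rewritten through $\llbracket\nabla_h v\rrbracket|_{e^+}$ and is controlled by $S_1$, instead of by an inverse inequality on $T_2^+$ that would blow up.

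Finally I would sum over $e\in\EhG$. Since each element is the ``fat'' sub-element for at most its three edges, $\sum_{e}\rho^+\|\nabla v|_{T_1^+}\|_{L^2(T_1^+)}^2\le 3\|\sqrt\rho\,\nabla_h v\|_{L^2(\Omega)}^2$ and likewise on the minus side, while the remaining contributions assemble into $S_0(v)$ and $S_1(v)$. This gives
\begin{equation*}
a_h(v,v)\ge(1-6\delta_1)\,\|\sqrt\rho\,\nabla_h v\|_{L^2(\Omega)}^2+\Big(\gamma-\tfrac{\theta}{\delta_1}-\tfrac1{2\delta_2}\Big)S_0(v)+\Big(1-\tfrac{\delta_2}{2}\Big)S_1(v).
\end{equation*}
Choosing $\delta_1,\delta_2$ small and fixed (e.g.\ $\delta_1=1/12$, $\delta_2=1$) and then $\gamma$ large enough — tracking the constants, $\gamma\ge 12\theta+1$ suffices, so $\gamma$ depends only on shape regularity — makes all three coefficients at least $1/2$, which is \eqref{coercive} with $c=1/2$. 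The main obstacle is the single-edge estimate of the previous paragraph: one must resist applying an inverse inequality on whichever sub-element $T_i^\pm$ happens to be small, and instead use Lemma \ref{twotri} to transfer the estimate onto the fat neighbour while paying for the mismatch with the flux-jump term, all without ever comparing $\rho^+$ with $\rho^-$.
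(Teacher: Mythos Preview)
Your argument is correct and follows essentially the same route as the paper: split each edge integral into $e^+$ and $e^-$, invoke Lemma~\ref{twotri} to select the ``fat'' sub-element, rewrite the average normal flux as twice the fat-side normal derivative minus the flux jump, and then apply Young's inequality before summing and choosing $\gamma$ large. The only cosmetic differences are your explicit bookkeeping of the constants $\delta_1,\delta_2$ and the overlap factor, whereas the paper fixes $\epsilon=1/2$ directly.
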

\begin{proof}
Let $v \in V_h$, then
\begin{alignat*}{1}
a_h(v, v)\,\,=&\,\, \|\sqrt{\rho} \nabla_h v\|_{L^2(\Omega)}^2 -2 \sum_{e \in \EhG}  \int_e \{\rho \nabla_h v \}\cdot \llbracket v \rrbracket \\
& +  \sum_{e \in \EhG} \left(\frac{\gamma}{|e^-|}  \rho^- \| \llbracket v\rrbracket \|_{L^2(e^-)}^2+\frac{\gamma}{|e^+|}  \rho^+ \| \llbracket v\rrbracket \|_{L^2(e^+)} ^2\right)   \\
& +\sum_{e \in \EhG} \left(\rho^-|e^-| \| \llbracket \nabla_h  v\rrbracket \|_{L^2(e^-)}^2 + \rho^+|e^+| \| \llbracket \nabla_h  v\rrbracket \|_{L^2(e^+)}^2\right).
\end{alignat*}
To prove the lemma, it is enough to bound the non-symmetric  term
\begin{equation*}
\int_e \{\rho \nabla_h v \} \cdot \llbracket v \rrbracket = \int_{e^-} \{\rho \nabla_h v \}\cdot \llbracket v \rrbracket+ \int_{e^+} \{\rho \nabla_h v \} \cdot\llbracket v \rrbracket.
\end{equation*}

Let $T_1, T_2 \in \mathcal{T}_h$ be such that $e= \mathrm{Int}(\partial {T}_1 \cap \partial {T}_2)$ and set $T_e=T_1 \cup T_2$. Without loss of generality assume $\displaystyle \revblue{|T_2^-|}=\max_{i=1,2} |T_i^-|$. Then, according to Lemma \ref{twotri}
\begin{equation}\label{edge}
|e^-|^2 \le \theta  |T_2^-|.
\end{equation}
Then, we see that
\begin{alignat*}{1}
 \int_{e^-} \{\rho \nabla v \} \cdot \llbracket v \rrbracket\,\,=&\,\,\int_{e^-} \frac{\rho^-}{2} \Big( (D_{\n{n}_1} v)\,  (v|_{T_1}-v|_{T_2})
 + (D_{\n{n}_2} v) \,( v|_{T_2}-v|_{T_1})\Big) \\
=& \int_{e^-} \rho^- (D_{\n{n}_2} v)\,  (v|_{T_2}-v|_{T_1})
 + \frac{\rho^-}{2} (D_{\n{n}_1}v + D_{\n{n}_2}v)  ( v|_{T_1}-v|_{T_2}) \\
=& \int_{e^-} \rho^- (D_{\n{n}_2} v)\,  (v|_{T_2}-v|_{T_1})
 + \frac{\rho^-}{2} (\llbracket \nabla_{h} v \rrbracket)\,  ( v|_{T_1}-v|_{T_2}).
\end{alignat*}
Using the fact that $v$ is a linear function on  $T_2^-$  and using \eqref{edge} we have

\begin{equation}\label{Dn1bound}
\big|2\int_{e^-} \rho^- (D_{\n{n}_2} v)\,  (v|_{T_2}-v|_{T_1})\big|  \le {C}\, \|\rho^{-} {D} v\|_{L^2(T_2^-)} \frac{1}{|e^-|^{1/2}} \|\llbracket v \rrbracket\|_{L^2(e^-)},
\end{equation}
where ${C}$ depends on $\theta$. Therefore, we have
\begin{equation*}
\big|2\int_{e^-} \rho^- (D_{\n{n}_2} v)\,  (v|_{T_2}-v|_{T_1})\big|  \le \epsilon \rho^{-}\| {\nabla_{h}} v\|_{L^2(T_e^-)}^2  + \frac{\rho^- {C}^2}{ \epsilon |e^-|} \|\llbracket v \rrbracket \|_{L^2(e^-)}^2,
\end{equation*}
for any $\epsilon >0$. Furthermore, we have
\begin{equation*}
 \big|\int_{e^-} {\rho^-} (\llbracket \nabla_h v \rrbracket)\, ( v|_{T_1}-v|_{T_2})\big|  \le  \epsilon \rho^- |e^-| \| \llbracket \nabla_h v\rrbracket \|_{L^2(e^-)}^2  + \frac{\rho^-}{\epsilon |e^-|} \|\llbracket v \rrbracket\|_{L^2(e^-)}^2.
\end{equation*}
Collecting the last two estimates gives
\begin{equation*}
\big|2\int_{e^-} \{\rho \nabla_h v \} \cdot \llbracket v \rrbracket \big| \le \epsilon \rho^- (|e^-| \| \llbracket \nabla_h v \rrbracket \|_{L^2(e^-)}^2+ \| \nabla_h v\|_{L^2(T_e^-)}^2)  + \frac{({C}^2+1)\rho^-}{\epsilon |e^-|} \|\llbracket v \rrbracket \|_{L^2(e^-)}^2.
\end{equation*}
Likewise, we can bound the integral over $e^+$ to get a combined result
\begin{alignat*}{1}
\big|2\int_{e} \{\rho \nabla_h v \}\cdot \llbracket v \rrbracket\big| \le &\epsilon  (\|\sqrt{\rho} \nabla_h v\|_{L^2(T_e)}^2+  \rho^{-} |e^-| \| \llbracket \nabla_h v \rrbracket \|_{L^2(e^-)}^2+\rho^+ |e^+| \| \llbracket \nabla_h v \rrbracket \|_{L^2(e^+)}^2) \\
 & + \frac{({C}^2+1)\rho^-}{\epsilon |e^-|} \|\llbracket v \rrbracket\|_{L^2(e^-)}^2+  \frac{({C}^2+1) \rho^+ }{\epsilon |e^+|} \|\llbracket v \rrbracket\|_{L^2(e^+)}^2.
\end{alignat*}
Summing over all edges $e\in \EhG$ we get
\begin{alignat*}{1}
\big|2 \sum_{e \in \EhG}  \int_e \{\rho \nabla_h v \} \cdot\llbracket v \rrbracket\big| \le & \sum_{e \in \EhG} \epsilon  (\| \sqrt{\rho} \nabla_h v\|_{L^2(T_e)}^2+  \rho^- |e^-| \| \llbracket \nabla_h  v\rrbracket \|_{L^2(e^-)}^2+\rho^+ |e^+| \| \llbracket \nabla_h v \rrbracket \|_{L^2(e^+)}^2) \\
&  +\sum_{e \in \EhG} \frac{({C}^2+1) \rho^-}{\epsilon |e^-|} \|\llbracket v \rrbracket \|_{L^2(e^-)}^2+  \frac{({C}^2+1)\rho^+}{\epsilon |e^+|} \|\llbracket v \rrbracket \|_{L^2(e^+)}^2.
\end{alignat*}
{Finally,} the result follows by choosing $\epsilon =1/2$ and then choosing $\gamma=\frac{({C}^2+1)}{\epsilon}+1${.}
\end{proof}

\subsection{Continuity of bilinear form} \label{continuitybin}

Next we prove continuity of the bilinear form. To do this we define the  augmented norm
\begin{equation} \label{augmentednorm}
\|v\|_{W}^2\,\,=\,\,\|v\|_V^2+ \sum_{{e \in \EhG}}   (\rho^- \|\{ \nabla_h v\} \cdot \n{n}\|_{L^2(e^-)}^2| e^- | +  \rho^+ \|\{ \nabla_h v\} \cdot \n{n}\|_{L^2(e^+)}^2 |e^+ |).
\end{equation}

\begin{lemma} (Continuity) \label{lemma:continuity}
Suppose that ${v,\, w\in V}$. Then, there exists a constant {$C>0$, independent of $h$, $\rho^-$ and $\rho^+$,} such that
\begin{equation*}
a_h(w,v)\,\, \le\,\,  C \, \|w\|_W  \,  \|v\|_W.
\end{equation*}
Additionally, if $w \in V$ and $v \in V_h$ we have
\begin{equation}\label{continuity}
a_h(w,v) \,\,\le\,\,  C \, \|w\|_W  \,  \|v\|_V.
\end{equation}
\end{lemma}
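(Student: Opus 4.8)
The plan is to expand $a_h(w,v)$ into its three structural pieces as in \eqref{a_h}---the volume term $\int_\Omega \rho \nabla_h w \cdot \nabla_h v$, the consistency (Nitsche) terms $-\sum_e \int_e (\{\rho\nabla_h v\}\cdot\llbracket w\rrbracket + \{\rho\nabla_h w\}\cdot\llbracket v\rrbracket)$, and the two penalty sums (the jump-of-function penalty with weight $\gamma/|e^\pm|$ and the flux-jump stabilization with weight $|e^\pm|$)---and bound each piece by a product of seminorms that are controlled by $\|w\|_W$ and $\|v\|_W$. The volume term is handled directly by Cauchy--Schwarz: $\int_\Omega \rho\nabla_h w\cdot\nabla_h v \le \|\sqrt{\rho}\nabla_h w\|_{L^2(\Omega)}\|\sqrt{\rho}\nabla_h v\|_{L^2(\Omega)}$, and each factor is $\le \|w\|_V \le \|w\|_W$ (resp. for $v$). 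The two penalty sums are likewise immediate from Cauchy--Schwarz on each edge: e.g. $\frac{\gamma}{|e^-|}\int_{e^-}\rho^- \llbracket w\rrbracket\cdot\llbracket v\rrbracket \le \gamma \big(\frac{\rho^-}{|e^-|}\|\llbracket w\rrbracket\|_{L^2(e^-)}^2\big)^{1/2}\big(\frac{\rho^-}{|e^-|}\|\llbracket v\rrbracket\|_{L^2(e^-)}^2\big)^{1/2}$, and summing over $e\in\EhG$ and using the Cauchy--Schwarz inequality for sums gives $\le \gamma\|w\|_V\|v\|_V$; the flux-jump penalty with weight $|e^\pm|$ is treated identically against the corresponding $\rho^\pm|e^\pm|\|\llbracket\nabla_h\cdot\rrbracket\|_{L^2(e^\pm)}^2$ contributions in $\|\cdot\|_V$.

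The only term that genuinely needs the augmented norm is the consistency term. Consider $\sum_{e\in\EhG}\int_{e^-}\{\rho\nabla_h v\}\cdot\llbracket w\rrbracket$. Writing $\{\rho\nabla_h v\}\cdot\llbracket w\rrbracket = \rho^-\{\nabla_h v\}\cdot\n{n}\,(\text{scalar jump of }w)$ on $e^-$ (with an appropriate orientation of $\n{n}$), apply Cauchy--Schwarz on $e^-$ after inserting the balancing factors $|e^-|^{1/2}$ and $|e^-|^{-1/2}$:
\begin{equation*}
\Big|\int_{e^-}\{\rho\nabla_h v\}\cdot\llbracket w\rrbracket\Big| \le \big(\rho^-|e^-|\,\|\{\nabla_h v\}\cdot\n{n}\|_{L^2(e^-)}^2\big)^{1/2}\Big(\frac{\rho^-}{|e^-|}\|\llbracket w\rrbracket\|_{L^2(e^-)}^2\Big)^{1/2}.
\end{equation*}
The first factor is exactly one of the extra terms defining $\|v\|_W^2$ in \eqref{augmentednorm}, and the second is one of the terms in $\|w\|_V^2 \le \|w\|_W^2$. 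Summing over $e\in\EhG$ and applying Cauchy--Schwarz for sums yields $\le \|v\|_W\|w\|_W$. The same argument handles the $e^+$ part and the symmetric term $\{\rho\nabla_h w\}\cdot\llbracket v\rrbracket$. Collecting all bounds proves $a_h(w,v)\le C\|w\|_W\|v\|_W$ with $C$ depending only on $\gamma$ (hence on shape regularity), and in particular independent of $h$, $\rho^-$, $\rho^+$.

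For the second, sharper statement with $w\in V$ but $v\in V_h$, the point is that for $v\in V_h$ the augmented part of $\|v\|_W$ is controlled by $\|v\|_V$: on each edge $e\in\EhG$, $\{\nabla_h v\}$ is an average of two constant vectors (since $v$ is piecewise linear on each sub-triangle), so $\rho^\pm|e^\pm|\|\{\nabla_h v\}\cdot\n{n}\|_{L^2(e^\pm)}^2$ can be bounded, via an inverse/scaling estimate using Lemma~\ref{twotri} to compare $|e^\pm|^2$ with $\max_i|T_i^\pm|$, by a constant times $\rho^\pm\|\nabla_h v\|_{L^2(T_e^\pm)}^2$; summing over edges (with bounded overlap from shape regularity) gives $\|v\|_W \le C\|v\|_V$ for $v\in V_h$. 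Substituting this into the first estimate gives \eqref{continuity}. The main obstacle is precisely this last reduction $\|v\|_W\lesssim\|v\|_V$ on $V_h$: one must make sure the inverse estimate on the possibly very thin sub-triangle $T_i^\pm$ does not introduce a constant depending on how $\Gamma$ cuts the element, and this is exactly where Lemma~\ref{twotri} (the shape-regularity-only bound $|e^\pm|^2\le\theta\max_i|T_i^\pm|$) is invoked so that the scaling is benign regardless of the cut.
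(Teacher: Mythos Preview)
Your argument for the first inequality $a_h(w,v)\le C\|w\|_W\|v\|_W$ is correct and matches the paper's approach exactly: Cauchy--Schwarz on the volume term and on both penalty sums, and weighted Cauchy--Schwarz on the consistency term so that the factor $\big(\rho^\pm|e^\pm|\,\|\{\nabla_h v\}\cdot\n{n}\|_{L^2(e^\pm)}^2\big)^{1/2}$ lands in the augmented norm.

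For the second inequality there is a genuine gap in your reduction $\|v\|_W\le C\|v\|_V$ on $V_h$. You claim that $\rho^\pm|e^\pm|\,\|\{\nabla_h v\}\cdot\n{n}\|_{L^2(e^\pm)}^2$ is bounded by $C\rho^\pm\|\nabla_h v\|_{L^2(T_e^\pm)}^2$ via Lemma~\ref{twotri}. But Lemma~\ref{twotri} only gives $|e^\pm|^2\le\theta\,\max_{i=1,2}|T_i^\pm|$, i.e.\ the scaling works for \emph{one} of the two triangles, not both. If, say, $|T_2^\pm|$ realises the maximum, then $|e^\pm|^2|\nabla v|_{T_2}|^2\le\theta\|\nabla v\|_{L^2(T_2^\pm)}^2$ is fine, but $|e^\pm|^2|\nabla v|_{T_1}|^2$ cannot be controlled by $\|\nabla v\|_{L^2(T_1^\pm)}^2$ when $|T_1^\pm|$ is tiny. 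The fix---and this is precisely what the paper does---is to write $\{\nabla_h v\}\cdot\n{n}=\nabla v|_{T_2}\cdot\n{n}+\tfrac12\llbracket\nabla_h v\rrbracket$ so that the ``bad'' triangle's contribution is absorbed into the flux-jump term:
\[
\sqrt{|e^\pm|}\sqrt{\rho^\pm}\,\|\{\nabla_h v\}\cdot\n{n}\|_{L^2(e^\pm)}
\le \sqrt{|e^\pm|}\sqrt{\rho^\pm}\,\|\llbracket\nabla_h v\rrbracket\|_{L^2(e^\pm)}
+ C\sqrt{\rho^\pm}\,\|\nabla_h v\|_{L^2(T_2^\pm)}.
\]
Both pieces on the right are now in $\|v\|_V$, and your conclusion $\|v\|_W\le C\|v\|_V$ follows. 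So the flux-jump stabilisation is not optional in this step: it is exactly what lets you transfer the estimate from the thin sub-triangle to its neighbour.
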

\begin{proof}
We give a sketch of the proof by bounding  each term of $a_h(\cdot,\cdot)$ in
\eqref{a_h} separately. The first term can easily be bounded by Cauchy-Schwarz inequality
\begin{equation*}
  \int_{\Omega} \rho \,\nabla_h v \cdot \nabla_h w \,\,\leq \,\, \|\sqrt{\rho} \nabla_h v \|_{L^2(\Omega)} \, \|\sqrt{\rho} \nabla_h w\|_{L^2(\Omega)}.
\end{equation*}
{The first part of the second term} can be written as
\begin{equation*}
\int_e \left\{ \rho\nabla_{{h}} v \right\} \cdot\llbracket w \rrbracket \,\, = \,\, \int_{e^-} \left\{ \rho^{-}\nabla_{{h}} v \right\} \cdot \llbracket w \rrbracket + \int_{e^+} \left\{ \rho^{+}\nabla_{{h}} v \right\} \cdot \llbracket w \rrbracket.
\end{equation*}
Using the Cauchy-Schwarz inequality one has
\begin{alignat*}{1}
&\int_{e^\pm} \left\{ \rho^{\pm}\nabla_{{h}} v \right\} \llbracket w \rrbracket  \,\, \leq \,\,\left( \sqrt{| e^\pm|}  \sqrt{\rho^\pm} \|\{ \nabla_{{h}} v\} \cdot \n{n}\|_{L^2(e^\pm)}  \right)  \frac{\sqrt{\rho^\pm}}{\sqrt{| e^\pm |}} \| \llbracket w\rrbracket \|_{L^2(e^\pm)}.
\end{alignat*}
Let $e=\mathrm{Int}(\partial T_1\cap\partial T_2)$. If $v \in V_h$ then $\nabla_{{h}} v|_{T_i^\pm}$  is constant for each $i=1,2$. Then,  we can use Lemma \ref{twotri} to get
\begin{equation*}
\sqrt{| e^\pm|}  \sqrt{\rho^\pm} \|\{ \nabla_h v\} \cdot \n{n}\|_{L^2(e^\pm)} \,\,\le\,\,  \sqrt{| e^\pm|} \sqrt{\rho^\pm} \|  \llbracket  \nabla_h v \rrbracket   \|_{L^2(e^\pm)} + C \sqrt{\rho^\pm} (\| \nabla_h v\|_{L^2(T_1^\pm)}+ \| \nabla_h v\|_{L^2(T_2^\pm)})
\end{equation*}
where $C$ here depends on $\theta$. The third term can be bounded by
\begin{equation*}
\int_{e^\pm} \rho ^{\pm} \llbracket  w \rrbracket \cdot \llbracket  v \rrbracket \,\, \leq \,\, \rho^{\pm} \| \llbracket w\rrbracket \|_{L^2(e^\pm)}  \| \llbracket v\rrbracket \|_{L^2(e^\pm)}.
\end{equation*}
Finally, the fourth term can be bounded by Cauchy-Schwarz inequality
\begin{equation*}
\int_{e^\pm} \rho ^{\pm} \llbracket \nabla_h w \rrbracket \, \llbracket \nabla_h v \rrbracket \,\, \leq \,\, \rho^{\pm} \| \llbracket \nabla_h v\rrbracket \|_{L^2(e^\pm)}  \| \llbracket \nabla_h w\rrbracket \|_{L^2(e^\pm)}.
\end{equation*}
The proof is complete summing over the edges, using finite overlapping of the elements associated to the edges and using arithmetic-geometric mean inequality.
\end{proof}

\section{A priori error estimates} \label{apriori}
The purpose of this section is to prove a priori error estimates for the method defined in \eqref{fem}-\eqref{a_h} in the energy and $L^2$ norm.

\subsection{Energy error estimates}

\begin{theorem} \label{maintheorem} (A priori energy error estimate)
Let $u$ be the solution to problem \eqref{Problem} and $u_h \in V_h$ be its finite element approximation solution of \eqref{fem}. Then, there exists $C>0$, independent of $h$,  $\rho^-$ and $\rho^+$, such that
\begin{equation*}\label{mainerror}
\|u-u_h\|_V  \le C\, h (1+\kappa) \left(\sqrt{\rho^-}(||Du||_{L^2 (\Omega^-)}+ \|D^2u\|_{L^2(\Omega^-)})+
  \sqrt{\rho^+} ( || D u||_{L^2(\Omega^+)} + || D^2 u||_{L^2(\Omega^+)})  \right).
\end{equation*}
{The constant $C$ depends on $\Ce$ from \eqref{extension}}.
\end{theorem}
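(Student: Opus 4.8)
The plan is to run the standard coercivity--consistency argument for nonconforming finite elements, but entirely in the augmented norm $\|\cdot\|_W$, so that no step introduces a factor of the contrast $\rho^+/\rho^-$. Write $\mathcal{R}(u):=\sqrt{\rho^-}(\|Du\|_{L^2(\Omega^-)}+\|D^2u\|_{L^2(\Omega^-)})+\sqrt{\rho^+}(\|Du\|_{L^2(\Omega^+)}+\|D^2u\|_{L^2(\Omega^+)})$ for the quantity on the right, and set $e_h:=I_hu-u_h\in V_h$, with $I_h$ the interpolant of Definition~\ref{Definition1}. Since $u_h$ solves \eqref{fem}, $a_h(u_h,e_h)=(f,e_h)$, so Lemma~\ref{lemma:coercive} gives
\begin{equation*}
c\,\|e_h\|_V^2\;\le\;a_h(e_h,e_h)\;=\;a_h(I_hu-u,\,e_h)\;+\;\big(a_h(u,e_h)-(f,e_h)\big).
\end{equation*}
The first (approximation) term I would bound via the continuity estimate \eqref{continuity} by $C\|I_hu-u\|_W\,\|e_h\|_V$; the second is the consistency error. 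If both are shown to be at most $C\,h(1+\kappa)\,\mathcal{R}(u)\,\|e_h\|_V$, one power of $\|e_h\|_V$ cancels and the theorem follows from $\|u-u_h\|_V\le\|u-I_hu\|_V+\|e_h\|_V\le\|u-I_hu\|_W+\|e_h\|_V$, the last step because $\|\cdot\|_V\le\|\cdot\|_W$.

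For the consistency error I would first derive a clean identity. Integrating by parts on each $T^\pm$, and using $-\rho^\pm\Delta u^\pm=f^\pm$, the identities $\llbracket u\rrbracket=0$, $\llbracket\nabla_h u\rrbracket=0$, $\{\rho\nabla_h u\}=\rho\nabla u$ on every edge (valid because $u^\pm\in H^2(\Omega^\pm)$ and $[u]=0$), and the flux condition $[\rho D_{\n{n}}u]=0$ on $\Gamma$, all penalty terms and all edge terms carrying $u$ cancel, and one is left with
\begin{equation*}
a_h(u,v)-(f,v)\;=\;\sum_{T\in\ThG}\int_{\TG}\rho^+\,D_{\n{n}^+}u^+\,\big(v^+-v^-\big),\qquad v\in V_h,
\end{equation*}
where $v^\pm:=v|_{T^\pm}$ and boundary edges contribute nothing since $v$ vanishes on $\partial\Omega$. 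Because $v|_T\in S^1(T)$, the linear function $v^+-v^-$ and its tangential derivative vanish at $x_0$, so on $\TG$ it equals $D_{\n{n}^+_0}(v^+-v^-)(x_0)$ times the signed distance to the tangent line $L_T$, and this distance is $\mathcal{O}(\kappa h_T^2)$ on $\TG$ since $\Gamma$ is $\mathcal{C}^2$ with curvature $\le\kappa$ and $x_0$ is the arc-length midpoint. By \eqref{eqUpsilon3}, $\rho^-D_{\n{n}^+_0}v^-(x_0)=\rho^+D_{\n{n}^+_0}v^+(x_0)$, which yields the contrast-free estimate $|D_{\n{n}^+_0}(v^+-v^-)(x_0)|\le|D_{\n{n}^+_0}v^-(x_0)|\le|\nabla v|_{T^-}|$, at the price of involving $\nabla v$ on $T^-$. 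Replacing $\rho^+D_{\n{n}^+}u^+$ by $-\rho^-D_{\n{n}^-}u^-$ and using a trace inequality on the patch $\wT$ to estimate $\sqrt{\rho^-}\|D_{\n{n}^-}u^-\|_{L^2(\TG)}\le C\sqrt{\rho^-}\big(h_T^{-1/2}\|Du_E^-\|_{L^2(\wT)}+h_T^{1/2}\|D^2u_E^-\|_{L^2(\wT)}\big)$, the one remaining factor to control is $\sqrt{\rho^-}\,|\nabla v|_{T^-}|$. When $T^-$ is a thin sliver this is \emph{not} controlled by $\|v\|_V$ restricted to $T$, and this is exactly where the flux-jump penalty terms of $\|v\|_V$ enter: along a short chain of neighbouring elements whose length is bounded using Assumption~\ref{Assumption} together with Lemmas~\ref{twotri}, \ref{compare}, \ref{TGammalessLT}, one transfers the constant gradient $\nabla v^-$ to an element whose intersection with $\Omega^-$ has area comparable to $h^2$, on which $\sqrt{\rho^-}\,|\nabla v|\le C\,h^{-1}\|v\|_V$. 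Summing over $\ThG$ with Cauchy--Schwarz, the finite overlap of the patches, and the extension bound \eqref{extension}, gives $|a_h(u,v)-(f,v)|\le C\,\Ce\,h(1+\kappa)\,\sqrt{\rho^-}(\|Du\|_{L^2(\Omega^-)}+\|D^2u\|_{L^2(\Omega^-)})\,\|v\|_V\le C\,\Ce\,h(1+\kappa)\,\mathcal{R}(u)\,\|v\|_V$.

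For the approximation error I would estimate $\|u-I_hu\|_W$ piece by piece. On $T\notin\ThG$, $I_hu$ is the (modified) Scott--Zhang interpolant, so $\|D^j(u-I_hu)\|_{L^2(T)}\le C\,h_T^{2-j}\|D^2u\|_{L^2(\wT)}$, with no edge contribution since the edges of such $T$ lie in $\EhGn$, where $u-I_hu$ is continuous. On $T\in\ThG$, the bulk terms of $\|u-I_hu\|_W$ are precisely the left-hand sides of \eqref{localcurve1}--\eqref{localcurve2}, weighted by $\sqrt{\rho^-}$ and $\sqrt{\rho^+}$ respectively (the coefficient $\rho^-/\rho^+$ in \eqref{localcurve2} is harmless, since $\sqrt{\rho^+}\cdot\rho^-/\rho^+=\sqrt{\rho^-}\sqrt{\rho^-/\rho^+}\le\sqrt{\rho^-}$). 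For the jump, average-flux and flux-jump edge terms over $e\in\EhG$ I would use $\llbracket u\rrbracket=\llbracket\nabla_h u\rrbracket=0$ and $\llbracket u_E^\pm\rrbracket=\llbracket\nabla_h u_E^\pm\rrbracket=0$ (as $u_E^\pm\in H^2(\Omega)$) to reduce everything on $e^\pm$ to $I_T^\pm u-u_E^\pm$, and then control this by trace, inverse and one-dimensional Sobolev inequalities on the patches, using \eqref{localcurve1}--\eqref{localcurve2} and the geometric fact (a consequence of Assumption~\ref{Assumption} and the geometric lemmas) that $|\wT^\pm|$ is comparable to $h_T^2$, so that a linear polynomial which is $\mathcal{O}(h_T^2)$ on $\wT^\pm$ is $\mathcal{O}(h_T^2)$ on all of $\wT$. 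The $h_T^2$ decay survives the weights $|e^\pm|^{-1}$ and $|e^\pm|$: for the former one passes to an $L^\infty$ bound on $e^\pm$ so that $|e^\pm|$ cancels, and for the latter one uses $|e^\pm|\le C h_T$. Summing over $T\in\ThG$ with finite overlap and using \eqref{extension} gives $\|u-I_hu\|_W\le C\,\Ce\,h(1+\kappa)\,\mathcal{R}(u)$, which together with the consistency bound completes the proof.

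The step I expect to be the main obstacle is the contrast-independence of the consistency error: the only bound on $v^+-v^-$ free of $\rho^+/\rho^-$ forces $\nabla v$ onto the sub-triangle $T^-$, which may be arbitrarily thin, so that control by $\|v\|_V$ can be recovered only through the flux-jump stabilization and a careful geometric chaining argument — this is precisely why that stabilization is built into $a_h$ and into $\|\cdot\|_V$. A related subtlety is that \eqref{continuity} holds only with the augmented norm $\|\cdot\|_W$ on its first argument, so the approximation error genuinely has to be estimated in $\|\cdot\|_W$, which is the reason the average-flux terms appear in the definition of $W$.
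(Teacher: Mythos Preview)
Your proposal is correct and follows the paper's route: coercivity (Lemma~\ref{lemma:coercive}) and continuity in the augmented norm (Lemma~\ref{lemma:continuity}) reduce the energy error to the $W$--norm approximation error (the paper's Lemma~\ref{normestimate}, proved from Lemma~\ref{localcurve} together with the trace--to--patch Lemma~\ref{smalltolarge}) and the consistency error (the paper's Lemma~\ref{inconsistency}, whose core is exactly the tangent--line vanishing of $[d_h]$ and the geometric chaining through Lemmas~\ref{compare} and~\ref{twotri} that you outline).

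The one variation worth recording is in the consistency step. You exploit the matching condition \eqref{eqUpsilon3} to get $|D_{\n{n}^+_0}(v^+-v^-)(x_0)|=(1-\rho^-/\rho^+)\,|D_{\n{n}^+_0}v^-(x_0)|\le|\nabla v|_{T^-}|$, so the chaining argument needs to run only on the $\Omega^-$ side. The paper instead applies the triangle inequality $|D[d_h]|\le|\nabla d_h^-|+|\nabla d_h^+|$ and carries out the chain on both sides, switching to $\sqrt{\rho^+}$ on the $+$ side via $\rho^-\le\rho^+$; this is why its $M_T$ contains both $e^+$-- and $e^-$--terms. Your shortcut is slightly more economical and uses the same lemmas. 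For the approximation error your sketch via an $L^\infty$ bound on $e^\pm$ and the fact $|\wT^\pm|\sim h_T^2$ heads in the right direction, but the paper's Lemma~\ref{smalltolarge} is exactly the clean device that turns the dangerous weight $1/|e^\pm|$ into $1/h_T^2$ without isolating a polynomial part, and you should invoke it rather than the ad hoc $L^\infty$ step.
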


\begin{proof}
Note that $\|u - u_h\|_V \leq \|u - I_h u\|_V + \|I_h u-u_h\|_V$.
Since $d_h:=I_h u-u_h \in V_h$, coercivity of $a_h(\cdot,\cdot)$ (see \eqref{coercive}) gives
\begin{equation}\label{aux4}
c\|d_h\|_{V}^2 \,\,\le\,\, a_h(d_h, d_h)\,\,=\,\,a_h(I_h u-u, d_h)+ a_h(u-u_h, d_h).
\end{equation}
Using \eqref{continuity} we obtain, for $\epsilon>0$
\begin{equation}\label{aux3}
a_h(I_h u-u, d_h)\,\, \le\,\, C \, \|I_h u-u\|_{W} \|d_h\|_{V} \le C \left(\frac{\epsilon}{2} \|d_h\|_{V}^2+ \frac{1}{2\epsilon}\|I_h u-u\|_{W}^2\right).
\end{equation}
Choosing $\epsilon$ sufficiently small we have
\begin{equation}\label{aux100}
\|d_h\|_{V}^2 \,\,\le\,\, C \, \|I_h u-u\|_{W}^2+  a_h(u-u_h, d_h).
\end{equation}
The proof of the theorem follows by checking {below} Lemma \ref{normestimate}
(approximation error in $W$-norm) and Lemma \ref{inconsistency} (inconsistency error).
\end{proof}

We next prove the missing parts of Theorem \ref{maintheorem}. We need an estimate for interpolant $I_h$ (see \eqref{I_h}) in the augmented norm $W$, and to do so, we first need to prove a trace inequality that goes from a part of boundary to the interior of the domain.

{
\begin{lemma}\label{smalltolarge}
Let $T\in \mathcal{T}_{h}^{\Gamma}$ and let $e$ be an edge of $T$. Suppose that $w \in  {H^2(\wT^{-})\cup H^2(\wT^{+})}$ and $|e^{\pm}|>0$. Then, there exists a constant such that
\begin{equation*}
\frac{1}{|e^\pm|} \|w\|_{L^2(e^\pm)}^2\,\, \le\,\,  C  \left(\frac{1}{h_T^2} \|w\|_{L^2(\wT^{\pm})}^2+ \|D w\|_{L^2(\wT^{\pm})}^2+ h_T^2 \|D^2 w\|_{L^2(T^{\pm})}^2\right).
\end{equation*}
\end{lemma}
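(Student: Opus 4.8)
The plan is to prove the trace inequality on the subset $e^\pm$ of an edge by a scaling argument that carefully tracks the dependence on $|e^\pm|$ versus $h_T$. The essential point is that $e^\pm$ may be a tiny fraction of the full edge $e$, so a naive trace inequality on $e$ followed by restriction would lose a factor of $|e|/|e^\pm|$ — which is exactly what we cannot afford. Instead, I would work directly with $e^\pm$ as a segment sitting on the boundary of $T^\pm$, and use a trace inequality that goes from the segment into the two-dimensional region $\wT^\pm$ (or a suitable subset of it).

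Here is the sequence of steps. First, fix $T\in\ThG$ and an edge $e$ with $|e^\pm|>0$; write $\ell = |e^\pm|$. Pick the triangle (or a rectangle) $R \subset \wT^\pm$ with one side being $e^\pm$ and the opposite extent of order $\ell$ into $\wT^\pm$ — this is possible because $T^\pm$ has a portion adjacent to $e^\pm$ of width comparable to $\ell$ by shape-regularity and by the assumption (Assumption, item 3) that $\Gamma$ crosses $\partial T$ at most twice on different edges, so near $e^\pm$ the region $T^\pm$ looks like a thin strip of width $\sim \ell$ or larger. On the reference configuration (a unit segment as one side of a unit triangle), the standard trace inequality gives
\begin{equation*}
\|\hat w\|_{L^2(\hat e)}^2 \,\le\, C\big(\|\hat w\|_{L^2(\hat R)}^2 + \|\hat D \hat w\|_{L^2(\hat R)}^2\big),
\end{equation*}
and then one can even interpolate/refine this to the scaled statement we want. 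Scaling $\hat R$ to $R$ by the factor $\ell$ (not $h_T$!) yields
\begin{equation*}
\frac{1}{\ell}\|w\|_{L^2(e^\pm)}^2 \,\le\, C\Big(\frac{1}{\ell^2}\|w\|_{L^2(R)}^2 + \|Dw\|_{L^2(R)}^2 + \ell^2\|D^2 w\|_{L^2(R)}^2\Big).
\end{equation*}
Finally, since $\ell = |e^\pm| \le C h_T$ (an edge of $T$ has length $\le h_T$), and $R \subset \wT^\pm$ while $R \subset T^\pm$ for the $D^2$ term, we have $\ell^{-2}\le C h_T^{-2}$ for the first term, and $\ell^2 \le C h_T^2$ for the last term; enlarging the domain of integration from $R$ to $\wT^\pm$ (respectively $T^\pm$) only increases the right-hand side, giving the claimed bound.

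The main obstacle — and the step that requires genuine care rather than bookkeeping — is the construction of the region $R$ and the verification that the trace inequality from $e^\pm$ into $R$ holds with a constant independent of how small $\ell/h_T$ is, i.e. uniformly over all the ways $\Gamma$ can cut $T$. The subtlety is that $R$ must be chosen so that its shape (aspect ratio) is controlled independently of the cut, even though $T^\pm$ itself can be arbitrarily thin; one exploits that $e^\pm$ is a genuine segment of positive length $\ell$ lying on a \emph{straight} edge of $T$, so that there is always a half-disk or isoceles triangle of size $\sim\ell$ built on $e^\pm$ and lying inside $T$ — and, crucially, on the correct side ($\Omega^\pm$) at least within a strip, using that $\Gamma$ has bounded curvature $\kappa$ and $h < 1/(2\kappa)$ so that $\Gamma$ is nearly straight on the scale of $T$. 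One should double-check whether $R$ can always be taken inside $\wT^\pm$ for the lower-order terms (it can, since $T\subset\wT$) and inside $T^\pm$ for the second-derivative term (the statement only asks for $T^\pm$ there, which is the more forgiving requirement, consistent with $R\subset T$). Alternatively, one can cite an anisotropic trace inequality of the Sobolev-trace type on the strip adjacent to $e^\pm$ and avoid constructing $R$ by hand; either way the quantitative content is the same.
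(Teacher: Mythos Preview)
Your scaling step is wrong in the direction that matters.  From $\ell=|e^\pm|\le Ch_T$ you conclude ``$\ell^{-2}\le Ch_T^{-2}$'', but $\ell\le h_T$ gives $\ell^{-2}\ge h_T^{-2}$, not the reverse.  So after your scaled trace inequality
\[
\frac{1}{\ell}\|w\|_{L^2(e^\pm)}^2 \le C\Big(\frac{1}{\ell^2}\|w\|_{L^2(R)}^2 + \|Dw\|_{L^2(R)}^2 + \ell^2\|D^2 w\|_{L^2(R)}^2\Big),
\]
the term $\ell^{-2}\|w\|_{L^2(R)}^2$ cannot be bounded by $h_T^{-2}\|w\|_{L^2(\wT^\pm)}^2$: enlarging $R$ to $\wT^\pm$ handles the norm but not the coefficient, and the coefficient blows up precisely in the regime $\ell\ll h_T$ that the lemma is designed to cover.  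Building $R$ at scale $\ell$ is therefore a dead end; any trace inequality into a region of size $\sim\ell$ carries a constant $\sim\ell^{-2}$ on the $L^2$ term, which is too large.

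The paper's proof avoids this by never working at scale $\ell$.  The key observation is that $\wT^\pm$ (the \emph{patch}, not just $T^\pm$) always contains a full-length interior edge $e_1$ with $|e_1|\sim h_T$ that shares a vertex with $e^\pm$; this is where the tubular-neighbourhood assumption $2h<r$ enters, guaranteeing that some neighbouring triangle lies entirely on the correct side of $\Gamma$.  One then transfers from the short segment $e^\pm$ to the full edge $e_1$ by a one-dimensional fundamental-theorem-of-calculus argument through the common vertex (as in Lemma~3 of \cite{MR1273155}), obtaining
\[
\frac{1}{|e^-|}\|w\|_{L^2(e^-)}^2 \le C\Big(|e^-|\|Dw\|_{L^2(e^-)}^2 + \frac{1}{h_T}\|w\|_{L^2(e_1)}^2 + h_T\|Dw\|_{L^2(e_1)}^2\Big),
\]
after which standard trace inequalities from $e_1$ (and $e^-$) into $\wT^-$ finish the job, using $|\wT^-|\ge ch_T^2$.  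The essential idea you are missing is that the right-hand side must live on a region of \emph{full} size $h_T$; this is exactly why the statement involves the patch $\wT^\pm$ rather than $T^\pm$ for the lower-order terms.
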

\begin{proof}
Consider the case where $e\cap\Gamma\neq \emptyset$. Let $e_1$ be an interior edge of $\wT$, contained in $\wT^{-}$ and connected by one node to $e^{-}$. Edge $e_1$ exists thanks to the $r$-tubular neighborhood assumption with $2 h< r$. Note that  $c h_T \le |e_1|$ for a constant $c$ that only depends on the shape regularity of the mesh. Using the fundamental theorem of calculus  we can show (see Lemma 3 of \cite{MR1273155} for a similar two dimensional result)
\begin{equation*}
\frac{1}{|e^-|} \|w\|_{L^2(e^-)}^2 \,\,\le\,\, C( |e^-|\, \|{D} w\|_{L^2(e^-)}^2 +\frac{1}{h_T}\|w\|^2_{L^2(e_1)}+ h_T\|{D} w\|_{L^2(e_1)}^2).
\end{equation*}
Noting that $|\wT^-| \ge C h_T^2$, and using standard trace inequalities gives the result. The exact same argument would apply for $e^+$. If $e\cap\Gamma= \emptyset$, $e^{-} = e$ we apply trace inequality from $e$ to $\wT^-$.
\end{proof}
}

\begin{lemma} (Best Approximation Error Estimate) \label{normestimate}
It holds,
\begin{equation*}
\|I_h u-u\|_{W} \,\,\le\,\, C\, h\, {(1+\kappa)} \left(\sqrt{\rho^-} (\|Du\|_{L^2 (\Omega^-)}+ \|D^2u \|_{L^2(\Omega^-)})+
  \sqrt{\rho^+}( \| D u\|_{L^2(\Omega^+)} + \| D^2 u\|_{L^2(\Omega^+)}) \right).
\end{equation*}
{Here the constant $C$ depends on the constant $\Ce$ from \eqref{extension}. }
\end{lemma}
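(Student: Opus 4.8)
The plan is to estimate $\|I_h u - u\|_W$ by splitting the norm into its constituent pieces according to the definition \eqref{augmentednorm}: the broken gradient term $\|\sqrt{\rho}\nabla_h(I_h u - u)\|_{L^2(\Omega)}$, the scaled jump terms $\frac{\rho^\pm}{|e^\pm|}\|\llbracket I_h u - u\rrbracket\|_{L^2(e^\pm)}^2$, the scaled gradient-jump terms $\rho^\pm |e^\pm| \|\llbracket \nabla_h(I_h u - u)\rrbracket\|_{L^2(e^\pm)}^2$, and the augmenting average terms $\rho^\pm |e^\pm| \|\{\nabla_h(I_h u - u)\}\cdot\n{n}\|_{L^2(e^\pm)}^2$. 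For each piece I would separate the contributions coming from elements $T \in \ThG$ (cut by the interface) and elements away from $\Gamma$. On elements away from $\Gamma$, $I_h$ reduces to a (slightly modified) Scott--Zhang interpolant and all jumps vanish, so only standard interpolation estimates are needed. The crux is the elements in $\ThG$.

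For $T \in \ThG$, the volume term is handled directly by Lemma \ref{localcurve}: inequalities \eqref{localcurve1} and \eqref{localcurve2}, after multiplying by $\rho^-$ and $\rho^+$ respectively and using $\rho^- \le \rho^+$, give exactly the element-wise bounds $\rho^\pm h_T^{2j}\|D^j(u^\pm - I_T^\pm u)\|_{L^2(\wT^\pm)}^2 \lesssim (1+\kappa)^2 h_T^4 \big(\rho^-(\|Du_E^-\|_{L^2(\wT)}^2 + \|D^2 u_E^-\|_{L^2(\wT)}^2) + \rho^+(\|Du_E^+\|_{L^2(\wT)}^2 + \|D^2 u_E^+\|_{L^2(\wT)}^2)\big)$; summing over $T$, using finite overlap of the patches $\wT$, and then the extension bound \eqref{extension} yields the desired control of $\|\sqrt{\rho}\nabla_h(I_hu - u)\|_{L^2}$ with the right geometric constant. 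For the edge terms with $e \in \EhG$, shared by $T_1, T_2$: on each subedge $e^\pm$ I would write $\llbracket I_h u - u\rrbracket = (I_{T_1}u - u)\n{n}_1 + (I_{T_2}u - u)\n{n}_2$ (using continuity of the exact solution across $\Gamma$ and across mesh edges), apply the new trace inequality Lemma \ref{smalltolarge} to pass from $\|(I_{T_i}u - u)|_{e^\pm}\|_{L^2}$ to $h_{T_i}^{-1}\|\cdot\|_{L^2(\wT_i^\pm)} + \|D\cdot\|_{L^2(\wT_i^\pm)} + h_{T_i}\|D^2\cdot\|_{L^2(T_i^\pm)}$, and then feed in Lemma \ref{localcurve} again. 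The factor $\rho^\pm/|e^\pm|$ combines with the $h_T^{-2}$ from the trace inequality to produce $\rho^\pm h_T^{-2}\cdot h_T^2 = \rho^\pm h_T^2$ times the right-hand side of Lemma \ref{localcurve}, which is the correct order. The gradient-jump and average terms are similar but easier: since $I_h u$ is piecewise linear, $\nabla_h I_h u$ is constant on each $T_i^\pm$, so $\|\nabla_h(I_{T_i}u - u)|_{e^\pm}\|_{L^2}$ is bounded by a trace inequality applied to $\nabla_h(I_{T_i}u - u)$ plus the fact that $\nabla_h I_{T_i}u$ is the constant value of a linear interpolant; multiplying by $\rho^\pm|e^\pm|$ and using $|e^\pm| \le C h_T$ again produces terms of the form $\rho^\pm h_T^{2}(\cdots)$.

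The main obstacle I anticipate is bookkeeping the contrast: one must be careful never to bound a $\rho^+$-weighted quantity on a thin subregion by a $\rho^-$-weighted quantity or vice versa, which is precisely why the right-hand side of \eqref{localcurve2} carries the benign factor $\rho^-/\rho^+ \le 1$ in front of $\|D^2 u_E^-\|$ rather than a blow-up. Concretely, when handling the $\rho^+$-weighted edge term on $e^+$ one must make sure the trace inequality Lemma \ref{smalltolarge} is invoked on the $+$ side (so the $\rho^+$ weight stays with $+$ quantities) and that the only cross term produced is the harmless $\frac{\rho^-}{\rho^+}\|D^2 u_E^-\|$ one, which after multiplication by $\rho^+$ becomes $\rho^-\|D^2 u_E^-\| \le \rho^-(\|Du\|+\|D^2u\|)$ as allowed. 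A secondary technical point is that $I_h u$ need not be continuous across edges in $\EhG$, so the jump $\llbracket I_h u\rrbracket$ does not vanish there and genuinely contributes; however, since $u$ itself is continuous, $\llbracket I_h u - u\rrbracket = \llbracket I_h u\rrbracket$ and this is controlled by the interpolation error on both adjacent elements as described. Finally, I would collect all the pieces, sum over $e \in \EhG$ and $T \in \ThG$ using shape-regularity (so $h_{T_1} \simeq h_{T_2} \simeq h$ on neighbouring elements and patches overlap boundedly), replace $h_T$ by $h$, and apply \eqref{extension} one last time to convert the patch-wise $u_E^\pm$ norms into the global $\Omega^\pm$ norms of $u$, absorbing $\Ce$ into $C$.
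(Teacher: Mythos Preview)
Your proposal is correct and follows essentially the same approach as the paper: decompose $\|I_hu-u\|_W$ into its constituent pieces, handle the volume term directly via Lemma~\ref{localcurve}, handle the jump terms on $e^\pm$ by writing $\llbracket I_hu-u\rrbracket$ in terms of $u-I_{T_i}u$ on each neighbouring element and applying the trace inequality of Lemma~\ref{smalltolarge} followed by Lemma~\ref{localcurve}, handle the gradient-jump and average terms via an ordinary trace inequality together with $|e^\pm|\le|e|$, and finally sum using finite overlap and the extension bound~\eqref{extension}. Your discussion of the contrast bookkeeping (keeping $\rho^+$-weighted edge terms on the $+$ side so that the only cross term is the benign $\frac{\rho^-}{\rho^+}\|D^2u_E^-\|$ from~\eqref{localcurve2}) is exactly the point.
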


\begin{proof}
We bound each term of $\|u-I_h u\|_W$, see (\ref{augmentednorm}) and \eqref{defV}, separately. Using Lemma \ref{localcurve} we easily have
\begin{alignat*}{1}
& \|\sqrt{\rho}\nabla_h(u-I_h u)\|_{L^2(\Omega)}^2 \,\, \leq\,\,
C \, \sum_{T \in \mathcal{T}_h \backslash \mathcal{T}_h^\Gamma} h_T^2
\left(\rho^- \|D^2 u_E^-\|_{L^2(T)}^2+ \rho^+ \|D^2 u_E^+\|_{L^2(T)}^2 \right) \\
& + C\, \sum_{T \in \mathcal{T}_h^\Gamma} h_T^2 {(1+\kappa)^2} \,
 \left({\rho^-} (\|D u_E^-\|^2_{L^2 (\wT)} + \|D^2 u_E^-\|^2_{L^2 (\wT)})
+ {\rho^+}( \|D u_E^+\|^2_{L^2 (\wT)}+  \|D^2 u_E^+\|^2_{L^2 (\wT)}) \right)  \\
&\leq\,\,  C\, h^2 \, {(1+\kappa)^2}  \left(\rho^- (\|Du\|_{L^2(\Omega^-)}^2+
\|D^2u\|_{L^2(\Omega^-)}^2) + \rho^+ (  \|Du\|_{L^2(\Omega^+)}^2 + \|D^2u\|_{L^2(\Omega^+)}^2 )\right) .
\end{alignat*}
In the last inequality we used \eqref{extension}. We also have used finite overlapping of the sets $\wT$. We next estimate the term
\begin{equation*}
\sum_{e \in \EhG} \left(\frac{\rho^-}{|e^-|} \|\llbracket u-I_h u\rrbracket\|_{L^2(e^-)}^2 + \frac{\rho^+}{|e^+|} \|\llbracket u-I_h u\rrbracket\|_{L^2(e^+)}^2\right).
\end{equation*}
Suppose that $e= \mathrm{Int}(\partial T_1 \cap \partial T_2)$.  Then, using Lemma \ref{smalltolarge} we have
\begin{alignat*}{1}
 & \frac{\rho^-}{|e^-|} \|\llbracket u-I_h u\rrbracket\|_{L^2(e^-)}^2 + \frac{\rho^+}{|e^+|} \|\llbracket u-I_h u\rrbracket\|_{L^2(e^+)}^2  \\
& \le C  \frac{\rho^-}{|e^-|} (\|u-I_{T_1} u\|_{L^2(e^-)}^2 +  \|u-I_{T_2} u\|_{L^2(e^-)}^2) +C \frac{\rho^+}{|e^+|} (\| u-I_{T_1}\|_{L^2(e^+)}^2+  \| u-I_{T_2}\|_{L^2(e^+)}^2)  \\
& \le C \rho^- \sum_{i=1}^2 \left(\frac{1}{h_{T_i}^2} \|u-I_{T_i} u\|_{L^2(\omega_{T_{i}}^{-})}^2+  \|D (u-I_{T_i} u)\|_{L^2(\omega_{T_i}^{-})}^2+ h_{T_i}^2 \|D^2(u-I_{T_i} u)\|_{L^2(\omega_{T_{i}}^{-})}^2\right)  \\
& \quad +  C  \rho^+ \sum_{i=1}^2 \left(\frac{1}{h_{T_i}^2} \|u-I_{T_i} u\|_{L^2(\omega_{T_i}^{+})}^2+ \|D (u-I_{T_i} u)\|_{L^2(\omega_{T_i}^{+})}^2+ h_{T_i}^2 \|D^2(u-I_{T_i} u)\|_{L^2(\omega_{T_{i}}^{+})}^2\right).
\end{alignat*}
If we now apply {L}emma \ref{localcurve} and use \eqref{extension}, we get
\begin{alignat*}{1}
&\sum_{e \in \EhG} (\frac{\rho^-}{|e^-|} \|\llbracket u-I_h u\rrbracket\|_{L^2(e^-)}^2 + \frac{\rho^+}{|e^+|} \|\llbracket u-I_h u\rrbracket\|_{L^2(e^+)}^2) \\
&\qquad \le
 C h^2 {(1+\kappa)^2}  \left(\rho^- (\|Du\|_{L^2(\Omega^-)}^2+
\|D^2u\|_{L^2(\Omega^-)}^2) + \rho^+ (\|Du\|_{L^2(\Omega^+)}^2+  \|D^2u\|_{L^2(\Omega^+)}^2)\right).
\end{alignat*}
{We next bound}
\begin{equation*}
\sum_{e \in \EhG} \left(\rho^-|e^-|\, \|\llbracket D (u-I_h u)\rrbracket\|_{L^2(e^-)}^2 + \rho^+|e^+| \,\|\llbracket D(u-I_h u) \rrbracket\|_{L^2(e^+)}^2\right).
\end{equation*}
Using a trace inequality to obtain
\begin{alignat*}{2}
 \rho^-|e^-| \,\|\llbracket \nabla_h(u-I_h u)\rrbracket&\|_{L^2(e^-)}^2 + \rho^+|e^+|\, \|\llbracket \nabla_h (u-I_h u) \rrbracket\|_{L^2(e^+)}^2 \\
&\le\,\,  \rho^-|e| \,\|\llbracket \nabla_h(u-I_h u)\rrbracket\|_{L^2(e^-)}^2 + \rho^+|e| \,\|\llbracket \nabla_h (u-I_h u) \rrbracket\|_{L^2(e^+)}^2 \\
&\le\,\, \rho^-  |{e}|\left(\|D( u-I_{T_1} u)\|_{L^2(e^-)}^2+\|D(u-I_{T_2} u)\|_{L^2(e^-)}^2\right)  \nonumber  \\
&\,\, \quad  + \rho^+|{e}| \left(\|D (u-I_{T_1} u) \|_{L^2(e^+)}^2+  \|D(u-I_{T_2} u) \|_{L^2(e^+)}^2\right) \nonumber  \\
&\le \,\, C  \rho^- \sum_{i=1}^2 \left( \|D (u-I_{T_i} u)\|_{L^2(\omega_{T_{i}}^{-})}^2+ h_{T_i}^2 \|D^2(u-I_{T_i} u)\|_{L^2(\omega_{T_{i}}^{-})}^2\right) \nonumber \\
& \,\, \quad+  C  \rho^+ \sum_{i=1}^2 \left( \|D(u-I_{T_i} u)\|_{L^2(\omega_{T_i}^{+})}^2+ h_{T_i}^2 \|D^2(u-I_{T_i} u)\|_{L^2(\omega_{T_i}^{+})}^2\right). \nonumber
\end{alignat*}
Again,  if we apply {L}emma \ref{localcurve} and \eqref{extension} we obtain
\begin{alignat*}{2}
& \sum_{e \in \EhG} \left(\rho^-|e^-| \|\llbracket \nabla_h  (u-I_h u)\rrbracket\|_{L^2(e^-)}^2 \right.+\left. \rho^+|e^+| \|\llbracket \nabla_h (u-I_h u) \rrbracket\|_{L^2(e^+)}^2\right)  \\
& \le \,\,   C h^2 {(1+\kappa)^2}  \left(\rho^- (\|Du\|_{L^2(\Omega^-)}^2+
\|D^2u\|_{L^2(\Omega^-)}^2) + \rho^+ (\|Du\|_{L^2(\Omega^+)}^2+   \|D^2u\|_{L^2(\Omega^+)}^2)\right).
\end{alignat*}
Finally, the last term is estimated by
\begin{alignat*}{1}
& \sum_{T \in \mathcal{E}_h}   (| e^- | \rho^- \|\{ \nabla_h (u-I_h u)\} \cdot \n{n}\|_{L^2(e^-)}^2+  | e^+ | \rho^+ \|\{ \nabla_h (u-I_h u)\} \cdot \n{n}\|_{L^2(e^+)}^2) \\
& \le  \,\,
 C h^2 {(1+\kappa)^2}  \left(\rho^- (\|Du\|_{L^2(\Omega^-)}^2+
\|D^2u\|_{L^2(\Omega^-)}^2) + \rho^+ ( \|Du\|_{L^2(\Omega^+)}^2+ \|D^2u\|_{L^2(\Omega^+)}^2)\right),
\end{alignat*}
exactly in the same way as it was done above.
\end{proof}

In order to bound the inconsistency term we need a technical lemma.
\begin{lemma}\label{compare}
Let $T \in \mathcal{T}_h^\Gamma$ and enumerate the three edges of $T$ by $e_1, e_2, e_3$. Then, there exists a constant $m$ such that
\begin{equation*}
|\TG| \,\,\le\,\, m \max_{i=1,2,3} |e_i^-| \quad\text{ and }\quad   |\TG| \,\,\le \,\,m \max_{i=1,2,3} |e_i^+|.
\end{equation*}
The constant $m$ {is independent of $r$ (\revblue{see Lemma \ref{tubularlemma}}), depends only on the shape regularity of the triangulation.}
\end{lemma}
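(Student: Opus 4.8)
The plan is to prove the bound $|\TG| \le m \max_i |e_i^-|$ (and symmetrically with $+$); by symmetry it suffices to treat the $-$ case. The strategy is to compare $\TG = T \cap \Gamma$ against the chord of $\Gamma$ it subtends, and then to compare that chord against the sub-edges $e_i^-$. First I would use Assumption \ref{Assumption}(3): the interface enters and exits $T$ through two distinct edges, say $e_1$ and $e_2$, at points $p_1 \in \overline{e_1}$ and $p_2 \in \overline{e_2}$. Since $\Gamma$ lies in the $r$-tubular neighborhood and $h < r/2$, standard differential-geometry estimates (using $\kappa = \|\n X''\|_{L^\infty}$) give that the arc length $|\TG|$ is comparable to the chord length $|p_1 - p_2|$, with a constant depending only on $\kappa h$, hence uniformly bounded under our assumptions; in fact one also has $|\TG| \le C h_T$, which is Lemma \ref{TGammalessLT} already invoked in the excerpt. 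So it remains to bound $|p_1 - p_2|$ by a constant times $\max_i |e_i^-|$.

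The key geometric point is that the segment of $\Gamma$ from $p_1$ to $p_2$ separates $T^-$ from $T^+$ inside $T$; together with $p_1 \in \overline{e_1}$, $p_2 \in \overline{e_2}$ this forces at least one of the edges $e_1, e_2$ to have a sub-segment on the $\Omega^-$ side whose length is a fixed fraction of $|p_1 - p_2|$. More concretely, I would argue as follows: the chord $[p_1,p_2]$ together with portions of $\partial T$ bounds $T^-$ (up to the small curvature correction between the chord and the arc, which by the tubular-neighborhood assumption stays within distance $O(\kappa h_T^2)$ of the chord and therefore does not destroy the comparison once $h$ is small). Since $T$ is shape-regular, the chord $[p_1,p_2]$, being inside $T$ and joining two different edges, must be "long" relative to the inradius, and the vertex $v$ of $T$ opposite to neither... — more cleanly: let $v_{12}$ be the common vertex of $e_1$ and $e_2$. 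Exactly one of the two arcs of $\partial T \setminus \{p_1, p_2\}$ contains $v_{12}$, and that arc together with $[p_1,p_2]$ encloses one of $T^\pm$. Whichever of $\Omega^-$ lies on the $v_{12}$-side, the boundary of $T^-$ then contains the full sub-edges of $e_1$ and $e_2$ adjacent to $v_{12}$, so $\max(|e_1^-|,|e_2^-|) \ge \tfrac12 \,\mathrm{dist}(v_{12}, [p_1,p_2])$, and shape-regularity bounds $|p_1 - p_2| \le C\, h_T \le C'\, \mathrm{dist}(v_{12},[p_1,p_2])$ unless the chord passes very close to $v_{12}$ — in which case $|p_1 - v_{12}|$ and $|p_2 - v_{12}|$ are both small, so instead $|p_1 - p_2|$ is itself small and comparable to a sub-segment of $e_1$ or $e_2$ on the other side of $v_{12}$. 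Handling both alternatives gives $|p_1 - p_2| \le m'\max_i |e_i^-|$ with $m'$ depending only on shape regularity, and combining with $|\TG| \le C |p_1-p_2|$ finishes the proof; the constant $m$ is then independent of $r$ and depends only on shape regularity.

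The main obstacle I anticipate is the careful case analysis near the vertex $v_{12}$: one has to rule out the degenerate-looking situation where both $p_1$ and $p_2$ crowd toward $v_{12}$ so that $\TG$ is a tiny arc near a corner, yet the sub-edges $e_i^-$ on the "wrong" side remain small too. The resolution is that in that regime $T^-$ is the small corner region, whose boundary consists precisely of those short sub-segments of $e_1$ and $e_2$ near $v_{12}$ plus the arc; since $\Gamma$ is $\mathcal C^2$ with bounded curvature and $h$ small, the arc length $|\TG|$ is then comparable to $|p_1 - v_{12}| + |v_{12} - p_2|$, which is exactly $|e_1^- | + |e_2^-|$ up to constants — so the bound holds in that case as well, just with the max replaced by the sum, which only changes $m$. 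A clean way to package all of this is to note that $\overline{T^-}$ is a region of $T$ whose topological boundary relative to $T$ is the arc $\TG$, and whose boundary relative to $\partial T$ is a union of sub-edges; an isoperimetric/connectivity argument then gives that the total length of $\partial(T^-) \cap \partial T$ is at least a fixed multiple of $|\TG|$ minus a curvature term, and since there are only three edges, one of them must carry at least a third of that length. I would also remark that the $+$ version is identical with the roles of $\Omega^-$ and $\Omega^+$ swapped, and that the constant's independence of $r$ comes from the fact that only $\kappa h_T \le \tfrac12$ (not $r$ itself) enters the arc-versus-chord comparison.
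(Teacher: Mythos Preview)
Your overall two-step plan is the same as the paper's: first compare the arc $\TG$ to its chord $[p_1,p_2]$ via Lemma~\ref{TGammalessLT}, then bound the chord by $\max_i |e_i^\pm|$. But your second step is far more complicated than necessary, and the case analysis you propose is both incomplete and in places incorrect. For instance, your claim that if the chord passes close to $v_{12}$ then \emph{both} $|p_1-v_{12}|$ and $|p_2-v_{12}|$ are small is false (take $p_1$ near $v_{12}$ and $p_2$ far along $e_2$: the foot of the perpendicular from $v_{12}$ to $[p_1,p_2]$ can be very close to $v_{12}$ while $|p_1-p_2|$ stays of order $h_T$). You also never treat the case where $v_{12}$ lies in $\Omega^+$. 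The vague ``isoperimetric/connectivity argument'' at the end is pointing in the right direction but is not a proof.

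The paper's argument replaces all of this by one triangle inequality. The points $p_1,p_2$ split $\partial T$ into two polygonal arcs; one of them lies entirely in $\overline{\Omega^-}$ and is precisely $\bigcup_i \overline{e_i^-}$. Since the straight segment is the shortest path between its endpoints,
\[
|p_1-p_2|\ \le\ \sum_{i=1}^{3}|e_i^-|\ \le\ 3\max_{i}|e_i^-|,
\]
and then Lemma~\ref{TGammalessLT} gives $|\TG|\le 2|p_1-p_2|$, so $m=6$. No case split, no shape-regularity constant, and the independence of $r$ is immediate because the only constants that appear are $2$ and $3$ (not a $\kappa h_T$ factor as you suggest). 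I recommend you discard the vertex case analysis and use this direct path-length comparison.
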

\begin{proof} See Appendix \ref{compareproof}.
\end{proof}

Now we are able to establish the inconsistency error estimate. The  constant will be
independent of the contrast of the coefficients $\rho^-$ and $\rho^+$.
\begin{lemma} \label{inconsistency}(Inconsistency Error Estimate)
Let $u$ be the solution to \eqref{Problem} and $u_h \in V_h$ be its finite element approximation solution to \eqref{fem}. Then
for any $d_h \in V_h$, it holds
\begin{equation} \label{lemma11inq}
a_h(u-u_h,d_h) \,\,\le\,\, C\, h \, \kappa \left( \sqrt{\rho^-} \left(||D u||_{L^2 (\Omega^-)}+h \|D^2 u\|_{L^2(\Omega^-)} \right) + \sqrt{\rho^-} h \|D^2 u\|_{L^2(\Omega^+)} \right)
\|d_h\|_V.
\end{equation}
{The constant $C$ depends on the constant $\Ce$ from \eqref{extension}}.
\end{lemma}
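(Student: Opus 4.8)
The plan is to exploit Galerkin orthogonality and identify the inconsistency as a sum of boundary terms coming from integration by parts against the true solution. Starting from the PDE $-\rho^\pm \Delta u^\pm = f^\pm$, multiply by $d_h \in V_h$ and integrate by parts element-by-element on each $T^\pm$. Because $d_h$ is discontinuous across edges in $\EhG$ and across $\Gamma$ (inside cut triangles), this produces volume terms $\int_\Omega \rho \nabla_h u \cdot \nabla_h d_h$ plus edge terms of the form $-\sum_{e} \int_e \{\rho \nabla_h u\}\cdot \llbracket d_h\rrbracket$ (the remaining jump-of-flux contributions vanish since $u$ has continuous flux across interior edges where $d_h$ is continuous, and $[\rho D_{\n n} u]=0$ across $\Gamma$), together with the term $\sum_e \frac{\gamma}{|e^\pm|}\int_{e^\pm}\rho^\pm \llbracket u\rrbracket\cdot\llbracket d_h\rrbracket$ which vanishes because $[u]=0$ on $\Gamma$ so $\llbracket u\rrbracket=0$ on every relevant sub-edge, and similarly the normal-derivative penalty term vanishes where $\nabla u$ is continuous. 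Comparing with $a_h(u,d_h)$ from \eqref{a_h}, using $a_h(u_h,d_h)=(f,d_h)$, one finds that $a_h(u-u_h,d_h)$ equals precisely the stabilization-type term that does NOT vanish for the exact solution: namely $\sum_{e\in\EhG}(|e^-|\int_{e^-}\rho^- \llbracket \nabla_h u\rrbracket \llbracket \nabla_h d_h\rrbracket + |e^+|\int_{e^+}\rho^+\llbracket \nabla_h u\rrbracket\llbracket\nabla_h d_h\rrbracket)$. Here the point is that $u_E^\pm$ (or rather the true $u$) has $\llbracket \nabla_h u\rrbracket \neq 0$ on sub-edges that are cut by $\Gamma$, since on, say, $e^-$ one of the two one-sided limits of $\nabla u$ might be a limit of $\nabla u^+$ — so the jump is controlled by $|1-\rho^-/\rho^+|$ times a tangential derivative, exactly as in the proof of Lemma~\ref{localcurve}.

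So the core estimate reduces to bounding $\sum_{e\in\EhG}\big(\rho^-|e^-|\,\|\llbracket \nabla_h u\rrbracket\|_{L^2(e^-)}^2 + \rho^+|e^+|\,\|\llbracket \nabla_h u\rrbracket\|_{L^2(e^+)}^2\big)^{1/2}$ by $Ch\kappa(\dots)$ and then applying Cauchy–Schwarz against the corresponding term in $\|d_h\|_V$. First I would fix an edge $e\in\EhG$ shared by $T_1,T_2$ and analyze $\llbracket \nabla_h u\rrbracket$ on $e^-$. There are two cases: if both $T_1^-$ and $T_2^-$ touch $e^-$ in a region where $u$ is genuinely $u^-$ on both sides, then $\nabla u^-$ is continuous and $\llbracket\nabla_h u\rrbracket = 0$; the only way to get a nonzero jump is when the sub-edge $e^-$ is adjacent to $\Gamma$ on one side, i.e. the edge is cut. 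In that case the jump involves the difference of $\nabla u^-$ on one side and (a one-sided restriction relating to) $\nabla u^+$ via the transmission conditions $D_{\n t}u^+ = D_{\n t}u^-$ and $D_{\n n}u^+=(\rho^-/\rho^+)D_{\n n}u^-$, so exactly as in \eqref{inq:102}, the jump in the normal component picks up a factor $(1-\rho^-/\rho^+)$ multiplied by $|\n t_0^\pm \cdot \n n^\pm| = \mathcal O(\kappa h_T)$ plus the genuine $H^2$-variation of $\nabla u$ over the edge. Using a trace inequality from $e^-$ into $\wT^-$ (or using the extensions $u_E^\pm$ and \eqref{extension}), together with $|e^-|\le Ch_T$, this gives $\rho^-|e^-|\|\llbracket\nabla_h u\rrbracket\|_{L^2(e^-)}^2 \le C(1+\kappa)^2 h_T^2 \rho^- (\|Du_E^-\|_{L^2(\wT)}^2 + h_T^2\|D^2 u_E^-\|_{L^2(\wT)}^2 + \dots)$; on $e^+$ the analogous bound has the $\rho^+$ in front of $\llbracket\nabla_h u\rrbracket$ on $e^+$, but when the jump comes from the transmission condition the factor $\rho^+(1-\rho^-/\rho^+)^2 \le \rho^+$ stays bounded and the $D^2 u_E^+$ contribution dominates; carefully, since we want only $\sqrt{\rho^-}$ in the final bound per the statement, I would keep track of which sub-edge contributes and note the cut sub-edges on the $\Omega^+$ side also reduce to $\rho^-$-weighted quantities through the transmission relation. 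Summing over $e\in\EhG$, using finite overlap of the patches $\wT$ and that these edges all lie in $Tub(2h)$, and pulling back to $\Omega^\pm$ via \eqref{extension}, yields the factor $h\kappa(\sqrt{\rho^-}(\|Du\|_{L^2(\Omega^-)}+h\|D^2u\|_{L^2(\Omega^-)}) + \sqrt{\rho^-}h\|D^2 u\|_{L^2(\Omega^+)})$. Finally, $\|d_h\|_V$ controls $\big(\sum_e \rho^\pm|e^\pm|\|\llbracket\nabla_h d_h\rrbracket\|_{L^2(e^\pm)}^2\big)^{1/2}$ by definition \eqref{defV}, so Cauchy–Schwarz closes the estimate.

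The main obstacle I expect is the bookkeeping of the weights $\rho^\pm$ so that only $\sqrt{\rho^-}$ appears in the final bound — in particular showing that the $\rho^+$-weighted term $\rho^+|e^+|\|\llbracket\nabla_h u\rrbracket\|_{L^2(e^+)}^2$ on cut sub-edges in $\Omega^+$ is actually controlled by $\rho^-$-weighted norms of $u$ (using $\rho^+(1-\rho^-/\rho^+)^2 = \rho^+ - 2\rho^- + (\rho^-)^2/\rho^+ \le \rho^+$ is not good enough by itself; one must use that the tangential part of the jump is continuous and only the normal part jumps, and the normal part of $\nabla u^+$ equals $(\rho^-/\rho^+)$ times that of $\nabla u^-$, so $\rho^+ \|D_{\n n}u^+\|^2 = (\rho^-)^2/\rho^+\,\|D_{\n n}u^-\|^2 \le \rho^-\|D_{\n n}u^-\|^2$). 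A secondary technical point is the precise geometric estimate $|e^\pm|\le Ch_T$ and the trace inequality from a possibly very small sub-edge $e^\pm$ into $\wT^\pm$; the first is handled by shape-regularity and the second is exactly Lemma~\ref{smalltolarge}. One also needs to be careful that the penalty terms $\frac{\gamma}{|e^-|}\int_{e^-}\rho^-\llbracket u\rrbracket\cdot\llbracket d_h\rrbracket$ and the flux-average terms $\{\rho\nabla_h u\}\cdot\llbracket d_h\rrbracket$ on sub-edges genuinely drop out — the former because $[u]=0$ globally forces $\llbracket u\rrbracket=0$ on each sub-edge in $\EhG$ (as $u$ is single-valued across the mesh edges), and the latter cancels against the corresponding term in $a_h$ when we integrate by parts against the true PDE. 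Once these cancellations are verified, what remains is precisely the non-symmetric stabilization term evaluated at the true solution, and the rest is the estimate sketched above.
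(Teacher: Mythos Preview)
Your identification of the inconsistency term is incorrect, and this derails the entire argument. You claim that
\[
a_h(u-u_h,d_h)=\sum_{e\in\EhG}\Big(|e^-|\!\int_{e^-}\rho^-\llbracket\nabla_h u\rrbracket\,\llbracket\nabla_h d_h\rrbracket+|e^+|\!\int_{e^+}\rho^+\llbracket\nabla_h u\rrbracket\,\llbracket\nabla_h d_h\rrbracket\Big),
\]
but this sum actually vanishes. On a sub-edge $e^-=e\cap\Omega^-$, the one-sided traces of $\nabla_h u$ from $T_1$ and $T_2$ are both traces of $\nabla u^-$ (the points of $e^-$ lie in $\Omega^-$, so the restriction of $u$ from either side is $u^-$), and since $u^-\in H^2(\Omega^-)$ the gradient is continuous across interior edges. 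Hence $\llbracket\nabla_h u\rrbracket=0$ on every $e^\pm$. Your picture that ``one of the two one-sided limits of $\nabla u$ might be a limit of $\nabla u^+$'' is mistaken: the partition into $e^-$ and $e^+$ is by physical location in $\Omega^\pm$, not by which discrete piece of $d_h$ is adjacent.

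The correct inconsistency term, obtained by the integration by parts you describe, lives on the \emph{interface} $\Gamma$ inside each cut element:
\[
a_h(u-u_h,d_h)=\sum_{T\in\ThG}\int_{\TG}(\rho^- D_{\n n^-}u^-)\,[d_h],
\]
where $[d_h]=d_h^+-d_h^-$ is the jump of the discrete function across $\Gamma$. This term does not vanish because functions in $V_h$ are \emph{not} continuous across $\Gamma$: the local space $S^1(T)$ only matches value and tangential derivative at the single point $x_0$. The paper's proof exploits precisely this structure: since $[d_h]$ is a linear function that vanishes together with its tangential derivative at $x_0$, it vanishes identically on the tangent line $L_T$, and therefore $|[d_h](x)|\le C\kappa s^2\,|D[d_h]|$ on $\TG$ (with $s=|\TG|$) because $\TG$ lies within $O(\kappa s^2)$ of $L_T$. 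This is where the factor $\kappa h$ originates. The remaining work is to bound $\sqrt{\rho^-}\sqrt{s}\,\|D[d_h]\|_{L^2(\TG)}$ by $\|d_h\|_V$; this requires Lemmas~\ref{twotri} and~\ref{compare} to transfer control of $|Dd_h^+|$ from $\TG$ to a neighboring element with sufficiently large $|T^+|$, and uses $\rho^-\le\rho^+$ together with the flux-jump terms in $\|d_h\|_V$. None of this machinery appears in your proposal.
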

\begin{proof}
First note that $a_h(u-u_h,d_h) = a_h(u,d_h) - (f,d_h)$. The inconsistency term is then given by
\begin{equation*}
a_h(u-u_h, d_h)\,\,=\,\, \sum_{T \in \mathcal{T}_h^{\Gamma}}  \int_{\TG} (\rho^- D_{\n{n}^-} u^-) [d_h] ,
\end{equation*}
where we have used that $\rho^- D_{\n{n}^-} u^-=\rho^+  D_{\n{n}^+}u^+$ on $\Gamma$. We also have used the notation $[d_h]=d_h^+-d_h^-$.   \revblue{Let $L_T$ be the tangent
line to $\Gamma$ at $x_0$ and note that $[d_h]$ is linear function on $L_T$ that vanishes $x_0$ as well as its first derivative}. Hence,  $[d_h] \equiv 0$ on the  line $L_T$. Since the distance between this line  and the curve $\TG$ is $O(s^2)$, where we set $s=|\TG|$, we have that $|[d_h](x)| \le C
{\|\n{X}''\|_{L^{\infty}}}\,s^2 |D [d_h]|$ for every  $x \in \TG$. Hence, we have
\begin{equation*}
 \int_{\TG} (\rho^- D_{\n{n}^-} u^-) [d_h] \,\,\le\,\, C\, {\kappa} \, s^2 \|\sqrt{\rho^-} D_{\n{n}^-} u_E^-\|_{L^2(\TG)} \sqrt{\rho^-} \| D[d_h]\|_{L^2(\TG)}.
\end{equation*}
For the moment, let us assume that following inequality holds
\begin{equation}\label{MT}
\sqrt{\rho^-} \sqrt{s} \|D[d_h]\|_{L^2(\TG)} \,\,\le\,\, C  \,M_T,
\end{equation}
where
\begin{alignat*}{1}
M_T\,\,= \,\,& \sum_{e \subset \partial T} \left( \sqrt{|e^+|} \|\sqrt{\rho^+} \llbracket {\nabla_h} d_h \rrbracket\|_{L^2(e^+)}+   \frac{1}{\sqrt{|e^+|}} \|\sqrt{\rho^+} \,d_h \|_{L^2(e^+)} + \|\sqrt{\rho^+} {\nabla_h}  d_h\|_{L^2(\wT^{+})}\right) \\
    &+ \sum_{e \subset \partial T} \left(\sqrt{|e^-|} \|\sqrt{\rho^-} \llbracket {\nabla_h}  d_h \rrbracket \|_{L^2(e^-)}+ \frac{1}{\sqrt{|e^-|}} \|\sqrt{\rho^-} \,d_h \|_{L^2(e^-)} + \|\sqrt{\rho^-} {\nabla_h}  d_h\|_{L^2(\wT^{-})}\right).
\end{alignat*}
Then,
\begin{equation*}
\int_{\TG} (\rho^- D_{\n{n}^-} u^-) [d_h]  \,\,\le\,\, C \,{\kappa} \, {s}^{3/2} \|\sqrt{\rho^-} D u_E^-\|_{L^2(\TG)} M_T.
\end{equation*}
Letting $B_s$ be the ball of radius $s$ centered at $x_0$ we can use the trace inequality to get
\begin{equation*}
\|\sqrt{\rho^-} D u_E^-\|_{L^2(\TG)}\,\, \le\,\, C (\frac{1}{\sqrt{s}}
\|\sqrt{\rho^-} D u_E^-\|_{L^2(B_s)}+ \sqrt{s} \|\sqrt{\rho^-}D^2 u_E^-\|_{L^2(B_s)}).
\end{equation*}
Hence,
\begin{equation*}
\int_{\TG} (\rho^- D_{\n{n}^-} u^-) [d_h] \,\,\le\,\, C s   (\|\sqrt{\rho^-} D u_E^-\|_{L^2(B_s)}+ s \|\sqrt{\rho^-}D^2 u_E^-\|_{L^2(B_s)}) M_T.
\end{equation*}
We see that \eqref{lemma11inq} follows after using that
\begin{equation*}
 \sum_{T \in \mathcal{T}_h^\Gamma} M_T^2\,\, \le\,\, C \, \|d_h\|_V^2,
\end{equation*}
the inequality \eqref{extension} and the fact that $s \le C \, h_T$ (see Lemma \ref{TGammalessLT}).

In order to complete the proof we need to prove \eqref{MT}. Firstly, by using the triangle inequality we have
\begin{equation*}
 \sqrt{\rho^-} \sqrt{s} \|D[d_h]\|_{L^2(\TG)} \,\,\le\,\,  \sqrt{\rho^-}\sqrt{s} \left(\|D d_h^-\|_{L^2(\TG)}+\| D d_h^+\|_{L^2(\TG)}\right).
\end{equation*}
Then by Lemma \ref{compare}, there exists an edge $e$ of $T$ such that
\begin{equation}\label{aux2}
s \,\,\le\,\, m |e^+|.
\end{equation}
Now let $e=\mathrm{Int}(\partial T\cap \partial K) $, for a $K \in \mathcal{T}_h$. Using that $D d_h^+$ is constant we get
\begin{alignat*}{1}
\| D d_h^+\|_{L^2(\TG)}\,\,&=\,\, \sqrt{s} |D d_h^+|\,\,=\,\,\frac{\sqrt{s}}{\sqrt{|e^+|}} \| D d_h^+\|_{L^2(e^+)} \,\, \le\,\, \sqrt{m} \| D d_h^+\|_{L^2(e^+)}.
\end{alignat*}
According to Lemma \ref{twotri}
\begin{equation}\label{aux1}
|e^+|^2 \,\,\le\,\, \theta \max\{|T^+|, |K^+|\}.
\end{equation}
First, suppose that $|T^+|=\max\{|T^+|, |K^+|\}$. Then,
\begin{equation*}
\| {\nabla_h}  d_h^+\|_{L^2(e^+)} \,\,= \,\,\frac{\sqrt{|e^+|}}{\sqrt{|T^+|}} \|D d_h^+\|_{L^2(T^+)}.
\end{equation*}
 Hence,
\begin{equation*}
\| {\nabla_h}  d_h^+\|_{L^2(e^+)}  \,\,\le \,\,\frac{\sqrt{\theta m}}{\sqrt{s}}  \|D d_h^+\|_{L^2(T^+)}.
\end{equation*}
On the other hand, if $|K^+|=\max\{|T^+|, |K^+|\}$ then we have
\begin{equation*}
\| {\nabla_h}  d_h^+\|_{L^2(e^+)} \,\,\le\,\, \| D (d_h^+ |_T -d_h^+ |_K) \|_{L^2(e^+)}+
\frac{\sqrt{\theta m}}{\sqrt{s}}
\|D d_h^+\|_{L^2(K^+)}.
\end{equation*}
We write $D (d_h |_T -d_h |_K)$ as its normal part and tangential part, then, use an inverse inequality for the tangential part on $e^+$, and have
\begin{equation*}
\| {\nabla_h}  (d_h^+ |_T -d_h^+ |_K) \|_{L^2(e^+)} \,\,\le\,\, \| \llbracket {\nabla_h} d_h \rrbracket \|_{L^2(e^+)} + \frac{C}{|e^+|} \| \llbracket d_h \rrbracket \|_{L^2(e^+)},
\end{equation*}
therefore,
\begin{equation*}
\| {\nabla_h}  d_h^+\|_{L^2(e^+)}\,\, \le\,\, \|\llbracket {\nabla_h}  d_h \rrbracket\|_{L^2(e^+)}+ \frac{\sqrt{\theta m}}{\sqrt{s}} \|D d_h^+\|_{L^2(K^+)}+\frac{C}{|e^+|} \| \llbracket d_h\rrbracket  \|_{L^2(e^+)}.
\end{equation*}
Combining the above inequalities we obtain
\begin{alignat*}{1}
 \sqrt{\rho^- s} \| D d_h^+\|_{L^2(\TG)} \,\,\le & \,\, \sqrt{m |e^+|}\sqrt{\rho^+} \|\llbracket {\nabla_h}  d_h \rrbracket \|_{L^2(e^+)}+\frac{C\sqrt{\rho^+}\sqrt{m}}{\sqrt{|e^+|}} \| \llbracket d_h \rrbracket  \|_{L^2(e^+)} \\
 &+ Cm \sqrt{\theta} ( \|\sqrt{\rho^+} D d_h\|_{L^2(T^+)}+ \|\sqrt{\rho^+} D d_h\|_{L^2(K^+)}),
 \end{alignat*}
where we have used $\rho^- \le \rho^+$, so
\begin{alignat*}{1}
 \sqrt{\rho^- s} \| D d_h^+\|_{L^2(\TG)} \,\,\le & \,\, C \sum_{e \subset \partial T}  ( \sqrt{m |e^+|}\sqrt{\rho^+} \|\llbracket {\nabla_h}  d_h \rrbracket \|_{L^2(e^+)}+\frac{\sqrt{\rho^+}\sqrt{m}}{\sqrt{|e^+|}} \| \llbracket d_h \rrbracket  \|_{L^2(e^+)} )\\
 &+  m \sqrt{\theta} \|\sqrt{\rho^+} \nabla_h d_h\|_{L^2(\wT^{+})}.
 \end{alignat*}

Using the same argument we can prove
\begin{alignat*}{1}
 (\rho^- s)^{1/2} \| D d_h^-\|_{L^2(\TG)} \,\, \le & \,\, C \sum_{e \subset \partial T}  ( \sqrt{m |e^-|}\sqrt{\rho^-} \|\llbracket D d_h \rrbracket \|_{L^2(e^-)}+\frac{\sqrt{\rho^-}\sqrt{m}}{\sqrt{|e^-|}} \| \llbracket d_h \rrbracket  \|_{L^2(e^-)} ) \\
 &+   Cm \sqrt{\theta} \|\sqrt{\rho^-} \nabla_h d_h\|_{L^2(\wT^{-})}.
 \end{alignat*}
Combining the two last inequalities we obtain {\eqref{MT}}.
\end{proof}

Now we would like to state a corollary of Theorem \ref{maintheorem}. We first need some regularity results. We start with a standard energy estimate.
\begin{proposition}
Let $u$ solve \eqref{Problem}, then the following energy estimate holds
\begin{equation}\label{energyinequality}
\sqrt{\rho^+}\|D u\|_{L^2(\Omega^+)} +\sqrt{\rho^-} \|D u\|_{L^2(\Omega^-)}\,\, \le\,\, \frac{C}{\sqrt{\rho^-}} \|f\|_{L^2(\Omega)}
\end{equation}
where $C$ is independent  of $\rho^\pm$.
\end{proposition}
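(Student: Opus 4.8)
The plan is to use the standard weak formulation and test with the solution itself. Recall that $u\in H^1_0(\Omega)$ is characterized by $\int_\Omega \rho\,\nabla u\cdot\nabla v = \int_\Omega f v$ for all $v\in H^1_0(\Omega)$, which follows from \eqref{Problem:a}--\eqref{Problem:d} by integration by parts (the interface terms cancel because of the continuity of $u$ and of $\rho D_{\n{n}}u$ across $\Gamma$). Choosing $v=u$ gives
\begin{equation*}
\rho^-\|\nabla u\|_{L^2(\Omega^-)}^2 + \rho^+\|\nabla u\|_{L^2(\Omega^+)}^2 \,\,=\,\, \int_\Omega f u \,\,\le\,\, \|f\|_{L^2(\Omega)}\,\|u\|_{L^2(\Omega)}.
\end{equation*}

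Next I would bound the right-hand side using the Poincar\'e inequality $\|u\|_{L^2(\Omega)}\le C_P\|\nabla u\|_{L^2(\Omega)}$ (valid since $u=0$ on $\partial\Omega$), and then exploit $\rho^+\ge\rho^->0$ to write
\begin{equation*}
\|\nabla u\|_{L^2(\Omega)}^2 \,\,=\,\, \|\nabla u\|_{L^2(\Omega^-)}^2 + \|\nabla u\|_{L^2(\Omega^+)}^2 \,\,\le\,\, \frac{1}{\rho^-}\Big(\rho^-\|\nabla u\|_{L^2(\Omega^-)}^2 + \rho^+\|\nabla u\|_{L^2(\Omega^+)}^2\Big).
\end{equation*}
Setting $A^2:=\rho^-\|\nabla u\|_{L^2(\Omega^-)}^2 + \rho^+\|\nabla u\|_{L^2(\Omega^+)}^2$, the two displays combine to $A^2\le \frac{C_P}{\sqrt{\rho^-}}\|f\|_{L^2(\Omega)}\,A$, hence $A\le \frac{C_P}{\sqrt{\rho^-}}\|f\|_{L^2(\Omega)}$. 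Finally $\sqrt{\rho^+}\|\nabla u\|_{L^2(\Omega^+)} + \sqrt{\rho^-}\|\nabla u\|_{L^2(\Omega^-)} \le \sqrt{2}\,A \le \frac{\sqrt{2}\,C_P}{\sqrt{\rho^-}}\|f\|_{L^2(\Omega)}$, which is \eqref{energyinequality} with $C=\sqrt{2}\,C_P$ depending only on $\Omega$.

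There is no real obstacle here; the only point requiring care is to keep track of the weights so that the contrast $\rho^+/\rho^-$ never enters except through the single factor $1/\sqrt{\rho^-}$ — this is precisely why one divides by $\rho^-$ (rather than $\rho^+$) when passing from the unweighted $H^1$-seminorm to the weighted energy quantity $A$, using $\rho^+\ge\rho^-$. The constant $C$ is manifestly independent of $\rho^\pm$.
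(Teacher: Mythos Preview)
Your proof is correct and is exactly the standard energy argument the paper has in mind; the paper in fact does not spell out a proof of this proposition, merely labeling it a ``standard energy estimate.'' Your handling of the weights (dividing by $\rho^-$ and using $\rho^+\ge\rho^-$) is precisely the point, and the constant $C=\sqrt{2}\,C_P$ depends only on $\Omega$ as required.
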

At this point we recall that we are assuming that $\rho^- \le \rho^+$. In fact, a better energy estimate holds as we prove in the next proposition.

\begin{proposition}\label{prop3}
Let $u$ solve \eqref{Problem}, then the following energy estimate holds
\begin{equation}\label{energyinequality2}
\rho^+\|D u\|_{L^2(\Omega^+)} +\rho^- \|D u\|_{L^2(\Omega^-)}\,\, \le\,\, C \,  \|f\|_{L^2(\Omega)}.
\end{equation}
\end{proposition}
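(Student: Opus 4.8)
The plan is to prove the two halves of \eqref{energyinequality2} separately, since the $\Omega^-$ part is essentially free and the $\Omega^+$ part carries all the content. For $\Omega^-$: keeping the term $\sqrt{\rho^-}\,\|Du\|_{L^2(\Omega^-)}$ in \eqref{energyinequality} and multiplying the inequality by $\sqrt{\rho^-}$ gives at once
\[
\rho^-\|Du\|_{L^2(\Omega^-)}\,\le\, C\,\|f\|_{L^2(\Omega)} .
\]
So the real task is $\rho^+\|Du\|_{L^2(\Omega^+)}\le C\|f\|_{L^2(\Omega)}$. Here one must \emph{not} argue through the global energy identity $\rho^+\|Du\|_{L^2(\Omega^+)}^2+\rho^-\|Du\|_{L^2(\Omega^-)}^2=(f,u)$, because it treats the two gradient energies symmetrically and only recovers $\rho^+\|Du\|_{L^2(\Omega^+)}\le C\sqrt{\rho^+/\rho^-}\,\|f\|$, which degenerates in the contrast. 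Instead I localize the estimate to $\Omega^+$ and feed into it the flux across $\Gamma$, which is already $O(\|f\|)$ independently of $\rho^\pm$.

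The key object is the flux $\sigma:=\rho^-\nabla u^-$ on $\Omega^-$. Since $-\rho^-\Delta u^-=f^-$, we have $\sigma\in H(\mathrm{div};\Omega^-)$ with $\mathrm{div}\,\sigma=-f^-$, so its normal trace on $\partial\Omega^-$ is well defined and
\[
\|\sigma\cdot\n{n}\|_{H^{-1/2}(\partial\Omega^-)}\,\le\, C\big(\|\sigma\|_{L^2(\Omega^-)}+\|f^-\|_{L^2(\Omega^-)}\big)\,\le\, C\,\|f\|_{L^2(\Omega)},
\]
where the last step uses $\rho^-\|Du\|_{L^2(\Omega^-)}\le C\|f\|$. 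Since $\Gamma\cap\partial\Omega=\emptyset$, the curve $\Gamma$ is a union of connected components of $\partial\Omega^-$, hence the restriction $H^{-1/2}(\partial\Omega^-)\to H^{-1/2}(\Gamma)$ is bounded and $\|\sigma\cdot\n{n}\|_{H^{-1/2}(\Gamma)}\le C\|f\|$. By the transmission condition \eqref{Problem:d}, the flux of $u^+$ on $\Gamma$ is $\eta:=\rho^+\nabla u^+\cdot\n{n}^+=-\sigma\cdot\n{n}^-$ on $\Gamma$, so $\|\eta\|_{H^{-1/2}(\Gamma)}\le C\|f\|$.

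Next I regard $u^+$ as the solution on $\Omega^+$ of $-\rho^+\Delta u^+=f^+$ with the Neumann-type datum $\rho^+\nabla u^+\cdot\n{n}^+=\eta$ on $\Gamma$ (and, in the configuration where $\Omega^+$ touches $\partial\Omega$, the homogeneous Dirichlet datum there). Testing the variational formulation with $q:=u^+$ — or with $q:=u^+$ minus its mean over $\Omega^+$ in the case $\Gamma=\partial\Omega^+$, which is a pure Neumann problem whose compatibility $\int_{\Omega^+}f^++\int_\Gamma\eta=0$ holds automatically by integrating the equation over $\Omega^+$ — gives
\[
\rho^+\|\nabla u^+\|_{L^2(\Omega^+)}^2=\int_{\Omega^+}f^+q+\int_\Gamma\eta\,q\;\le\;\big(\|f^+\|_{L^2(\Omega^+)}+\|\eta\|_{H^{-1/2}(\Gamma)}\big)\,\|q\|_{H^1(\Omega^+)} .
\]
In either case $\|q\|_{H^1(\Omega^+)}\le C\|\nabla u^+\|_{L^2(\Omega^+)}$ by the appropriate Poincaré (vanishing on $\partial\Omega$) or Poincaré--Wirtinger inequality, together with the trace theorem for the boundary term; cancelling one factor of $\|\nabla u^+\|_{L^2(\Omega^+)}$ yields $\rho^+\|\nabla u^+\|_{L^2(\Omega^+)}\le C(\|f^+\|+\|\eta\|_{H^{-1/2}(\Gamma)})\le C\|f\|$. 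Adding this to the $\Omega^-$ bound proves \eqref{energyinequality2}.

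I expect the main obstacle to be conceptual rather than computational: one has to recognize that the symmetric energy identity cannot deliver a contrast-independent bound on the $\Omega^+$ gradient, and that the way out is to transport to $\Gamma$, via the transmission condition, a flux that is \emph{already} controlled by $\|f\|$ independently of $\rho^\pm$. The remaining points requiring care are (i) that $\Gamma\cap\partial\Omega=\emptyset$ makes the $H^{-1/2}$-restriction to $\Gamma$ legitimate, and (ii) the two geometric configurations (whether $\Gamma$ encloses $\Omega^+$ or $\Omega^-$), which only affect which of $\partial\Omega^+,\partial\Omega^-$ contains $\partial\Omega$ and hence which Poincaré-type inequality is invoked on $\Omega^+$; the structure of the argument is identical in both. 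If one prefers to avoid duality in $H^{-1/2}$, the boundary term $\int_\Gamma\eta\,q$ can instead be handled by extending $q$ from $\Omega^+$ across $\Gamma$ into $\Omega^-$ by a bounded $H^1$-extension $\tilde q$ vanishing on $\partial\Omega$, after which integration by parts on $\Omega^-$ turns it into $\int_{\Omega^-}f^-\tilde q-\int_{\Omega^-}\rho^-\nabla u^-\cdot\nabla\tilde q$, again bounded by $C\|f\|\,\|q\|_{H^1(\Omega^+)}$.
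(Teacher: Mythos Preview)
Your proof is correct. The paper's argument is precisely the one you sketch at the end as an alternative: it takes $q=u^+-\overline{u^+}$ (when $\Omega^+$ is the inclusion), extends it to $v\in H_0^1(\Omega)$ by a bounded $H^1$-extension, tests the \emph{global} variational identity
\[
\int_{\Omega^-}\rho^-\,Du\cdot Dv+\int_{\Omega^+}\rho^+\,Du\cdot Dv=\int_\Omega f\,v
\]
with this $v$, and then bounds the $\Omega^-$ interaction term by $\rho^-\|Du\|_{L^2(\Omega^-)}\,\|Dv\|_{L^2(\Omega^-)}\le C\|f\|\,\|Du^+\|_{L^2(\Omega^+)}$ using the already-established $\Omega^-$ estimate and the extension bound.

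Your main route differs in presentation: you localize to $\Omega^+$, view $u^+$ as the solution of a Neumann problem with datum $\eta=\rho^+\nabla u^+\cdot\n{n}^+$ on $\Gamma$, and control $\|\eta\|_{H^{-1/2}(\Gamma)}$ via the $H(\mathrm{div})$ normal-trace estimate applied to $\sigma=\rho^-\nabla u^-$. The two approaches are dual to each other and rest on the same mechanism---transporting the already $O(\|f\|)$-controlled flux across $\Gamma$ into a local energy estimate on $\Omega^+$. The paper's version is marginally more elementary in that it never invokes $H^{-1/2}$ duality or the $H(\mathrm{div})$ trace theorem; your version has the merit of making the role of the interface flux explicit.
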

{The constant $C$ could depend on the geometry of the subdomains $\Omega^{\pm}$}.
\begin{proof}
We prove this estimate in the case that $\Omega^+$ is the inclusion (i.e. $\partial \Omega^+$ does not intersect $\partial \Omega$). The proof of the other case is similar. First, from the previous proposition we have
\begin{equation}\label{inq99}
\rho^- \|D u\|_{L^2(\Omega^-)}\,\, \le\,\, C\,  \|f\|_{L^2(\Omega)}.
\end{equation}

 Let $w=u^+-\frac{1}{|\Omega^+|}\int_{\Omega^+} u^+ dx$. Therefore, using Poincare's inequality we have
\begin{equation}\label{poincare}
\|w\|_{L^2(\Omega^+)} \le C \|D u\|_{L^2(\Omega^+)}.
\end{equation}
{By means of an extension result (see Lemma 6.37 in \cite{MR737190})}, there exists $v \in H_0^1(\Omega)$ such that
 $v|_{\Omega^+}=w|_{\Omega^+}$ such that
\begin{equation}\label{inq100}
\|v\|_{H^1(\Omega)} \le C \|w\|_{H^1(\Omega^+)}.
\end{equation}
{Applying the variational formulation of problem \eqref{Problem} we have that}
\begin{equation*}
\int_{\Omega^-} \rho^- D u \cdot D v+\int_{\Omega^+} \rho^+ D u \cdot D v dx=\int_{\Omega} f v.
\end{equation*}
{Multiplying by $\rho^+$ it follows that}
\begin{equation*}
{\|\rho^{+}D u\|_{L^2(\Omega^+)}^2}=\rho^+ \int_{\Omega^+} \rho^+ D u \cdot D v dx=\rho^+ \int_{\Omega} f v-\rho^+ \int_{\Omega^-} \rho^- D u \cdot D v
\end{equation*}
Therefore,  we have
\begin{equation*}
\|\rho^+ D u\|_{L^2(\Omega^+)}^2 \le \|f\|_{L^2(\Omega)} \| \rho^+ v\|_{L^2(\Omega)}+ \|\rho^- D u\|_{L^2(\Omega^-)} \|\rho^+ D v\|_{L^2(\Omega)} .
\end{equation*}
By \eqref{inq100} we have
  \begin{equation}
  \|\rho^+ D u\|_{L^2(\Omega^+)}^2  \le C (\|f\|_{L^2(\Omega)}+ \|\rho^- D u\|_{L^2(\Omega^-)}) \rho^+ \|w\|_{H^1(\Omega^+)}.
 \end{equation}
Hence, using \eqref{poincare} we get
  \begin{equation}
  \|\rho^+ D u\|_{L^2(\Omega^+)}^2  \le C (\|f\|_{L^2(\Omega)}+ \|\rho^- D u\|_{L^2(\Omega^-)})  \|\rho^+ D u\|_{L^2(\Omega^+)}.
 \end{equation}
The proof is complete after applying \eqref{inq99}.
\end{proof}

 We  will also need to state and $H^2$  regularity estimates which can { essentially be proved if one combines results in  \cite{MR2684351} and \cite{MR1929889}. Here we give an alternative argument using  the results in \cite{MR1929889} and classical regularity theory.}

\begin{proposition}\label{prop1}
In addition to the assumptions already made in this article, assume further that $\Omega$ is convex. Let $u$ solve \eqref{Problem}. Then, the following regularity estimate holds
\begin{equation}\label{H2regularity}
\rho^+\|D^2 u\|_{L^2(\Omega^+)} +\rho^-\|D^2 u\|_{L^2(\Omega^-)} \,\,\le\,\, \Cr \, \|f\|_{L^2(\Omega)},
\end{equation}
where $C$ is independent of $\rho^\pm$.
\end{proposition}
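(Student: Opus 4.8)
The plan is to establish the two weighted bounds $\rho^-\|D^2u\|_{L^2(\Omega^-)}\le C\|f\|_{L^2(\Omega)}$ and $\rho^+\|D^2u\|_{L^2(\Omega^+)}\le C\|f\|_{L^2(\Omega)}$ separately, working in the case where $\Omega^+$ is the inclusion (so $\partial\Omega^+=\Gamma$ and $\partial\Omega^-=\partial\Omega\cup\Gamma$ with the two components at positive distance); the other case is analogous. It is essential that the controlled quantity is the second‑derivative \emph{seminorm} and not the full $H^2$ norm: on the inclusion, $u^+$ is close to a constant of size of order $\|f\|_{L^2(\Omega)}/\rho^-$, so $\rho^+\|u^+\|_{H^2(\Omega^+)}$ genuinely blows up with the contrast, while $\rho^+\|D^2u^+\|_{L^2(\Omega^+)}$ does not. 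Throughout, the energy bounds of Proposition \ref{prop3} (and the weaker \eqref{energyinequality}) are used to absorb all first‑order contributions, and $\rho^-\le\rho^+$ is used at every step to keep the constants independent of $\rho^+/\rho^-$; the remaining constants are allowed to depend on the geometry of $\Omega^\pm$, on $\kappa$ and on $r$.

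Let $g:=u|_\Gamma$ be the (single‑valued, since $[u]=0$) trace of $u$ on $\Gamma$. On the inclusion, $u^+$ solves the Dirichlet problem $-\Delta u^+=f^+/\rho^+$ in $\Omega^+$ with $u^+=g$ on $\Gamma$; subtracting the mean of $g$ over $\Gamma$ and applying classical elliptic regularity on the $\mathcal{C}^2$ domain $\Omega^+$, together with Poincar\'e--Wirtinger on the closed curve $\Gamma$, gives $\|D^2u^+\|_{L^2(\Omega^+)}\le C\bigl(\|f\|_{L^2(\Omega)}/\rho^+ + |g|_{H^{3/2}(\Gamma)}\bigr)$. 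On the exterior, $u^-$ solves $-\Delta u^-=f^-/\rho^-$ in $\Omega^-$ with $u^-=0$ on $\partial\Omega$ and $u^-=g$ on $\Gamma$; since $\Omega$ is convex (so $\partial\Omega$ has no re‑entrant corner) and $\Gamma$ is $\mathcal{C}^2$ and disjoint from $\partial\Omega$, classical regularity gives $\|D^2u^-\|_{L^2(\Omega^-)}\le C\bigl(\|f\|_{L^2(\Omega)}/\rho^- + \|g\|_{H^{3/2}(\Gamma)}\bigr)$. The $L^2(\Gamma)$ part of $\|g\|_{H^{3/2}(\Gamma)}$ is harmless for the $\Omega^-$ estimate: by the trace theorem, Poincar\'e (using $u^-=0$ on $\partial\Omega$) and \eqref{energyinequality2}, one has $\rho^-\|g\|_{L^2(\Gamma)}\le C\rho^-\|\nabla u^-\|_{L^2(\Omega^-)}\le C\|f\|_{L^2(\Omega)}$. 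Hence, using $\rho^-\le\rho^+$, both desired bounds reduce to the single contrast‑independent interface‑trace estimate
\begin{equation*}
\rho^+\,|g|_{H^{3/2}(\Gamma)}\;\le\; C\,\|f\|_{L^2(\Omega)} .
\end{equation*}

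Proving this last estimate is the heart of the matter, and it is here that we invoke the interface regularity results of \cite{MR1929889}. The difficulty is precisely the contrast‑independent control of the interface trace: by the transmission conditions the tangential derivative $D_{\n{t}}u$ is single‑valued on $\Gamma$ and the normal flux $\rho^+D_{\n{n}^+}u^+=-\rho^-D_{\n{n}^-}u^-$ is single‑valued as well, and, identifying $\Gamma$ with a circle via arc length, $|g|_{H^{3/2}(\Gamma)}$ is controlled by the $H^{1/2}(\Gamma)$‑norm of $D_{\n{t}}u$, which is a boundary trace of $\nabla u$; the estimates of \cite{MR1929889} bound exactly this trace, weighted by $\rho^+$, by $C\|f\|_{L^2(\Omega)}$ uniformly in the coefficients (and, if a sharper geometric constant is wanted, after localizing to the tubular neighborhood of Lemma \ref{tubularlemma} and using \eqref{extension}). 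The main obstacle I anticipate is the bookkeeping needed to prevent a factor $\rho^+/\rho^-$ from entering through the trace and fractional‑Sobolev interpolation inequalities on $\Gamma$; this is handled by systematically exploiting $\rho^-\le\rho^+$ together with the weighted energy estimates \eqref{energyinequality}--\eqref{energyinequality2}. Substituting the interface‑trace bound into the two classical regularity estimates of the previous paragraph yields $\rho^\pm\|D^2u\|_{L^2(\Omega^\pm)}\le C\|f\|_{L^2(\Omega)}$, which is \eqref{H2regularity}.
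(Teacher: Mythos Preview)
Your reduction to the single interface estimate $\rho^+|g|_{H^{3/2}(\Gamma)}\le C\|f\|_{L^2(\Omega)}$ is correct in the sense that this estimate is \emph{equivalent} to what must be shown, but you have not proven it --- and the vague appeal to \cite{MR1929889} does not close the gap. If you try to control $|g|_{H^{3/2}(\Gamma)}\simeq\|D_{\n t}u\|_{H^{1/2}(\Gamma)}$ by tracing $\nabla u$ from the $\Omega^-$ side you get only a $\rho^-$ weight (since the available input is $\rho^-\|u\|_{H^2(\Omega^-)}\le C\|f\|$), losing precisely a factor $\rho^+/\rho^-$; tracing from the $\Omega^+$ side requires $\rho^+(\|D^2u^+\|+\|\nabla u^+\|)$, which is the quantity you are trying to bound and is therefore circular. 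The ``bookkeeping obstacle'' you anticipate is not bookkeeping: the Dirichlet route gives no mechanism to convert the $\rho^+$ weight into $\rho^-$.

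The paper sidesteps this by reformulating on the inclusion as a \emph{Neumann} problem rather than a Dirichlet one. Setting $w=u^+-\overline{u^+}$, one has $-\rho^+\Delta w=f$ in $\Omega^+$ with Neumann data $\rho^+\nabla u^+\cdot\n n$ on $\Gamma$. The flux jump condition now does the work you are missing: $\rho^+\nabla u^+\cdot\n n=\rho^-\nabla u^-\cdot\n n$ on $\Gamma$, so the Neumann data comes already weighted by $\rho^-$, and a standard trace bounds $\|\rho^-\nabla u^-\cdot\n n\|_{H^{1/2}(\Gamma)}$ by $\rho^-\|u^-\|_{H^2(\Omega^-)}\le C\|f\|$. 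Classical Neumann regularity on $\Omega^+$ then gives $\rho^+\|w\|_{H^2(\Omega^+)}\le C\|f\|$, with the lower-order term $\rho^+\|w\|_{H^1(\Omega^+)}$ handled by Poincar\'e and Proposition~\ref{prop3}. The moral is that the Dirichlet data $g$ carries no contrast information, whereas the Neumann data does; that is the missing idea in your proposal.
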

{The constant $\Cr$ could depend on the geometry of the subdomains $\Omega^{\pm}$}.
\begin{proof}
{
We consider two cases: \\
Case 1: $\Omega^-$ is the inclusion (i.e. $\partial \Omega^- \cap \partial \Omega =\emptyset$) \\
In this case, \eqref{H2regularity} follows from (2.35) in \cite{MR1929889}.}
\vskip2mm

\noindent Case 2: $ \Omega^+$ is the inclusion\\
From (2.36) in \cite{MR1929889} we have
\begin{equation}\label{prop1inq1}
{\rho^- \|u\|_{H^2(\Omega^-)} \le C \, \|f\|_{L^2(\Omega)}.}
 \end{equation}

 {Let $\bar{u}^{+}= \frac{1}{|\Omega^+|} \int_{\Omega^+} u^+ dx$. Let  $w=u^+-\bar{u}^{+}$. Note that $w$ satisfies
 \begin{alignat*}{3}
   -\rho^{+} \Delta w &= f \qquad &\mbox{in } & \Omega^{+}, \\
                                          \rho^+ \nabla  w  \cdot \n{n}&= \rho^+ \nabla u^+ \cdot \n{n}   \qquad          & \mbox{on } &\partial \Omega^+.
\end{alignat*}
 }
{
From a standard regularity result (see  (2.3.3.1) in \cite{MR775683}) we have
\begin{equation*}
{\rho^+ \|w\|_{H^2(\Omega^+)} \le C( \|f\|_{L^2(\Omega^+)}+ \|\rho^+ \nabla u^+ \cdot \n{n}\|_{H^{1/2}(\partial \Omega^+)} + \rho^+ \| w\|_{H^1(\Omega^+)}).}
\end{equation*}
Using the jump condition we have
\begin{equation*}
 \|\rho^+ \nabla u^+ \cdot \n{n}\|_{H^{1/2}(\partial \Omega^+)} = \| \rho^- \nabla u^- \cdot \n{n}\|_{H^{1/2}(\partial \Omega^+).}
\end{equation*}
By a standard trace inequality we have
\begin{equation*}
 \| \rho^- \nabla u^- \cdot \n{n}\|_{H^{1/2}(\partial \Omega^+)}  \le \rho^- \| u^-\|_{H^2(\Omega^-)}.
\end{equation*}
Which by \eqref{prop1inq1} gives
\begin{equation*}
\|\rho^+ \nabla u^+ \cdot \n{n}\|_{H^{1/2}(\partial \Omega^+)}  \le  C \, \|f\|_{L^2(\Omega)}.
\end{equation*}
}
{
The estimate
\begin{equation*}
\rho^+ \| w\|_{H^1(\Omega^+)} \le C \|f\|_{L^2(\Omega)}
\end{equation*}
follows from Poincare's inequality and  Proposition \ref{prop3}.
Hence, we have
\begin{equation*}
\rho^+ \|w\|_{H^2(\Omega^+)} \le C \, \|f\|_{L^2(\Omega)}.
\end{equation*}
The result now follows after noting that $\|D^2 u\|_{L^2(\Omega^+)} \le  \|w\|_{H^2(\Omega^+)}$. }
\end{proof}

Combining Theorem \ref{maintheorem} and the previous propositions we have the following corollary.
\begin{corollary} \label{coro1}
Let $u$ be the solution to \eqref{Problem} and $u_h \in V_h$ be its finite element approximation. If $\Omega$ is convex, we have
\begin{equation}
\|u-u_h\|_V  \,\,\le\,\, \frac{C \, h {(1+\kappa)}}{\sqrt{\rho^-}} \|f\|_{L^2(\Omega)}.
\end{equation}
{The constant $C$ depends on the constant $\Ce$ from \eqref{extension} and the constant $\Cr$ from \eqref{H2regularity}}.
\end{corollary}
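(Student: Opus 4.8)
\textbf{Proof proposal for Corollary \ref{coro1}.}
The plan is simply to feed the regularity estimates into the energy bound of Theorem \ref{maintheorem}. Starting point: Theorem \ref{maintheorem} gives
\[
\|u-u_h\|_V \le C\,h(1+\kappa)\Big(\sqrt{\rho^-}(\|Du\|_{L^2(\Omega^-)}+\|D^2u\|_{L^2(\Omega^-)})+\sqrt{\rho^+}(\|Du\|_{L^2(\Omega^+)}+\|D^2u\|_{L^2(\Omega^+)})\Big),
\]
so it remains to bound the four weighted Sobolev quantities on the right by $C\rho^{-\,-1/2}\|f\|_{L^2(\Omega)}$.

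For the first-derivative terms I would invoke the energy estimate \eqref{energyinequality}, which already gives $\sqrt{\rho^+}\|Du\|_{L^2(\Omega^+)}+\sqrt{\rho^-}\|Du\|_{L^2(\Omega^-)}\le C\rho^{-\,-1/2}\|f\|_{L^2(\Omega)}$ directly. For the second-derivative terms I would use Proposition \ref{prop1} (here convexity of $\Omega$ enters), which gives $\rho^+\|D^2u\|_{L^2(\Omega^+)}+\rho^-\|D^2u\|_{L^2(\Omega^-)}\le \Cr\|f\|_{L^2(\Omega)}$, and then convert the weights using $\rho^-\le\rho^+$: writing $\sqrt{\rho^\pm}=\rho^\pm/\sqrt{\rho^\pm}\le \rho^\pm/\sqrt{\rho^-}$ yields
\[
\sqrt{\rho^+}\|D^2u\|_{L^2(\Omega^+)}+\sqrt{\rho^-}\|D^2u\|_{L^2(\Omega^-)}\le \frac{1}{\sqrt{\rho^-}}\big(\rho^+\|D^2u\|_{L^2(\Omega^+)}+\rho^-\|D^2u\|_{L^2(\Omega^-)}\big)\le \frac{\Cr}{\sqrt{\rho^-}}\|f\|_{L^2(\Omega)}.
\]
Substituting both bounds back into the Theorem \ref{maintheorem} estimate and collecting constants (which then depend on $\Ce$ through Theorem \ref{maintheorem} and on $\Cr$) gives the claimed inequality.

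There is no real obstacle here — the corollary is a bookkeeping combination of results already established. The only point requiring a moment's care is the weight conversion $\sqrt{\rho^\pm}\le \rho^\pm/\sqrt{\rho^-}$, which is exactly where the hypothesis $\rho^-\le\rho^+$ is used, and the observation that for the gradient contribution one may use the weaker estimate \eqref{energyinequality} rather than Proposition \ref{prop3}.
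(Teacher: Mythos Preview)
Your proposal is correct and matches the paper's approach: the corollary is stated immediately after Propositions~\ref{prop3} and~\ref{prop1} with the one-line justification ``Combining Theorem~\ref{maintheorem} and the previous propositions we have the following corollary,'' and your argument carries out exactly that combination, including the weight conversion $\sqrt{\rho^\pm}\le \rho^\pm/\sqrt{\rho^-}$ needed to pass from \eqref{H2regularity} to the $\sqrt{\rho^\pm}$-weighted second-derivative terms.
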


\subsection{$L^2$-Error Estimates}\label{L2estimate}

We now prove an $L^2$ error estimate using a duality argument.
\begin{theorem} \label{coro2} Let $u$ be the solution to problem \eqref{Problem} and $u_h \in V_h$ be its
finite element approximation solution to \eqref{fem}. Assume that $\Omega$ is convex. Then, there exists a constant $C>0$ independent of $h$, $\rho^-$ and $\rho+$, such that
\begin{equation}\label{mainerror3}
\|u-u_h\|_{L^2(\Omega)} \,\, \le\,\,  \frac{C\, h^2{(1+\kappa)^2} }{\rho^-} \|f\|_{L^2(\Omega)}.
\end{equation}
{The constant $C$ depends on $\Ce$ from \eqref{extension} and $\Cr$ from \eqref{H2regularity}\revblue{.}}
\end{theorem}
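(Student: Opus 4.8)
The plan is to run a standard Aubin--Nitsche duality argument, carefully tracking the powers of $\rho^-$ so that the contrast-independence of the earlier energy estimates is inherited by the $L^2$ estimate. Let $e = u - u_h$. I would introduce the dual problem: find $z$ solving $-\rho^\pm \Delta z^\pm = e$ in $\Omega^\pm$, with $z = 0$ on $\partial\Omega$, $[z] = 0$ and $[\rho D_{\n n} z] = 0$ on $\Gamma$. Since $\Omega$ is convex, Proposition \ref{prop1} applies to $z$ (with right-hand side $e \in L^2(\Omega)$), giving $\rho^+\|D^2 z\|_{L^2(\Omega^+)} + \rho^-\|D^2 z\|_{L^2(\Omega^-)} \le \Cr \|e\|_{L^2(\Omega)}$, and likewise Proposition \ref{prop3} gives $\rho^+\|Dz\|_{L^2(\Omega^+)} + \rho^-\|Dz\|_{L^2(\Omega^-)} \le C\|e\|_{L^2(\Omega)}$. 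In particular, splitting both terms off, $\sqrt{\rho^-}(\|Dz\|_{L^2(\Omega^-)} + \|D^2 z\|_{L^2(\Omega^-)}) + \sqrt{\rho^+}(\|Dz\|_{L^2(\Omega^+)} + \|D^2 z\|_{L^2(\Omega^+)}) \le (C/\sqrt{\rho^-})\|e\|_{L^2(\Omega)}$ after using $\rho^- \le \rho^+$ to bound the $\Omega^+$ contributions by the same quantity.

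Next I would test: $\|e\|_{L^2(\Omega)}^2 = (e, -\rho\Delta_h z)$ interpreted piecewise. Integrating by parts elementwise and assembling interface and edge contributions, one identifies $\|e\|_{L^2(\Omega)}^2 = a_h(e, z) + (\text{consistency terms coming from } z)$. More precisely, since $z$ solves the transmission problem exactly, the same manipulation that produced Lemma \ref{inconsistency} shows $a_h(v, z) = (v, -\rho\Delta_h z) + \sum_{T\in\ThG}\int_{\TG}(\rho^- D_{\n n^-} z^-)[v]$ for $v \in V_h + V$, so $\|e\|_{L^2(\Omega)}^2 = a_h(e, z) - \sum_{T\in\ThG}\int_{\TG}(\rho^- D_{\n n^-} z^-)[e]$. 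The second sum is bounded exactly as in Lemma \ref{inconsistency} (with $u$ replaced by $z$ and $d_h$ replaced by the $V$-part of $e$; note $[u]=0$ so $[e] = -[u_h]$ which is controlled by the jump terms in $\|e\|_V$), yielding a bound $\le C h\kappa \sqrt{\rho^-}(\|Dz\|_{L^2(\Omega^-)} + h\|D^2 z\|) \cdot \|e\|_V$-type expression; using the dual regularity this is $\le (Ch(1+\kappa)/\sqrt{\rho^-})\|e\|_{L^2(\Omega)}\,\|e\|_V$.

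For the main term $a_h(e, z)$, I would use Galerkin orthogonality in the form $a_h(e, v_h) = -\sum_{T\in\ThG}\int_{\TG}(\rho^- D_{\n n^-}u^-)[v_h]$ for $v_h \in V_h$ (this is just Lemma \ref{inconsistency} again). Taking $v_h = I_h z$ gives $a_h(e,z) = a_h(e, z - I_h z) - \sum_{T\in\ThG}\int_{\TG}(\rho^- D_{\n n^-}u^-)[I_h z]$. The first piece is handled by continuity (Lemma \ref{lemma:continuity}): $a_h(e, z - I_h z) \le C\|e\|_W \|z - I_h z\|_W$; here I need $\|e\|_W$, which requires a small extra argument since Theorem \ref{maintheorem} only bounds $\|e\|_V$ — but $\|u - I_h u\|_W$ is bounded by Lemma \ref{normestimate} and $\|I_h u - u_h\|_W$ can be controlled by $\|I_h u - u_h\|_V$ up to the extra $W$-seminorm, which for $V_h$ functions is equivalent (via Lemma \ref{twotri}, as in the continuity proof) to a piece of the $V$-norm; so $\|e\|_W \le C h(1+\kappa)(\sqrt{\rho^-}(\dots^-) + \sqrt{\rho^+}(\dots^+))$, which by Corollary \ref{coro1}'s inputs is $\le (Ch(1+\kappa)/\sqrt{\rho^-})\|f\|_{L^2(\Omega)}$. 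Similarly $\|z - I_h z\|_W \le (Ch(1+\kappa)/\sqrt{\rho^-})\|e\|_{L^2(\Omega)}$ by Lemma \ref{normestimate} applied to $z$ together with the dual regularity. The remaining sum $\sum_T \int_{\TG}(\rho^- D_{\n n^-}u^-)[I_h z]$ is estimated like the inconsistency term: $[I_h z]$ vanishes to second order on $L_T$ so $|[I_h z](x)| \le C\kappa s^2 |D[I_h z]|$ on $\TG$, and $\sqrt{\rho^-}\sqrt{s}\|D[I_h z]\|_{L^2(\TG)}$ is controlled by the trace/inverse estimates of Lemma \ref{inconsistency} in terms of $\|I_h z\|_V \le \|z\|_V + \|z - I_h z\|_V \le (C/\sqrt{\rho^-})\|e\|_{L^2(\Omega)}$, producing another factor $Ch(1+\kappa)$. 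Collecting: $\|e\|_{L^2}^2 \le (Ch^2(1+\kappa)^2/\rho^-)\|f\|_{L^2(\Omega)}\|e\|_{L^2(\Omega)}$, and dividing gives the claim.

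The main obstacle I anticipate is the bookkeeping around the augmented $W$-norm of the error: Theorem \ref{maintheorem} is stated only in the $V$-norm, so I must either (a) upgrade it to a $\|e\|_W$ bound by showing the extra $\{\nabla_h v\}\cdot\n n$ edge terms are controlled for the discrete error $I_h u - u_h \in V_h$ exactly as in the continuity proof (using that $\nabla_h$ is constant on sub-elements plus Lemma \ref{twotri}), and combine with Lemma \ref{normestimate} for the interpolation part; or (b) avoid $\|e\|_W$ entirely by writing $a_h(e, z - I_h z)$ as $a_h(u - I_h u, z - I_h z) + a_h(I_h u - u_h, z - I_h z)$ and bounding the first via full continuity and the second via the $V$-norm continuity bound \eqref{continuity} (which only needs $\|z - I_h z\|_W$ on one side and $\|I_h u - u_h\|_V$ on the other). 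Option (b) is cleaner and I would take it. A secondary technical point is ensuring that the duality identity $\|e\|_{L^2}^2 = a_h(e,z) - \sum_T \int_{\TG}(\rho^- D_{\n n^-} z^-)[e]$ is derived correctly for $e = u - u_h$ with $u_h$ only in $V_h$ (discontinuous across edges in $\EhG$), which requires being careful with the edge terms in the elementwise integration by parts — but this is the exact computation already carried out in the proof of Lemma \ref{inconsistency}, applied with the roles of primal and dual solution interchanged.
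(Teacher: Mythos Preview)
Your overall skeleton (duality, Galerkin orthogonality, split off $I_hz$) is the right idea and matches the paper's strategy, but the treatment of the two interface consistency terms loses a full power of $h$, so the argument as written yields only an $O(h)$ bound.

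Concretely, consider your term $R_2 := \sum_{T\in\ThG}\int_{\TG}(\rho^-D_{\n n^-}u^-)[I_hz]$. Applying Lemma~\ref{inconsistency} with $d_h = I_hz$ gives
\[
R_2 \le Ch\kappa\,\sqrt{\rho^-}\big(\|Du\|_{L^2(\Omega^-)}+h\|D^2u\|_{L^2(\Omega^-)}+h\|D^2u\|_{L^2(\Omega^+)}\big)\,\|I_hz\|_V.
\]
The $u$-factor is $\le (C/\sqrt{\rho^-})\|f\|_{L^2(\Omega)}$ by regularity, but $\|I_hz\|_V \approx \|z\|_V = \|\sqrt{\rho}\nabla z\|_{L^2(\Omega)} \le (C/\sqrt{\rho^-})\|e\|_{L^2(\Omega)}$ is only $O(1)$ in $h$, not $O(h)$. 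Hence $R_2 \le (Ch\kappa/\rho^-)\|f\|_{L^2}\|e\|_{L^2}$, one power of $h$ short. The same problem afflicts your term $R_1 := \sum_T\int_{\TG}(\rho^-D_{\n n^-}z^-)[e]$: since $[e]=-[u_h]$, the $M_T$-bound from Lemma~\ref{inconsistency} applies to $d_h=u_h$, but $M_T(u_h)$ contains the patch gradient terms $\|\sqrt{\rho^\pm}\nabla_h u_h\|_{L^2(\omega_T^\pm)}$, and $\sum_T M_T(u_h)^2 \le C\|u_h\|_V^2$, \emph{not} $C\|e\|_V^2$. Your parenthetical ``$[e]=-[u_h]$ which is controlled by the jump terms in $\|e\|_V$'' is only half right: the edge-jump pieces of $M_T(u_h)$ do coincide with those of $e$ (since $u$ is smooth across edges), but the bulk gradient pieces do not, and those are $O(1)$.

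The paper recovers the missing $h$ by \emph{not} using the $M_T$ machinery for these terms. After Cauchy--Schwarz on $\Gamma$ and the pointwise bound $\|[I_h\phi]\|_{L^2(\TG)} \le C\kappa h_T^2\|D[I_h\phi]\|_{L^2(\TG)}$, it writes $D[I_h\phi] = D[I_h\phi-\phi] + D[\phi]$ and bounds each piece by trace inequalities plus the approximation estimates of Lemma~\ref{localcurve} (for the first) and direct regularity of $\phi$ (for the second). This yields $\sqrt{\rho^-}\bigl(\sum_T\|[I_h\phi]\|_{L^2(\TG)}^2\bigr)^{1/2} \le (Ch^2\kappa/\sqrt{\rho^-})\|e\|_{L^2}$, and hence the analogue of $R_2$ is $O(h^2)$. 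The paper also sidesteps your $R_1$ entirely by starting from the identity $\|e\|_{L^2}^2 = a_h(\phi,u) - a_h(\phi_h,u_h)$ with the \emph{discrete} dual $\phi_h$, which produces a five-term decomposition whose consistency pieces are $a_h(u-u_h,I_h\phi)$ and its symmetric twin $a_h(\phi-\phi_h,I_hu)$ --- both of the $R_2$ type and both handled by the add/subtract trick above. Your $R_1$ can be salvaged along the same lines (split $[u_h]=[u_h-I_hu]+[I_hu]$, apply $M_T$ to the first piece with $d_h=I_hu-u_h$ so that $\|d_h\|_V\le\|e\|_V+\|u-I_hu\|_V=O(h)$, and handle $[I_hu]$ via approximation as above), but that is precisely the argument missing from your proposal.
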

\begin{proof}
Let $\phi$ be a solution of the problem
\begin{subequations}\label{Problemphi}
\begin{alignat}{3}
    -\rho^{\pm} \Delta \phi^{\pm} &= (u-u_h)^{\pm} \qquad &\mbox{in } &\Omega^{\pm}, \label{Problemphi:a} \\
                                           \phi &=0            &\mbox{on }&\partial \Omega, \label{Problemphi:b}\\
                            \left[\phi\right]&=0            &\mbox{on }&\Gamma, \label{Problemphi:c} \\
  \left[\rho D_{\n{n}}  \phi  \right]&=0            &\mbox{on }&\Gamma. \label{Problemphi:d}
\end{alignat}
\end{subequations}
We have
\begin{equation*}
\|u-u_h\|_{L^2(\Omega)}^2=\int_{\Omega}( u-u_h) u-\int_{\Omega}(u-u_h) u_h=a_h(\phi, u)-a_h(\phi_h, u_h),
\end{equation*}
where $\phi_h$ is the finite element approximation of $\phi$. Therefore, we see that
\begin{alignat}{1}
\|u-u_h\|_{L^2(\Omega)}^2\,\,=&\,\,a_h(\phi, u-u_h)+a_h(\phi-\phi_h,u_h) \label{u-uhL2} \\
\,\,=&\,\,a_h(\phi-\phi_h, u-u_h)+a_h(u-u_h, \phi_h)+a_h(\phi-\phi_h,u_h)  \nonumber \\
\,\,=& \,\,a_h(\phi-\phi_h, u-u_h)+a_h(u-u_h, \phi_h-I_h \phi)+a_h(u-u_h, I_h \phi) \nonumber  \\
\,\,  & \,\,+a_h(\phi-\phi_h,u_h-I_h u) +a_h(\phi-\phi_h,I_h u).\nonumber
\end{alignat}
Using continuity of the bilinear form we have
\begin{equation*}
a_h(\phi-\phi_h, u-u_h) \,\,\le\,\, C \|\phi-\phi_h\|_W \, \|u-u_h\|_W.
\end{equation*}
Using the triangle inequality we have
\begin{equation*}
\|u-u_h\|_W \,\,\le\,\, \|u-I_h u\|_W+\|I_h u-u_h\|_W.
\end{equation*}
It is not difficult to show that
\begin{equation*}
\|I_h u-u_h\|_W\,\, \le\,\, C \|I_h u-u_h\|_V.
\end{equation*}
Hence,
\begin{equation*}
\|u-u_h\|_W \,\,\le\,\, \|u-I_h u\|_W+\| u-u_h\|_V.
\end{equation*}
Now using Theorem \ref{normestimate}, Theorem \ref{maintheorem}, \eqref{H2regularity} and \eqref{energyinequality}  we get
\begin{equation*}
\|u-u_h\|_W \,\,\le\,\, \frac{C\, h {(1+\kappa)}}{\sqrt{\rho^-}} \|f\|_{L^2(\Omega)}.
\end{equation*}
Similarly,
\begin{equation*}
\|\phi-\phi_h\|_W \,\,\le\,\, \frac{C\, h {(1+\kappa)}}{\sqrt{\rho^-}} \|u-u_h\|_{L^2(\Omega)},
\end{equation*}
and hence we have a bound for the first term in \eqref{u-uhL2}
\begin{equation*}
a_h(\phi-\phi_h, u-u_h) \,\, \le\,\,  \frac{C\, h^2 {(1+\kappa)^2}}{\rho^-} \|f\|_{L^2(\Omega)} \,\|u-u_h\|_{L^2(\Omega)}.
\end{equation*}
Using  Lemma \ref{inconsistency}, \eqref{H2regularity} and \eqref{energyinequality} we have
\begin{equation*}
a_h(u-u_h, \phi_h-I_h \phi) \,\,\le\,\,  \frac{C\, h {(1+\kappa)}}{\sqrt{\rho^-}} \|f\|_{L^2(\Omega)}  \|I_h \phi-\phi_h\|_V,
\end{equation*}
which implies the following bound for the second term in \eqref{u-uhL2}
\begin{equation*}
a_h(u-u_h, \phi_h-I_h \phi) \,\,\le\,\,  \frac{C\, h^2 {(1+\kappa)^2}}{\rho^-} \|f\|_{L^2(\Omega)} \|u-u_h\|_{L^2(\Omega)}.
\end{equation*}
Analogously, for the fourth term in \eqref{u-uhL2}we have
\begin{equation*}
a_h(\phi-\phi_h,u_h-I_h u) \,\,\le\,\,  \frac{C\, h^2 {(1+\kappa)^2}}{\rho^-} \|f\|_{L^2(\Omega)} \|u-u_h\|_{L^2(\Omega)}.
\end{equation*}

For the third term in \eqref{u-uhL2} we have
\begin{equation}
a_h(u-u_h, I_h \phi) \,\,= \,\,\sum_{T \in \mathcal{T}_h^{\Gamma}}  \int_{\TG} (\rho^- D_{\n{n}^-} u^-) [I_h \phi] .
\end{equation}
Using the Cauchy-Schwarz inequality we obtain
\begin{equation*}\label{3rdtermexpand}
a_h(u-u_h, I_h \phi)\,\, \leq\,\,  \|\sqrt{\rho^-} D_{\n{n}^-} u^-\|_{L^2(\Gamma)}  \sqrt{\rho^-} \left(\sum_{T \in \mathcal{T}_h^{\Gamma}}  \|[I_h \phi]\|_{L^2(\TG)}^2 \right)^{1/2},
\end{equation*}
For the first term in \eqref{3rdtermexpand} we apply a trace inequality to obtain
\begin{equation*}
  \|\sqrt{\rho^-} D_{\n{n}} u^-\|_{L^2(\Gamma)} \,\, \le\,\,  C \sqrt{\rho^-}  (\|D^2 u\|_{L^2(\Omega^-)}+\|D u\|_{L^2(\Omega^-)}) \le \frac{C}{\sqrt{\rho^-}} \|f\|_{L^2(\Omega)}.
\end{equation*}
where we used \eqref{H2regularity}, \eqref{energyinequality} and the fact that $\rho^- \le \rho^+$. Now for the second term in \eqref{3rdtermexpand}, we note that $[I_h \phi]=0$ on $L_T$ and that $L_T$ is at most distance $\mathcal{O}(s^2)$ from $\TG$ where $s=|\TG|$. Therefore we can use, Taylor's theorem to show that
\begin{equation*}
[I_h \phi](x) \,\,\le\,\, C {\|\n{X}''\|_{L^{\infty}}}s^2 |D [I_h \phi]|(x) \quad \forall x \in \TG.
\end{equation*}
Hence,
\begin{equation*}
 {\|[I_h \phi]\|_{L^2(\TG)} \,\,\le\,\, C {\kappa} r^2\|D [I_h \phi]\|_{L^2(\TG)} \le C {\kappa} h_T^2  \|D [I_h \phi]\|_{L^2(\TG)},}
\end{equation*}
where we used that $s \le C \,h_T$.  Consequently, adding and subtracting $D [\phi]$
\begin{equation*}
\left(\sum_{T \in \mathcal{T}_h^{\Gamma}} \|[I_h \phi]\|_{L^2(\TG)}^2 \right)^{1/2}\,\, \le\,\, C {\kappa}  \left(\sum_{T \in \mathcal{T}_h^{\Gamma}}  h_T^4 \|D [I_h \phi-\phi]\|_{L^2(\TG)}^2 \right)^{1/2}+C {\kappa}  \left(\sum_{T \in \mathcal{T}_h^{\Gamma}} {h_T^4} \|D [\phi]\|_{L^2(\TG)}^2 \right)^{1/2}.
\end{equation*}
Observe that
\begin{equation*}h^2 \|D [\phi]\|_{L^2(\Gamma)} \le  C  h^2 (\|D \phi^+\|_{L^2(\Gamma)} +  \|D \phi^-\|_{L^2(\Gamma)} ).
\end{equation*}
Application of trace inequality gives
\begin{equation*}
 \left(\sum_{T \in \mathcal{T}_h^{\Gamma}} \|D [\phi]\|_{L^2(\TG)}^2 \right)^{1/2} \le C  h^2 \left(\|D^2 \phi\|_{L^2(\Omega^-)}+ \|D \phi\|_{L^2(\Omega^-)} + \|D^2 \phi\|_{L^2(\Omega^+)}+ \|D \phi\|_{L^2(\Omega^+)} \right).
\end{equation*}
For the other term we use that $I_h \phi|_T=I_T \phi$ and use a trace inequality  to bound
\begin{alignat*}{1}
 \|D [I_h \phi-\phi]\|_{L^2(\TG)}\,\,=\,\,&  \|D [I_T \phi-\phi]\|_{L^2(\TG)}\\
  \le \,\,& \frac{C }{\sqrt{h_T}} (  \| D (I_T \phi-\phi)\|_{L^2(\wT^{-})}+ \| D (I_T \phi-\phi)\|_{L^2(\wT^{+})})\\
                                                                         & \,\,  + C \sqrt{h_T} (\| D^2 \phi\|_{L^2(\wT^-)}+ \| D^2 \phi\|_{L^2(\wT^+)}).
\end{alignat*}
From \eqref{localcurve1} and \eqref{localcurve2} (using that $\rho^- \le \rho^+$) we obtain
\begin{alignat*}{1}
\| D (I_T \phi-\phi)\|_{L^2(\wT^{-})}+ \| D (I_T \phi-\phi)\|_{L^2(\wT^{+})} \,\,\le\,\, &  C h_T (\|D^2 \phi\|_{L^2(\wT^{-})}+ \|D^2 \phi\|_{L^2(\wT^{+})}).
\end{alignat*}
Therefore,
\begin{alignat*}{1}
\left(\sum_{T \in \mathcal{T}_h^{\Gamma}} h_T^4 \|D [I_h \phi-\phi]\|_{L^2(\TG)}^2 \right)^{1/2} \,\,\le\,\, & C\, h^2 {\kappa}( \|D^2 \phi\|_{L^2(\Omega^-)}+ \|D \phi\|_{L^2(\Omega^-)}) \\
& +C\, h^2 (\|D^2 \phi\|_{L^2(\Omega^+)}+ \|D \phi\|_{L^2(\Omega^+)}).
\end{alignat*}
Hence, using the regularity result \eqref{H2regularity} and \eqref{energyinequality} we have
\begin{equation*}
 \sqrt{\rho^-} \left(\sum_{T \in \mathcal{T}_h^{\Gamma}}  \|[I_h \phi]\|_{L^2(\TG)}^2 \right)^{1/2} \,\,\le\,\, \frac{C h^2 {\kappa}}{\sqrt{\rho^-}} \|u-u_h\|_{L^2(\Omega)}.
\end{equation*}
Thus, we obtain the bound for the third term in \eqref{u-uhL2}
\begin{equation*}
a_h(u-u_h, I_h \phi) \,\,\le\,\, \frac{C h^2{\kappa} }{\rho^-} \|f\|_{L^2(\Omega)} \|u-u_h\|_{L^2(\Omega)}.
\end{equation*}
In a similar fashion we can prove
\begin{equation*}
a_h(\phi-\phi_h, I_h u) \,\,\leq\,\,\frac{C h^2 {\kappa} }{\rho^-} \|f\|_{L^2(\Omega)} \|u-u_h\|_{L^2(\Omega)}.
\end{equation*}
The proof  is complete after combining the above inequalities for the terms in \eqref{u-uhL2}.
\end{proof}

\section{Extensions and final remarks}\label{extensionsrelated}

\subsection{Extension to three dimensions}

We now show that the space $S^1(T)$, introduced in (\ref{localFEspace}),
can easily be  extended in three-dimensions. We consider the problem \eqref{Problem} where $\Omega$ is a three-dimensional domain  and $\Gamma$ is a {simple, closed,} $C^2$ surface. Let now $\mathcal{T}_h$  be a simplicial triangulation of $\Omega$. Let $\mathcal{E}^h$ be all the faces of the mesh $\mathcal{T}_h$. Finally, let $\EhG$ be the set of all faces that belong to a tetrahedron that intersects $\Gamma$.

We now define the local finite element space. Let $T \in \mathcal{T}_h$ be a tetrahedron.  Let $x_0$ be a fixed point on $\Gamma \cap T$.   Let $\n{n}_0^+$ be the outward pointing unit vector normal to $\Omega^+$ at $x_0$.  Let $\n{t}_0^+, \n{s}_0^+$ be such that $\{\n{n}_0^+, \n{t}_0^+, \n{s}_0^+\}$ forms an orthonormal system.

Given  $v \in \mathbb{P}^1(T^+)$ there exists a unique  $\C(v)
\in \mathbb{P}^1(T^-)$ satisfying
\begin{alignat*}{1}
\C(v)(x_0)&\,\,=\,\ v(x_0), \label{eqE} \\
  \left( D_{\n{t}_0^+} \C(v) \right)(x_0) &\,\,=\,\,
\left( D_{\n{t}_0^+} v \right)(x_0),   \\
\left( D_{\n{s}_0^+} \C(v) \right)(x_0) &\,\,=\,\,
\left( D_{\n{s}_0^+} v \right)(x_0),   \\
\rho^-\left( D_{\n{n}_0^+} \C(v) \right)(x_0) &\,\,=\,\,
\rho^+\left( D_{\n{n}_0^+} v \right)(x_0).
\end{alignat*}

Given $T\in \mathcal{T}_h^\Gamma$ and for each $v
\in \mathbb{P}^1(T^+)$ we can consider the unique corresponding function
\[
G(v)=
\begin{cases}
v & \text{ in } T^{+}, \\
\C(v) & \text{ in } T^{-}.
\end{cases} \]

Let $\text{ span } \{v_1, v_2, v_3, v_4\}$ be a basis for $\mathbb{P}^1(T)$
restricted to $T^+$.  Then we define the local finite element space
\begin{equation*}
S^1(T)\,\,= \,\,
\begin{cases}
\text{span } \big\{G(v_1), G(v_2), G(v_3), G(v_4) \big\}&, \mbox{ if } T \in \mathcal{T}_h^\Gamma \\
\qquad\qquad\qquad\mathbb{P}^1(T) &, \mbox{ if } T \in \mathcal{T}_h \backslash \mathcal{T}_h^\Gamma.
\end{cases}
\end{equation*}

Consequently, the global finite element space is given by
\begin{equation*}
V_h\,:=\, \left\{ v\,:\, v|_T \in S^1(T),\, \forall T \in \mathcal{T}_h,\, v \text{ is continuous across all faces in } \EhGn \right\}.
\end{equation*}

\revblue{We believe that \eqref{fem} can be extend and analyzed to the three-dimensional case,
where now $e^\pm = e \cap \Omega^\pm$ are  pieces of a face $e$ of a tetrahedra, and considering appropriate stabilization parameters. This could be a subject for a future work.}

\subsection{Alternative Local Spaces}

An alternative approach to enforce weak continuity of our local space $S^1(T)$ is normally used in the literature (see \cite{MR2684351}). Instead of enforcing continuity of function and tangential derivative at $x_0$ one imposes continuity at two distinct points   $x_1, x_2$. More precisely, define $x_1, x_2$ be the two points in $\Gamma$ that intersect $\partial T$. Then, one can define  $\C(v)$ (in contrast to the definition in Lemma \ref{Lemma1}) by
\begin{alignat}{1}
\C(v)(x_i)&\,\,=\,\,v(x_i)\qquad \mbox{for }\, i\,=\,1,2 \nonumber \\
\rho^- ( D_{\n{n}^+_0} \C(v))(x_0) &\,\,=\,\,  \rho^+ ( D_{\n{n}^+_0} v)(x_0). \nonumber
\end{alignat}
Subsequently, $I_Tu$ is now defined by
\begin{eqnarray*}
(I^-_T u)(x_i) \,\,:=&  (J_T u^+_E)(x_i) &=:\,\,  (I^+_T u) (x_i) \qquad \mbox{for }\, i\,=\,1,2 \\
\rho^- (D_{\n{n}^+_0} I^-_T u)(x_0) \,\,:=& \rho^+ (D_{\n{n}^+_0} J_T u^+_E)(x_0) &=:\,\,
\rho^+ (D_{\n{n}^+_0} I^+_T u)(x_0).
\end{eqnarray*}
Defining the finite element method using these local spaces, we can prove all the a-priori estimates above.


Another alternative is to enforce the matching conditions by averaging on $T_{\Gamma}$.
More precisely,  one can redefine $\C(v)$ in Lemma \ref{Lemma1} by
\begin{alignat}{1}
\int_{\TG} \C(v) \,ds &\,\,:=\,\ \int_{\TG} v, \nonumber  \\
\int_{\TG} \C(v)\,s\, \,ds &\,\,:=\,\ \int_{\TG} v\,s, \nonumber \\
\int_{\TG} \rho^+D_{\n{n}^+} \C(v)  &\,\,:=\,\,
\int_{\TG} \rho^-D_{\n{n}^+}  v, \nonumber
\end{alignat}
or alternatively, one can replace the second equation by
\[
\int_{\TG} D_{\n{t}^+} \C(v)\, ds \,\,:=\,\,
\int_{\TG} D_{\n{t}^+}  \,v ds.
\]
Even though their numerical implementation is more complicated
and require numerical integrations, these alternatives have the
advantage that the analysis is shorter especially for the consistency
error, and also can be formulated using Lagrange multipliers.

\subsection{Extension to {C}artesian grids}  We note
that the analysis and methods developed here
can be easily extended to Cartesian grids and $\mathbb{Q}^1$ elements.
One possibility would be to use same local spaces $S^1(T)$ defined
before for quadrangular elements $T  \in \mathcal{T}_h^\Gamma$, that is,
one has three degrees of freedom for $T  \in \mathcal{T}_h^\Gamma$ and
four  degrees otherwise. In case one wants to have four degrees of
freedom for $T  \in \mathcal{T}_h^\Gamma$, let the space $S^1(T)$ then be
a piecewise bilinear function under the coordinates $\n{n}_0, \n{t}_0$
and impose
\begin{alignat}{1}
\C(v)(x_0) &\,\,:=\,\, v(x_0) \nonumber \\
  ( D_{\n{t}^+_0} \C(v) )(x_0) &\,\,:=\,\,  ( D_{\n{t}^+_0} v )(x_0) \nonumber  \\
\rho^-( D_{\n{n}^+_0} \C(v) )(x_0) &\,\,:=\,\,
\rho^+( D_{\n{n}^+_0} v )(x_0) \nonumber\\
\rho^-( D_{\n{n}^+_0} D_{\n{t}^+_0} \C(v) )(x_0) &\,\,:=\,\,
\rho^+( D_{\n{n}^+_0} D_{\n{t}^+_0} v )(x_0) \nonumber
\end{alignat}

\subsection{Alternative Bilinear forms}
In this section we discuss alternative bilinear forms. We will point out what are the {theoretical} difficulties in analyzing these alternative methods. When we provide numerical experiments in the following section we will also see that, although we cannot prove stability for  some methods, they sometimes do well experimentally {in some of the norms}.

Firstly, formulation without flux stabilization has been used for example in \cite{MR3338673}. Methods \eqref{a_h2} and \eqref{a_h4} experiment this direction.
\begin{alignat}{1}
a_h(w, v)\,\,:=\,\,& \int_{\Omega} \rho \nabla_h w \cdot \nabla_h v - \sum_{e \in \EhG}  \int_e  \left(\big\{\rho \nabla_h v \big\}
\cdot \llbracket w \rrbracket+   \big\{\rho \nabla_h  w \big\} \cdot \llbracket v \rrbracket\right) \label{a_h2}\\
& +  \gamma\,\sum_{e \in \EhG} \frac{1}{|e|} \left( \int_{e^-} \rho^- \llbracket w \rrbracket \cdot \llbracket v \rrbracket+  \int_{e^+} \rho^+ \llbracket w \rrbracket\cdot \llbracket v \rrbracket \right). \nonumber
\end{alignat}

\begin{alignat}{1}
a_h(w, v)\,\,:=\,\,& \int_{\Omega} \rho \nabla_h w \cdot \nabla_h v - \sum_{e \in \EhG}  \int_e  \left(\big\{\rho \nabla_h v \big\}
\cdot \llbracket w \rrbracket+   \big\{\rho \nabla_h  w \big\} \cdot \llbracket v \rrbracket\right) \label{a_h4}\\
& +  \gamma\,\sum_{e \in \EhG} \left( \frac{1}{|e^-|}  \int_{e^-} \rho^- \llbracket w \rrbracket \cdot \llbracket v \rrbracket+ \frac{1}{|e^+|}   \int_{e^+} \rho^+ \llbracket w \rrbracket\cdot \llbracket v \rrbracket \right). \nonumber
\end{alignat}
The problem with these methods is that we cannot { establish neither
the optimal inconsistency error nor the coercivity independent of the contrast
and the mesh.}

{ We note that methods \eqref{a_h2} and \eqref{a_h4} seem to do well
numerically in the $L^2$ and energy norms, however, they are
mesh dependent in the $L^\infty$ norm and they do not converge
in the weighted $W^{1,\infty}$ norm.}

The method that seems to do the best numerically, slightly better than the proposed method, is the following one:
\begin{alignat}{1}
a_h(w, v)\,\,:=\,\,& \int_{\Omega} \rho \nabla_h w \cdot \nabla_h v - \sum_{e \in \EhG}  \int_e  \left(\big\{\rho \nabla_h v \big\}
\cdot \llbracket w \rrbracket+   \big\{\rho \nabla_h  w \big\} \cdot \llbracket v \rrbracket\right) \label{a_h5}\\
& +  \gamma\,\sum_{e \in \EhG} \left(\frac{1}{|e^-|} \int_{e^-} \rho^- \llbracket w \rrbracket \cdot \llbracket v \rrbracket+ \frac{1}{|e^+|} \int_{e^+} \rho^+ \llbracket w \rrbracket\cdot \llbracket v \rrbracket \right) \nonumber   \\
&  \gamma_F\,\sum_{e \in \EhG}{|e|}  \left(\int_{e^-} \rho^-\llbracket \nabla_h  v\rrbracket \, \llbracket \nabla_h  w \rrbracket + \int_{e^+} \rho^+\llbracket \nabla_h  v\rrbracket\,\llbracket \nabla_h  w\rrbracket \right).\nonumber
\end{alignat}
The difference between this method and the one analyzed in this paper is that we are using a stronger flux stabilization (i.e. we replace $|e^\pm|$ by $|e|$ and we introduce a  flux stabilization parameter $\gamma_F$).  It is not difficult to see, that we can prove all the error estimates contained in this paper for this method. In particular, the coercivity of the bilinear form is obvious since we are adding even more stabilization. Clearly, now we would have to redefine our $V$ and $W$ norms but the approximation properties will still hold. In summary, Theorem \ref{maintheorem} and
Corollaries \ref{coro1} and \ref{coro2} hold for this formulation.

{ A very natural question would be if we can penalize the jump terms with $1/|e|$ instead of $1/|e^{\pm}|$ and get a stable and optimally convergent method independent of contrast. The answer is yes, as long as we penalize the jump of the full gradient instead of the flux.
\begin{alignat}{1}
a_h(w, v)\,\,:=\,\,& \int_{\Omega} \rho \nabla_h w \cdot \nabla_h v - \sum_{e \in \EhG}  \int_e  \left(\big\{\rho \nabla_h v \big\}
\cdot \llbracket w \rrbracket+   \big\{\rho \nabla_h  w \big\} \cdot \llbracket v \rrbracket\right) \label{a_h3}\\
& +  \gamma\,\sum_{e \in \EhG} \frac{1}{|e|}\left( \int_{e^-} \rho^- \llbracket w \rrbracket \cdot \llbracket v \rrbracket+  \int_{e^+} \rho^+ \llbracket w \rrbracket\cdot \llbracket v \rrbracket \right) \nonumber   \\
&  \gamma_F\,\sum_{e \in \EhG} {|e|}\left( \int_{e^-} \rho^- \left[\nabla    v\right]_{\otimes}    \cdot  \left[\nabla w\right]_{\otimes} + \int_{e^+} \rho^+  \left[\nabla    v\right]_{\otimes}   \cdot  \left[\nabla w\right]_{\otimes}       \right),\nonumber
\end{alignat}
where the jumps in the last line are defined by
\[
\left[\nabla    v\right]_{\otimes}  = \nabla v^{-} \otimes \n{n}^{-} + \nabla v^{+} \otimes \n{n}^{+}.
\]
and the symbol  $\otimes$ denotes the outer product between vectors.
}
{
We note that in the coercivity and inconsistency
error analysis proofs, we have used the inverse inequality
$ |e^-| \,\|D_{\n{n}} v\|_{L^2(e^-)} \leq C\, \|\nabla v\|_{L^2(T^-)}$ (where $T^-$ is a triangle with edge $e$, and similar result for $e^+$) in particular in the estimation of
 the left-hand side of (\ref{Dn1bound}). For method \eqref{a_h3} we want to avoid using this inverse estimate and the
factor $1/|e^-|$ (also $1/|e^+|$). Let $x$ be the vertex of $e$ belonging to $\Omega^-$.
Using the ideas in the proof of Lemma \ref{twotri},  we can find a sequence of elements
$\{T_2, T_3,\cdots, T_{N}\}$  in the patch of $x$,  and common edges
$\{e_2, e_3, \cdots, e_{N-1}\}$ with $\bar{e}_i  =
\overline{T}_i \cap \overline{T}_{i+1}$ and $e_i^- := e_i \cap \Omega^-$,
such that, $|e^-|\leq {\sigma} |e_2^-| \leq {\sigma}^2 |e_3^-| \leq {\sigma}^{N-2} |e_{N-1}^-| = {\sigma}^{N-1} |e|$ where ${\sigma}$ a positive constant bounded uniformly from below by zero  and $N$ is bounded depending on the shape regularity of the mesh.
Then, we bound the left-hand side of  (\ref{Dn1bound}) by using recursively
\begin{equation*}
\|\nabla v|_{T_{i-1}}\|^2_{L^2(e_i^-)} \leq 2 \left(
\|   \left[\nabla v  \right]_{\otimes} \|^2_{L^2(e_i^-)} +
\|\nabla v|_{T_{i}}\|^2_{L^2(e_i^-)} \right),
\end{equation*}
and
\begin{equation*}
\|\nabla v|_{T_i}\|^2_{L^2(e_i^-)} \leq
1/{\sigma} \|\nabla v|_{T_i}\|^2_{L^2(e_{i+1}^-)},
\end{equation*}
and then use that
\begin{equation*}
\|\nabla v|_{T_{N}}\|^2_{L^2(e_{N-1}^-)}
\leq( C/|e|)\, \|\nabla v\|^2_{L^2(T_N^-)}.
\end{equation*}
Hence,
\begin{equation*}
|e| \, \|\nabla v\|_{L^2(e^-)}^2 \le C \,  \sum_{i=2}^N  |e| \, \|  \left[\nabla v  \right]_{\otimes} \|^2_{L^2(e_i^-)}+ \frac{ C}{|e|} \, \|\nabla v\|^2_{L^2(T_N^-)}.
\end{equation*}
}

\subsection{Final remarks}\label{finalremarks}
In this article  rigorous error estimates independent of contrast were {derived}. However, many interesting research questions remain. Firstly, we kept track, as much as possible, of how the constants depend on the geometry (e.g. curvature). However, there are two constants, $\Ce$ and $\Cr$, that we did not investigate in detail how they scale with geometric quantities such as maximum curvature and the radius $r$ of the tubular neighborhood. We believe it is possible to prove a bound for $\Ce$ in terms of geometric quantities, but for the sake of simplicity we did not investigate it here.  In addition, the regularity constant $\Cr$ appearing in \ref{H2regularity} depends on the geometry.  Addressing these issues will lead to error estimates that are completely explicit on their dependence on the curvature and radius $r$ of the tubular neighborhood.

Secondly, our method and error estimates do not consider problems with high curvature. An interesting line of research would be to investigate problems with interfaces of arbitrary curvature. In this direction, perhaps a combination of the method presented here and the multicale method by Chu et al. in \cite{MR2684351} could be a possible approach.

Thirdly, rigorous error analysis for higher order Immersed Finite Element methods ($k>1$) remains open. It would be interesting to study this, in particular, for high-contrast problems.

Finally, we would like to study the sharpness of our Assumptions (1)-(3). For instance, we observe that our method is still well-defined without Assumption (3).
\section{Numerical examples}\label{Numericssection}

In this section we explore the properties of the methods presented in sections above applied to the two dimensional interface problem \eqref{Problem}. In particular, we are interested in the computation of the following errors and their respective \revblue{estimated order} of convergence
\begin{align*}
e_h^0\,\,& :=\,\, \|u -  u_h\|_{L^2(\Omega)},&  e_h^\infty\,\,& :=\,\, \|u - u_h\|_{L^\infty(\Omega)},   \\
e_h^1\,\,& :=\,\, \|\sqrt{\rho}\nabla_h( u -  u_h)\|_{L^2(\Omega)},&
e_h^{1,\infty}\,\,& :=\,\, \|\sqrt{\rho}\nabla_h( u -  u_h)\|_{L^\infty(\Omega)} ,\\
\bar{e}_h^1\,\,& :=\,\, \|\rho\nabla_h (u -  u_h)\|_{L^2(\Omega)},&
\bar{e}_h^{1,\infty}\,\,& :=\,\, \|\rho\nabla_h (u -  u_h)\|_{L^\infty(\Omega)} ,
\\
e_h^{\n{n},\infty}\,\,& :=\,\, \|\rho D_{\n{n}} (u- u_h)\|_{L^\infty(\Gamma)} ,& {\tilde{e}_h^{1,\infty}}\,\,& :=\,\, {\|\rho\nabla_h( u - u_h)\|_{L^\infty(\Omega\backslash(\cup\{T: T\in\mathcal{T}_h^\Gamma\}))}}
\end{align*}
\[ \revblue{\mathrm{e.o.c.}} \,\,:=\,\, \frac{\log(e_{h_{l+1}}/e_{h_l})}{ \log(h_{l+1}/h_{l})}.\]
{Computation (or approximation) of the $L^\infty$ norms are performed evaluating the error at the following set of points: if an element does not intersect the interface then the set of points are the nodes and the centroid of the element, if the element does intersect the interface we evaluate at the nodes and at the two points intersections of the interface with the edges of the element.}
We observe that the errors corresponding to the $L^2$ norm and $W^{1,2}$ weighted with $\sqrt{\rho}$ semi-norm correspond to the only results proved in this paper, Theorem \ref{coro2} and \ref{maintheorem}  respectively. We expect optimal convergence, second order in the $L^2$ norm and first order in the weighted $W^{1,2}$ semi-norm.
We also compute the analogous in the $L^\infty$ norm and $W^{1,\infty}$ weighted with $\sqrt{\rho}$ semi-norm. In addition, we compute the errors in the $W^{1,2}$ and $W^{1,\infty}$ semi-norm weighted with  $\rho$.  Note that the estimate for the interpolation error is also achieved in this semi-norm. Indeed, {this} is a consequence of Lemma \ref{localcurve} and Proposition \ref{prop1} and \ref{prop3}
\begin{align*}
\|\rho \nabla_h (u-I_h u)\|_{L^2(\Omega)}\,\,\leq&\,\, C \|f\|_{L^2(\Omega)}.
\end{align*}

A revealing result of our experiments is that the ratio of convergence of the error is optimal when we compute using the triangles non intersecting the interface.  Error $\tilde{e}_h^{1,\infty}$ illustrates this observation. Finally, error $e_h^{\n{n},\infty}$ is a standard error for interface problems, illustrating the approximation of the normal derivative on the interface.

The experiment presented below shows that method  \eqref{fem}-\eqref{a_h5} produces the best results. This method (and all the others) does not seem optimal for the $W^{1,\infty}$ error weighted with $\rho$. However, it is optimal when we do not consider the elements in $\mathcal{T}_h^\Gamma$. We highlight that this result was not remotely addressed in this paper, and this kind of estimates appear to be more difficult. For the method that we analyzed  \eqref{fem}-\eqref{a_h} we observe an uncertain behavior in the error $e_h^{1,\infty}$ for the last mesh, not achieving the optimal convergence as in the previous method.
We also present tables for one of the method without flux stabilization, method \eqref{fem}-\eqref{a_h4}. We observe non convergence so far for the error $e_h^{1,\infty}$ and in the last mesh and a deterioration in the $L^\infty$ norm. We also do not have the optimal convergence for the normal flux error $e_h^{\n{n},\infty}$.

In our numerical experiment we consider the two dimensional domain $\Omega = (-1,1)^2$ with the  immersed interface $\Gamma = \{ \n{x} \in\Omega : \n{x}_1^2+\n{x}_2^2 = R^2\}$.  We define  $\Omega^- :=\{ \n{x} \in\Omega : \n{x}_1^2+\n{x}_2^2 < R^2\}$ and $\Omega^+ = \Omega\backslash(\Omega^-\cup \Gamma)$. Example \ref{Ex1} considers the following exact solution
\begin{equation*}
    u(\n{x})\,\,=\,\, \left\{
                        \begin{array}{ll}
                          \frac{\revblue{R}^\alpha}{\rho^-} &, \hbox{ if } \n{x}\in \Omega^-, \\
                          \frac{\revblue{R}^\alpha}{\rho^+} +\frac{1}{3^\alpha}(\frac{1}{\rho^-} - \frac{1}{\rho^+}) &,  \hbox{ if } \n{x}\in \Omega^+, \
                        \end{array}
                      \right.
    \end{equation*}
where $\revblue{R} = \sqrt{\n{x}_1^2+\n{x}_2^2}$ and $\alpha = 2$. For example \ref{Ex1} we have $\Gamma = \partial \Omega^-$. Similar results were obtained for the case  $\Gamma = \partial \Omega^+$. We provide plots of the approximate solution for both cases.

Finite element uniform triangular meshes $\mathcal{T}_h$ non matching the interface were used. In the tables we compute with $h = 2^{-(l+3/2)}$, for $l = 1,\,...,\,7$.

All the computations were performed in MATLAB, including solution of the linear system by means of ``$\backslash$''.

\begin{enumerate}[label=\bfseries (\arabic*)]
\item\label{Ex1} Case $\Gamma = \partial \Omega^-$, $\rho^+ = 10^4$ and $\rho^-=1$.  Tables \ref{Table:Ex1_1} and \ref{Table:Ex1_2} show the results obtained by method \eqref{fem}-\eqref{a_h5} with stabilization parameters $\gamma = 10$ and $\gamma_F=10$. Note that this method has a stronger flux stabilization that the method analyzed in the paper.

\begin{table}[!htp]
\small
\ra{1.1}
\begin{center}
\begin{tabular}{@{}l@{\hskip .4in}c@{\hskip .2in}c@{\hskip .4in}c@{\hskip .2in}c@{\hskip .4in}c@{\hskip .2in}c@{\hskip .4in}c@{\hskip .2in}c@{}}\toprule
$l$ & $e_h^0$ & \revblue{e.o.c.} & $e_h^\infty$ & \revblue{e.o.c.}  & $e_h^1$ & \revblue{e.o.c.} & $e_h^{1,\infty}$ & \revblue{e.o.c.} \\ \midrule
1  &   8.2e-3   &             &    2.5e-2    &              &      1.1e-1    &                &      3.7e-1     &\\
2  &   1.7e-3   &   2.28   &    5.7e-3    &   2.12    &      4.4e-2    &     1.30    &      2.1e-1     &      0.86\\
3  &   2.7e-4   &   2.63   &    1.3e-3    &   2.19    &      1.8e-2    &     1.29    &      9.7e-2     &      1.08\\
4  &   4.6e-5   &   2.57   &    3.2e-4    &   1.97    &      8.3e-3    &     1.12    &      5.2e-2     &      0.90\\
5  &   9.0e-6   &   2.34   &    7.2e-5    &   2.15    &      3.9e-3    &     1.07    &      2.5e-2     &      1.08\\
6  &   2.0e-6   &   2.19   &    1.8e-5    &   2.01    &      1.9e-3    &     1.03    &      1.3e-2     &      0.92\\
7  &   4.7e-7   &   2.08   &    4.6e-6    &   1.94    &      9.5e-4    &     1.02    &      6.8e-3     &      0.94\\
\bottomrule
\end{tabular}\vskip3mm
\begin{tabular}{@{}l@{\hskip .4in}c@{\hskip .2in}c@{\hskip .4in}c@{\hskip .2in}c@{\hskip .4in}c@{\hskip .2in}c@{\hskip .4in}c@{\hskip .2in}c@{}}\toprule
$l$ & $\bar{e}_h^1$ & \revblue{e.o.c.} & $\bar{e}_h^{1,\infty}$ & \revblue{e.o.c.} & $\tilde{e}_h^{1,\infty}$ & \revblue{e.o.c.} & $e_h^{\n{n},\infty}$ & \revblue{e.o.c.}    \\ \midrule
1  &   3.9e-1    &               &     7.0e-1     &                &  7.0e-1     &               & 3.7e-1    &            \\
2  &   1.6e-1    &    1.32    &     6.1e-1     &     0.19    &  4.0e-1     &     0.81   & 2.1e-1    &     0.86\\
3  &   6.4e-2    &    1.29    &     1.9e-1     &     1.68    &  1.9e-1     &     1.07   & 9.7e-2    &     1.08\\
4  &   2.9e-2    &    1.15    &     2.2e-1     &     -0.20   &  1.0e-1     &     0.91   & 4.9e-2    &     1.00\\
5  &   1.4e-2    &    1.09    &     1.4e-1     &     0.65    &   5.0e-2    &     1.01   & 2.5e-2    &     0.98\\
6  &   6.6e-3    &    1.04    &     6.6e-2     &     1.09    &   2.5e-2    &     0.99   & 1.2e-2    &     1.00\\
7  &   3.2e-3    &    1.02    &     5.1e-1     &     -2.95   &  1.3e-2     &     0.98   & 6.2e-3    &     1.00\\
\bottomrule
\end{tabular}
\vskip3mm
\end{center}
\caption{ Example \ref{Ex1}; errors and convergence orders with $\rho^- = 1$ and $\rho^+ = 10^4$, using method \eqref{fem}-\eqref{a_h5} with stabilization parameters $\gamma = 10$ and $\gamma_F = 10$.}
\label{Table:Ex1_1}
\end{table}

\begin{table}[!htp]
\small
\ra{1.1}
\begin{center}
\begin{tabular}{@{}l@{\hskip .5in}c@{\hskip .2in}c@{\hskip .3in}c@{}}\toprule
$\rho^+$ & $e_h^{0}$  & $\bar{e}_h^{1,\infty}$& $e_h^{1}$     \\ \midrule
$10^1$    & 2.3e-6  & 6.5e-3 & 2.8e-3\\
$10^2$    & 2.0e-6  & 6.6e-3 & 2.0e-3\\
$10^3$    & 2.0e-6  & 6.6e-3 & 1.9e-3\\
$10^4$    & 2.0e-6  & 6.6e-3 & 1.9e-3\\
$10^5$    & 2.0e-6  & 6.6e-3 & 1.9e-3\\
$10^6$    & 2.0e-6  & 6.6e-3 & 1.9e-3\\
\bottomrule
\end{tabular}\vskip3mm
\end{center}
\caption{ Example \ref{Ex1}; errors with $\rho^- = 1$ and $h= 2^{-(6+3/2)}$, using method \eqref{fem}-\eqref{a_h5} with stabilization parameters $\gamma = 10$ and $\gamma_F = 10$.}
\label{Table:Ex1_2}
\end{table}

The results in Table \ref{Table:Ex1_1} show optimal convergence for the errors $e_h^1$ and $e_h^0$ validating the theoretical results Theorem \ref{maintheorem} and \ref{coro2}. In addition we observe optimal convergence for the error $e_h^\infty$ and $e_h^{1,\infty}$.
The error $\bar{e}_h^1$,  weighted with $\rho$ instead of $\sqrt{\rho}$,  converges optimally. However, the rate of convergence of the error {$\bar{e}_h^{1,\infty}$} is not optimal. An interesting observation is that the error {$\tilde{e}_h^{1,\infty}$} converges optimally, indicating that is only a couple of elements where the error does not converge.
Another appealing feature of the method is the optimal order of convergence for the error $e_h^{\n{n},\infty}$.

Table \ref{Table:Ex1_2} shows that the errors are independent of the contrast. We can observe that although we increase $\rho^+$ the errors remain constant, showing the contrast independency of our estimates.

We present in Table \ref{Table:Ex1_3} the errors and convergence orders for the method analyzed in the paper \eqref{fem}-\eqref{a_h}.

\begin{table}[!htp]
\small
\ra{1.1}
\begin{center}
\begin{tabular}{@{}l@{\hskip .4in}c@{\hskip .2in}c@{\hskip .4in}c@{\hskip .2in}c@{\hskip .4in}c@{\hskip .2in}c@{\hskip .4in}c@{\hskip .2in}c@{}}\toprule
$l$ & $e_h^0$ & \revblue{e.o.c.} & $e_h^\infty$ & \revblue{e.o.c.}  & $e_h^1$ & \revblue{e.o.c.} & $e_h^{1,\infty}$ & \revblue{e.o.c.} \\ \midrule
1  &   8.6e-3   &          &    2.6e-2    &           &    1.1e-1    &              &     3.7e-1    &            \\
2  &   1.7e-3   &   2.31   &    6.0e-3    &   2.14    &    4.5e-2    &     1.30     &     2.0e-1    &       0.90\\
3  &   2.8e-4   &   2.63   &    1.3e-3    &   2.18    &    1.8e-2    &     1.31     &     9.3e-2    &       1.09\\
4  &   4.7e-5   &   2.57   &    3.4e-4    &   1.97    &    8.4e-3    &     1.12     &     5.5e-2    &       0.76\\
5  &   9.2e-6   &   2.35   &    7.6e-5    &   2.16    &    4.0e-3    &     1.08     &     2.6e-2    &       1.09\\
6  &   2.0e-6   &   2.20   &    1.9e-5    &   2.01    &    1.9e-3    &     1.03     &     1.4e-2    &       0.91\\
7  &   4.7e-7   &   2.09   &    5.1e-6    &   1.88    &    9.5e-4    &     1.02     &     7.1e-2    &       -2.37\\
\bottomrule
\end{tabular}\vskip3mm
\begin{tabular}{@{}l @{\hskip .4in}c@{\hskip .2in}c@{\hskip .4in}c@{\hskip .2in}c@{\hskip .4in}c@{\hskip .2in}c@{\hskip .4in}c@{\hskip .2in}c@{}}\toprule
$l$ &  $\bar{e}_h^1$ & \revblue{e.o.c.} & $\bar{e}_h^{1,\infty}$ & \revblue{e.o.c.} & $\tilde{e}_h^{1,\infty}$ & \revblue{e.o.c.}& $e_h^{\n{n},\infty}$ & \revblue{e.o.c.}    \\ \midrule
1     &     4.0e-1   &            &      7.4e-1    &                 & 7.4e-1    &            & 3.7e-1    &        \\
2     &     1.6e-1   &     1.33   &      2.5e+0    &      -1.79      & 4.0e-1    &      0.90  & 2.0e-1    &     0.90\\
3     &     6.4e-2   &     1.31   &      2.8e-1    &      3.20       & 1.9e-1    &      1.07  & 9.2e-2    &     1.11\\
4     &     2.9e-2   &     1.15   &      1.6e+0    &      -2.56      & 9.7e-2    &      0.96  & 4.5e-2    &     1.01\\
5     &     1.4e-2   &     1.09   &      4.8e-1    &      1.76       & 4.7e-2    &      1.04  & 2.3e-2    &     0.96\\
6     &     6.6e-3   &     1.05   &      4.8e-1    &      0.01       & 2.4e-2    &      1.00  & 1.1e-2    &     1.05\\
7     &     3.2e-3   &     1.02   &      7.1e+0    &      -3.88      & 1.8e-2    &      0.42  & 5.9e-3    &     0.93\\
\bottomrule
\end{tabular}\vskip3mm
\end{center}
\caption{ Example \ref{Ex1}; errors and convergence orders with $\rho^- = 1$ and $\rho^+ = 10^4$, using method \eqref{fem}-\eqref{a_h} with stabilization parameters $\gamma = 10$.}
\label{Table:Ex1_3}
\end{table}

The results in Table \ref{Table:Ex1_3} show optimal convergence for the errors $e_h^1$ and $e_h^0$ validating the theoretical results Theorem \ref{maintheorem} and \ref{coro2}. In addition we observe optimal convergence for the error $e_h^\infty$. The error $e_h^{1,\infty}$ seems to converge optimal up to mesh $l=6$, and then for the last mesh the rate can possibly be affected by the choice of the flux stabilization parameter. As in the previous test the error $\bar{e}_h^1$ converges optimally, however for the rest of the errors the convergence is not as clear as in the previous method. We suspect that this  phenomena is related to the weights $|e^\pm|$ in the flux stabilization.

Finally, Table \ref{Table:Ex1_4} displays error and convergence orders for the method without flux stabilization \eqref{fem}-\eqref{a_h4}.
\begin{table}[!htp]
\small
\ra{1.1}
\begin{center}
\begin{tabular}{@{}l@{\hskip .4in}c@{\hskip .2in}c@{\hskip .4in}c@{\hskip .2in}c@{\hskip .4in}c@{\hskip .2in}c@{\hskip .4in}c@{\hskip .2in}c@{}}\toprule
$l$ & $e_h^0$ & \revblue{e.o.c.} & $e_h^\infty$ & \revblue{e.o.c.}  & $e_h^1$ & \revblue{e.o.c.} & $e_h^{1,\infty}$ & \revblue{e.o.c.} \\ \midrule
1  &   1.7e-3   &             &    7.0e-3    &              &      5.8e-2    &                &      2.2e-1     &\\
2  &   4.5e-3   &   1.88   &    2.9e-3    &   1.29    &      3.1e-2    &     0.89    &      1.6e-1     &      0.51\\
3  &   3.6e-4   &   0.31   &    6.1e-3    &   -1.08   &      1.2e-1    &     -1.89    &      1.3e+1     &      -6.37\\
4  &   2.8e-5   &   3.69   &    2.1e-4    &   4.87    &      7.7e-3    &     3.92    &      2.0e-1     &      5.98\\
5  &   7.3e-6   &   1.96   &    5.0e-5    &   2.05    &      3.8e-3    &     1.02    &      1.5e-1     &      0.40\\
6  &   1.7e-6   &   2.06   &    1.2e-5    &   2.05    &      1.9e-3    &     1.01    &      2.5e-2     &      -0.70\\
7  &   4.5e-7   &   1.95   &    4.0e-6    &   1.60    &      9.5e-4    &     0.99    &      1.1e-1     &      0.23 \\
\bottomrule
\end{tabular}\vskip3mm
\begin{tabular}{@{}l @{\hskip .4in}c@{\hskip .2in}c@{\hskip .4in}c@{\hskip .2in}c@{\hskip .4in}c@{\hskip .2in}c@{\hskip .4in}c@{\hskip .2in}c@{}}\toprule
$l$& $\bar{e}_h^1$ & \revblue{e.o.c.} & $\bar{e}_h^{1,\infty}$ & \revblue{e.o.c.} & $\tilde{e}_h^{1,\infty}$ & \revblue{e.o.c.}& $e_h^{\n{n},\infty}$ & \revblue{e.o.c.}    \\ \midrule
1  &  2.1e-1    &           &     5.8e-1     &             &  2.2e-1    &            & 1.5e-1    &         \\
2  &  1.3e-1    &  0.69     &     1.4e+1     &     -4.58   &  1.5e-1    &      0.49  & 1.1e-1    &     0.39\\
3  &  7.3e-1    & -2.46     &     1.3e+2     &     0.10    &  8.9e-1    &      -2.5  & 1.3e+1    &     -6.83\\
4  &  2.8e-2    &  4.70     &     7.2e+0     &     0.84    &  5.3e-2    &      4.07  & 2.0e-1    &     6.00\\
5  &  1.3e-2    &  1.07     &     2.6e+0     &     1.50    &  2.9e-2    &      0.86  & 1.5e-1    &     0.39\\
6  &  7.0e-3    &  0.93     &     2.1e+0     &     0.27    &  3.7e-2    &      -0.35 & 2.5e-1    &     -0.71\\
7  &  3.3e-3    &  0.99     &     8.6e+0     &     -2.10   &  2.1e-2    &      0.86  & 1.1e-3    &      0.23\\
\bottomrule
\end{tabular}\vskip3mm
\end{center}
\caption{ Example \ref{Ex1}; errors and convergence orders with $\rho^- = 1$ and $\rho^+ = 10^4$, using method \eqref{fem}-\eqref{a_h4} with stabilization parameters $\gamma = 10$.}
\label{Table:Ex1_4}
\end{table}

The results in Table \ref{Table:Ex1_4} show optimal asymptotical convergence for the errors $e_h^1$ and $e_h^0$. In addition we observe a slightly sub-optimal convergence for the error $e_h^\infty$, approximately $1.8$. The error $e_h^{1,\infty}$ does not seem to converge.
As in the previous test the error $\bar{e}_h^1$ converges asymptotically to 1, however for the rest of the errors we do not observe convergence.

\begin{figure}
  \centering
  \includegraphics[scale=.5]{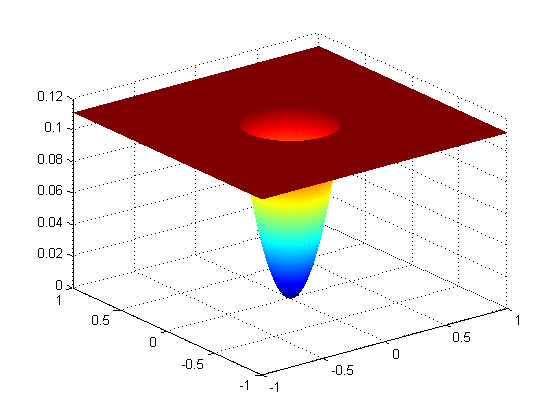}\includegraphics[scale=.5]{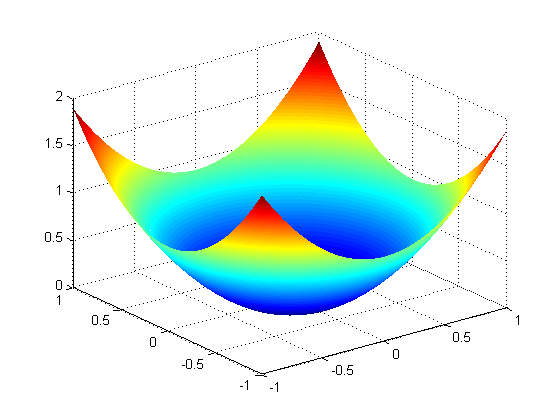}\\
  \caption{Approximate solution Example \ref{Ex1} (left) case  $\Gamma = \partial \Omega^-$ and for  case $\Gamma = \partial \Omega^+$ (right).}\label{Appsol}
\end{figure}
\end{enumerate}

\clearpage
\counterwithin{lemma}{section}
\appendix
\section{Technical lemmas}\label{technicallemmasection}
{
In this section we prove two technical lemmas involving geometrical estimates on elements intersected by the interface $\Gamma$. We remind the reader that we assume that $\Gamma$ is a simple $\mathcal{C}^2$ curve with an arc-length parameterization $\n{X}:[0,|\Gamma|)\rightarrow \Gamma$, and assumption (1)-(3) in Section \ref{SpaceVh}.}

{We assume that $r$ is the radius of our tubular neighborhood given in Lemma \ref{tubularlemma}. Equivalently,  for any  $0< {\tau}\le r$ we have
\begin{equation*}
Tub({\tau})=\{x: \text{dist}(x, \Gamma) \le {\tau}\}.
\end{equation*}
Moreover, for any $x \in Tub(r)$ there exists a unique $x_\Gamma \in \Gamma$ such that $\text{dist}(x,x_\Gamma)=\text{dist}(x, \Gamma)$ and $x-x_\Gamma$ is perpendicular to $\Gamma$ at $x_\Gamma$.}

{
\begin{lemma}\label{TGammalessLT}
Consider the $\Gamma_{s}$ a segment of the curve $\Gamma$ and $\ell$ the straight segment connecting the two end points of $\Gamma_{s}$. Assume that $|\ell| \le r$, where $r$ is the radius of the $r$-tubular neighborhood. Then, it holds
\begin{equation*}
|\Gamma_{s}| \leq 2 |\ell|.
\end{equation*}
\end{lemma}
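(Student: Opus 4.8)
The plan is to realize $\Gamma_{s}$ as the image of its chord $\ell$ under the closest-point projection onto $\Gamma$, and to exploit that this projection contracts lengths by at most a factor $2$ once the chord lies inside the half-tube $Tub(r/2)$.

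First I would note that $\ell$ sits well inside the tube. Write $P,Q$ for the two endpoints of $\Gamma_{s}$, so $P,Q\in\Gamma$ and $|\ell|=|P-Q|$. Every point of $\ell$ lies within distance $|\ell|/2$ of $P$ or of $Q$, hence within distance $|\ell|/2\le r/2$ of $\Gamma$; thus $\ell\subseteq\overline{Tub(r/2)}$. By the facts recalled just above, the closest-point projection $\pi$ is single-valued on $Tub(r)$ and $z-\pi(z)$ is normal to $\Gamma$ at $\pi(z)$. Moreover $\pi$ is Lipschitz on $\overline{Tub(\rho)}$ for $\rho<r$ with constant $r/(r-\rho)$ — the standard Lipschitz estimate for the metric projection onto a curve possessing an $r$-tubular neighborhood (equivalently, of reach at least $r$) — and with $\rho=|\ell|/2\le r/2$ this constant is at most $2$.

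Next I would use that $\pi$ fixes $P$ and $Q$, since they already lie on $\Gamma$. Hence $\pi(\ell)$ is a connected subset of $\Gamma$ containing both $P$ and $Q$, i.e.\ a sub-arc of $\Gamma$ joining $P$ to $Q$; since $\Gamma_{s}$ is the sub-arc of $\Gamma$ cut off by $\ell$, this forces $\pi(\ell)\supseteq\Gamma_{s}$. As a $2$-Lipschitz map sends the segment $\ell$ of length $|\ell|$ to a path of length at most $2|\ell|$, we conclude $|\Gamma_{s}|\le|\pi(\ell)|\le 2|\ell|$, which is the claim.

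The main obstacle is the step $\pi(\ell)\supseteq\Gamma_{s}$: a priori $\pi(\ell)$ could be the complementary sub-arc $\Gamma\setminus\Gamma_{s}$, and it is precisely here that the tubular-neighborhood hypothesis (rather than mere smoothness) is used. I would dispatch it by observing that $|\pi(\ell)|\le 2|\ell|\le 2r$, whereas the complementary arc is too long to be covered once $|\Gamma|>4r$, which holds in every application of the lemma (there $\Gamma$ is fixed while $\Gamma_{s}$ is a short sub-arc of length $O(h)$); alternatively one checks directly that for each interior point $x\in\Gamma_{s}$ the normal to $\Gamma$ at $x$ meets $\ell$ inside $Tub(r)$, so that $x\in\pi(\ell)$. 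A self-contained variant avoiding the projection is to parametrize $\Gamma_{s}$ by arc length $\n{X}$, use $|\n{X}'(t)-\n{X}'(0)|\le\kappa t$ together with $\kappa\le 1/r$ to get $(\n{X}(s)-\n{X}(0))\cdot\n{X}'(0)\ge\kappa^{-1}\sin(\kappa s)$ for $\kappa s\le\pi$, and deduce $|\Gamma_{s}|\le 2|\ell|$ once one knows $|\Gamma_{s}|$ is not too large; but that a priori length bound again requires the tubular neighborhood, so the projection argument is the more economical one and, pleasingly, it produces the constant $2$ exactly.
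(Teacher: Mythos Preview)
Your approach is genuinely different from the paper's. The paper argues by area: it takes the tubular section $S=Tub(r/2)\cap\tilde{S}$, where $\tilde{S}$ is the strip between the normals $N_{1},N_{2}$ to $\Gamma$ at the two endpoints, uses $|S|=r\,|\Gamma_{s}|$, and then shows $S$ lies inside the trapezoid $R$ bounded by $N_{1},N_{2}$ and the two lines parallel to $\ell$ at distance $r$, which has area $|R|=2r|\ell|$. Your route through the nearest-point projection $\pi$ and its Lipschitz constant $r/(r-\rho)\le2$ on $\overline{Tub(r/2)}$ is more conceptual, gets the constant $2$ with almost no geometric casework, and sharpens automatically to $r/(r-|\ell|/2)$ when $|\ell|<r$. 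The paper's argument buys self-containment: it needs no external fact about the projection, only elementary plane geometry.

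On the step $\pi(\ell)\supseteq\Gamma_{s}$ that you rightly flag: your patch (a) does not close it. It shows only that \emph{some} arc between the endpoints has length $\le2|\ell|$; nothing in the hypotheses singles out $\Gamma_{s}$ as that arc, and the stated inequality is plainly false if one takes $\Gamma_{s}$ to be the long arc of a circle with chord $|\ell|\le r$. Patch (b) is asserted rather than argued---the normal at an interior $x\in\Gamma_{s}$ need not meet the \emph{segment} $\ell$. To be fair, the paper's proof carries the same tacit assumption at the line $|S|=r|\Gamma_{s}|$, which holds only if $\Gamma_{s}$ is precisely the arc lying in $\tilde{S}$. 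In every application in the paper one has $\Gamma_{s}=T\cap\Gamma$ inside a triangle of diameter $h<r/2$, so the point is moot in context; to make the lemma standalone, add a hypothesis pinning down $\Gamma_{s}$ as the local arc (for instance, that $\Gamma_{s}$ lies in the closed disk of radius $r$ about the midpoint of $\ell$ while $\Gamma\setminus\Gamma_{s}$ does not), after which $\pi(\ell)$---which sits in the $r/2$-neighborhood of $\ell$ and hence in that disk---is forced to cover $\Gamma_{s}$.
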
}

\begin{proof}
{
Let $x_1, x_2$ be the endpoints of the line segment $\ell$. For $i=1,2$, let $N_{i}$ be the line  that is normal to $\Gamma$  at $x_i$. Let $\tilde{S}$ be the infinite region enclosed by $N_{1}$ and $N_{2}$, then let $S= Tub(r/2) \cap \tilde{S}$ be the tubular section of $\Gamma_s$.  We see that the area of $S$ is given by $|S|= r|\Gamma_{s}|$.  Let $M_{i}$ $i=1,2$ be the two lines that are parallel to $\ell$ and distance $r$ from  $\ell$. Consider the trapezoid, $R$, enclosed by $N_{i}, M_{i}, i=1,2$.  We note that the area of $R$ is given by $|R|=2 r |\ell|$. The result will follow if we show that $S \subset R$ since this will imply that $r|\Gamma_{s}| \le 2r|\ell|$.}

{
To this end, we first note that the line segment $\ell \subset  Tub(r/2)$ since the distance of any point in $\ell$ to $\Gamma$ is less than $|\ell/2|$ which we are assuming is less than $r/2$.
Let $x \in S$, then we know there exists a unique point $x_\Gamma \in \Gamma_s$ where the distance from $x$ to $x_\Gamma$ is less than $r/2$ and the line, which we denote by $L_x$, that passes through $x$ and $x_\Gamma$ is perpendicular to $\Gamma$ at $x_\Gamma$. We know that $L_x$ intersects $\ell$ and we call this point $y$.  We also know that distance between $y$ and $x_\Gamma$ is less than $r/2$ since $y \in \ell \subset Tub(r/2)$. Hence, we have shown that $\text{dist}(y,x) \le r$. This, of course implies that the distance of $x$ to the infinite line $\ell_{\text{ext}}$  (which is the line that contains the line segment $\ell$) is  at most $r$. In other words, $x$ is in between the two lines $M_1$ and $M_2$. Since $x$ was in the tubular section $S$ it was in between the two lines $N_1$ and $N_2$ and hence $x$ belongs to the trapezoid $R$.
}
\end{proof}

\begin{SCfigure}[1.8][!htbp]
\caption{Illustration of a curve segment $\Gamma_{s}$, straight line segment $\ell$ connecting the end points of $\Gamma_{s}$, the tubular section of radius $S$, and the trapezoid $R$.}
\includegraphics[scale=.3]{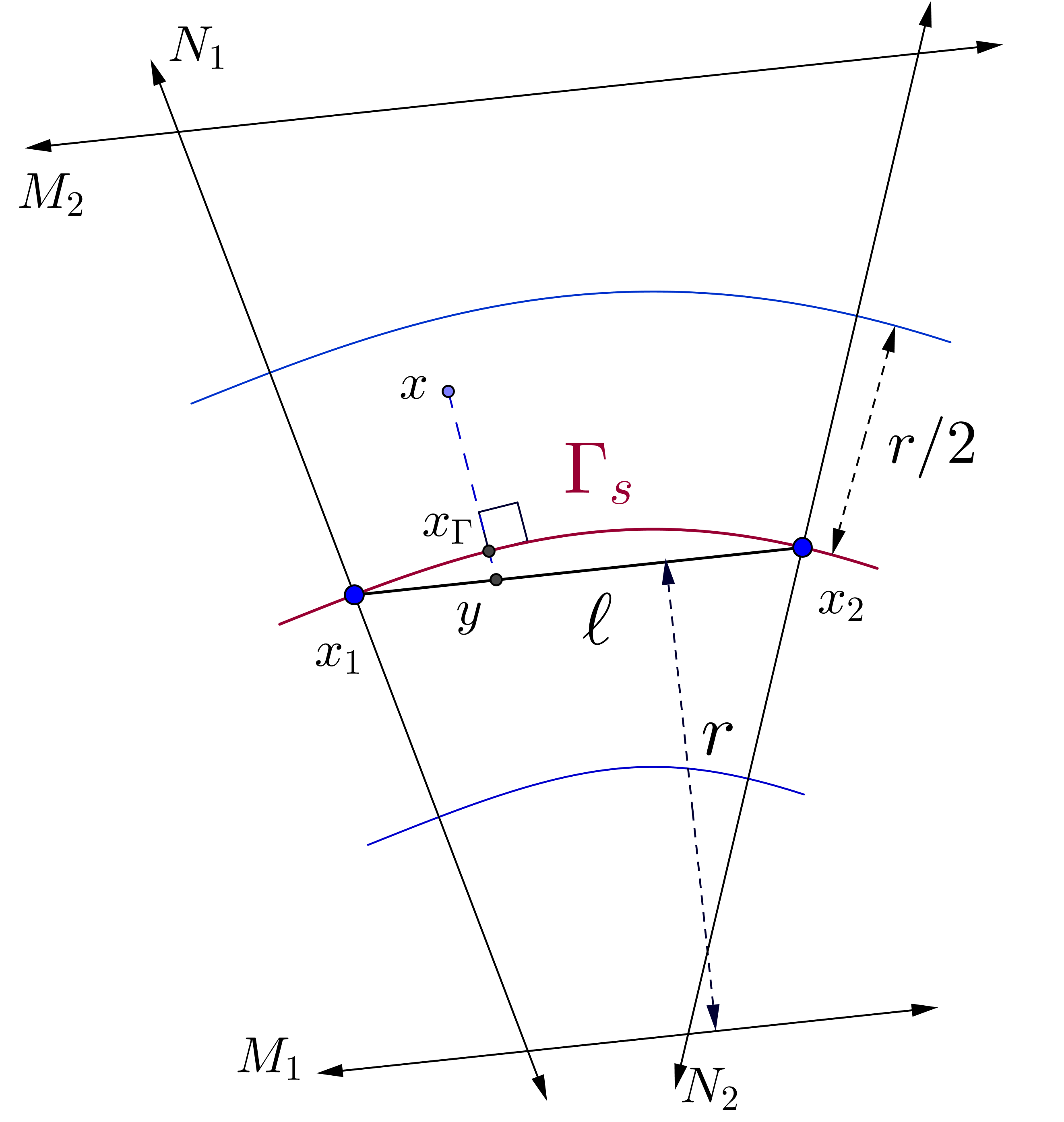}
\label{figuretubular}
\end{SCfigure}
{
\subsection{Proof of Lemma \ref{twotri}.}\label{twotriproof}
Let $e=\mathrm{Int}(\partial T_1\cap \partial T_2)\in \EhG$, with $T_1, T_2\in \Th$, and the previous definitions of $e^\pm = e\cap \Omega^\pm$. We analyze the case in $\Omega^-$. The same analysis is valid in $\Omega^+$. We proceed analyzing two cases depending on the intersection of the edge $e$ with $\Gamma$:
\begin{enumerate}[label=(\roman*)]
\item If $ e\cap\Gamma = \emptyset $, ($e=e^-$). If either $T_1$ or $T_2$ does not belong to $\ThG$ the results is trivial. Consider the case where $T_1$ and $T_2$ belong to $\ThG$, the interface sections $T_{1,\Gamma}$ and $T_{2,\Gamma}$ are nonempty. We consider the midpoint $m_e$ the segment $e$ and the ball $B_{|e|/2}(m_e)$ of radius $|e|/ 2$ and centered at $m_e$. If the interface does not cross the ball  $B_{|e|/ 2}(m_e)$ then we have
\[
|T_1^{-}|\geq  |B_{|e|/ 2}(m_e) \cap T_1|.
\]
Denote by $\underline{\alpha}$  the minimum angle of the triangulation (given by shape regularity), and let $\tilde{\alpha}=\min\{\underline{\alpha}, \pi/4\}$. Now consider the isosceles triangle $\tilde{T}_1$ with base edge $e$ and base angles $\tilde{\alpha}$. Then, clearly $\tilde{T}_1 \subset B_{|e|/ 2}(m_e) \cap T_1$ and $|\tilde{T}_1 |=(|e^-|/2)^2\tan\{\tilde{\alpha}\}$. Therefore,  $|T_1^{-}| \ge (|e^-|/2)^2\tan\{\tilde{\alpha}\}$. Assume then that the interface crosses the ball $B_{|e|/2}(m_e)$ in $T_1$.  Observe that this implies that there exists a point in $T_{1,\Gamma}$ who's normal passes through $m_e$ with  distance less than $|e|/2$. Now, if $T_{2,\Gamma}$ crosses the ball then we will have two points on $\Gamma$ at a distance less than $|e|/2$ whose normal passes through $m_e$ which contradicts the tubular neighborhood assumption. Therefore
\[
\max_{i=\{1,2\}}|T_{i}^{-}|\geq  \Big(\frac{|e|}{2}\Big)^2\tan\{\tilde{\alpha}\}.
\]
\item If $e\cap\Gamma \neq \emptyset$.  Similarly as in the previous case, consider $m_e$ the midpoint of $e^{-}$ and the ball of radius $|e^{-}|/2$ centered at $m_e$. We observe that this ball can not cross both $T_{1,\Gamma}$ and $T_{2,\Gamma}$. Therefore
\[
\max_{i=\{1,2\}}|T_{i}^{-}|\geq  \Big(\frac{|e^-|}{2}\Big)^2\tan\{\tilde{\alpha}\}.
\]
\end{enumerate}
}
{
\subsection{ Proof  of Lemma \ref{compare}.}\label{compareproof}
We analyze the case in $\Omega^-$. We first observe that, by triangle inequality we have
\begin{equation*}
|l_T|\,\,\leq\sum_{i=1}^3 |e^{-}_i|\,\,\leq\,\,3\,\max_{i=\{1,2,3\}} |e^-|.
\end{equation*}
Therefore, using Lemma \ref{TGammalessLT}
 \begin{equation*}
|T_\Gamma|\,\,\leq\,\, 6\max_{i=1,2,3} |e_i^-|.
\end{equation*}
Same analysis is valid to prove the statement in $\Omega^+$.
}

\section{Construction of basis functions of $S^{1}(T)$}\label{AppendixS1T}
{
In this section we define basis functions $\{w_1,w_2,w_3\}$ of the space $S^{1}(T)$, for $T\in\mathcal{T}_h^{\Gamma}$. Consider $\{x_1,x_2,x_3\}$ nodes of the element $T$. Let $\{\lambda_1,\lambda_2,\lambda_3\}$ be the barycentric coordinates of a point $x\in T$ with respect to $\{x_1,x_2,x_3\}$. Consider the following representation
\begin{equation*}
w_i(x) = \left\{
        \begin{array}{ll}
          w_i^-(x), & \hbox{if }x\in T^-; \\
          w_i^+(x), & \hbox{if }x\in T^+.
        \end{array}
      \right. w_i^{\pm}(x) = \sum_{j = 1}^{3} a_{i,j}^{\pm} \lambda_{j}(x).
\end{equation*}
We construct $\{w_1,w_2,w_3\}$, a set of basis functions of the space $S^1(T)$, by satisfying the following conditions: for $i =1,2,3$
\begin{equation*}
\left\{
\begin{array}{ll}
 w_{i}(x_j) = \delta_{i,j}&  \hbox{ for } j=1,2,3; \\
 {[ w_i(x_0) ]}= 0 & x_0\in \TG;\\
 {[D_{\n{t}_0}w_i]} = 0& \n{t}_0 = \n{t}(x_0);\\
 {[D_{\n{n}_0}w_i = 0]}& \n{n}_0 = \n{n}(x_0);
\end{array}
\right.\quad  \hbox{where}\quad  \delta_{i,j} = \left\{
                           \begin{array}{ll}
                             1, & \hbox{if } i=j \\
                             0, & \hbox{if } i\neq j.
                           \end{array}
                         \right.
\end{equation*}
These conditions are written in a system of size $6\times 6$ using the barycentric coordinates representation of $w_i^{\pm}$, i.e., we find the unknowns coefficients $\{a_{i,j}^{\pm}\}_{j=1}^{3}$ of $w_i$ for $i=1,2,3$, solutions of:
{\small
\begin{equation*}
\left(
  \begin{array}{cccccc}
    \tilde{\delta}_{1,+} & 0              & 0              & \tilde{\delta}_{1,-}   & 0              & 0               \\
    0              & \tilde{\delta}_{2,+} & 0              & 0              & \tilde{\delta}_{2,-}   & 0               \\
    0              & 0              & \tilde{\delta}_{3,+} & 0              & 0              & \tilde{\delta}_{3,-}    \\
    \lambda_1(x_0) & \lambda_2(x_0) & \lambda_3(x_0) &     -\lambda_1(x_0) & -\lambda_2(x_0) & -\lambda_3(x_0) \\
    -D_{\n{t}_0} \lambda_1 & -D_{\n{t}_0} \lambda_2 &  -D_{\n{t}_0} \lambda_3& D_{\n{t}_0} \lambda_1 & D_{\n{t}_0} \lambda_2 &D_{\n{t}_0} \lambda_3  \\
-\rho^{+} D_{\n{n}_0} \lambda_1 & -\rho^{+}D_{\n{n}_0} \lambda_2 &  \rho^{+}D_{\n{n}_0} \lambda_3& \rho^{-}D_{\n{n}_0} \lambda_1 & \rho^{-}D_{\n{n}_0} \lambda_2 &\rho^{-}D_{\n{n}_0} \lambda_3  \\
  \end{array}
\right)\left(
         \begin{array}{c}
           a_{i,1}^+ \\
           a_{i,2}^+ \\
           a_{i,3}^+ \\
           a_{i,1}^- \\
           a_{i,2}^- \\
           a_{i,3}^- \\
         \end{array}
       \right) = \left(
         \begin{array}{c}
           \delta_{i,1} \\
           \delta_{i,2} \\
           \delta_{i,3} \\
           0 \\
           0 \\
           0\\
         \end{array}
       \right)
\end{equation*}
}
where
\begin{equation*}
\tilde{\delta}_{j,\pm} = \left\{
                           \begin{array}{ll}
                             1, & \hbox{if } x_{j}\in T^{\pm} \\
                             0, & \hbox{if } x_{j}\in T^{\mp}.
                           \end{array}
                         \right.
\end{equation*}
}

\bibliography{paper}{}

\begin{thebibliography}{10}

\bibitem{MR3218337}
Slimane Adjerid, Mohamed Ben-Romdhane, and Tao Lin.
\newblock Higher degree immersed finite element methods for second-order
  elliptic interface problems.
\newblock {\em Int. J. Numer. Anal. Model.}, 11(3):541--566, 2014.

\bibitem{MR2981355}
Nelly Barrau, Roland Becker, Eric Dubach, and Robert Luce.
\newblock A robust variant of {NXFEM} for the interface problem.
\newblock {\em C. R. Math. Acad. Sci. Paris}, 350(15-16):789--792, 2012.

\bibitem{MR2571349}
Roland Becker, Erik Burman, and Peter Hansbo.
\newblock A {N}itsche extended finite element method for incompressible
  elasticity with discontinuous modulus of elasticity.
\newblock {\em Comput. Methods Appl. Mech. Engrg.}, 198(41-44):3352--3360,
  2009.

\bibitem{MR1278258}
Susanne~C. Brenner and L.~Ridgway Scott.
\newblock {\em The mathematical theory of finite element methods}, volume~15 of
  {\em Texts in Applied Mathematics}.
\newblock Springer-Verlag, New York, 1994.

\bibitem{MR2738930}
Erik Burman.
\newblock Ghost penalty.
\newblock {\em C. R. Math. Acad. Sci. Paris}, 348(21-22):1217--1220, 2010.

\bibitem{MR3264337}
Erik Burman and Peter Hansbo.
\newblock Fictitious domain methods using cut elements: {III}. {A} stabilized
  {N}itsche method for {S}tokes' problem.
\newblock {\em ESAIM Math. Model. Numer. Anal.}, 48(3):859--874, 2014.

\bibitem{MR3051411}
Erik Burman and Paolo Zunino.
\newblock Numerical approximation of large contrast problems with the unfitted
  {N}itsche method.
\newblock In {\em Frontiers in numerical analysis---{D}urham 2010}, volume~85
  of {\em Lect. Notes Comput. Sci. Eng.}, pages 227--282. Springer, Heidelberg,
  2012.

\bibitem{MR2684351}
C.-C. Chu, I.~G. Graham, and T.-Y. Hou.
\newblock A new multiscale finite element method for high-contrast elliptic
  interface problems.
\newblock {\em Math. Comp.}, 79(272):1915--1955, 2010.

\bibitem{MR0394451}
Manfredo~P. do~Carmo.
\newblock {\em Differential geometry of curves and surfaces}.
\newblock Prentice-Hall, Inc., Englewood Cliffs, N.J., 1976.
\newblock Translated from the Portuguese.

\bibitem{MR1273155}
Maksymilian Dryja and Olof~B. Widlund.
\newblock Domain decomposition algorithms with small overlap.
\newblock {\em SIAM J. Sci. Comput.}, 15(3):604--620, 1994.
\newblock Iterative methods in numerical linear algebra (Copper Mountain
  Resort, CO, 1992).

\bibitem{MR737190}
David Gilbarg and Neil~S. Trudinger.
\newblock {\em Elliptic partial differential equations of second order}, volume
  224 of {\em Grundlehren der Mathematischen Wissenschaften [Fundamental
  Principles of Mathematical Sciences]}.
\newblock Springer-Verlag, Berlin, second edition, 1983.

\bibitem{MR2377272}
Yan Gong, Bo~Li, and Zhilin Li.
\newblock Immersed-interface finite-element methods for elliptic interface
  problems with nonhomogeneous jump conditions.
\newblock {\em SIAM J. Numer. Anal.}, 46(1):472--495, 2007/08.

\bibitem{MR2681555}
Yan Gong and Zhilin Li.
\newblock Immersed interface finite element methods for elasticity interface
  problems with non-homogeneous jump conditions.
\newblock {\em Numer. Math. Theory Methods Appl.}, 3(1):23--39, 2010.

\bibitem{MR775683}
P.~Grisvard.
\newblock {\em Elliptic problems in nonsmooth domains}, volume~24 of {\em
  Monographs and Studies in Mathematics}.
\newblock Pitman (Advanced Publishing Program), Boston, MA, 1985.

\bibitem{MR1941489}
Anita Hansbo and Peter Hansbo.
\newblock An unfitted finite element method, based on {N}itsche's method, for
  elliptic interface problems.
\newblock {\em Comput. Methods Appl. Mech. Engrg.}, 191(47-48):5537--5552,
  2002.

\bibitem{MR2192480}
Peter Hansbo.
\newblock Nitsche's method for interface problems in computational mechanics.
\newblock {\em GAMM-Mitt.}, 28(2):183--206, 2005.

\bibitem{MR2740492}
Xiaoming He, Tao Lin, and Yanping Lin.
\newblock Immersed finite element methods for elliptic interface problems with
  non-homogeneous jump conditions.
\newblock {\em Int. J. Numer. Anal. Model.}, 8(2):284--301, 2011.

\bibitem{MR2864671}
Xiaoming He, Tao Lin, and Yanping Lin.
\newblock The convergence of the bilinear and linear immersed finite element
  solutions to interface problems.
\newblock {\em Numer. Methods Partial Differential Equations}, 28(1):312--330,
  2012.

\bibitem{MR1929889}
Jianguo Huang and Jun Zou.
\newblock Some new a priori estimates for second-order elliptic and parabolic
  interface problems.
\newblock {\em J. Differential Equations}, 184(2):570--586, 2002.

\bibitem{MR2018791}
Zhilin Li, Tao Lin, and Xiaohui Wu.
\newblock New {C}artesian grid methods for interface problems using the finite
  element formulation.
\newblock {\em Numer. Math.}, 96(1):61--98, 2003.

\bibitem{MR3338673}
Tao Lin, Yanping Lin, and Xu~Zhang.
\newblock Partially penalized immersed finite element methods for elliptic
  interface problems.
\newblock {\em SIAM J. Numer. Anal.}, 53(2):1121--1144, 2015.

\bibitem{MR3268662}
Andr{{\'e}} Massing, Mats~G. Larson, Anders Logg, and Marie~E. Rognes.
\newblock A stabilized {N}itsche fictitious domain method for the {S}tokes
  problem.
\newblock {\em J. Sci. Comput.}, 61(3):604--628, 2014.

\bibitem{MR3248049}
Andr{{\'e}} Massing, Mats~G. Larson, Anders Logg, and Marie~E. Rognes.
\newblock A stabilized {N}itsche overlapping mesh method for the {S}tokes
  problem.
\newblock {\em Numer. Math.}, 128(1):73--101, 2014.

\bibitem{MR3047906}
Paolo Zunino.
\newblock Analysis of backward {E}uler/extended finite element discretization
  of parabolic problems with moving interfaces.
\newblock {\em Comput. Methods Appl. Mech. Engrg.}, 258:152--165, 2013.

\bibitem{MR2820966}
Paolo Zunino, Laura Cattaneo, and Claudia~Maria Colciago.
\newblock An unfitted interface penalty method for the numerical approximation
  of contrast problems.
\newblock {\em Appl. Numer. Math.}, 61(10):1059--1076, 2011.

\end{thebibliography}
\bibliographystyle{plain}

\vskip20mm
\end{document}